 %%%%%%%%%%%%%%%%%%%%%%%%%%%%%%%%%%%%%%%%%%%%%%%%%%%%%%%%%%%%%%%%%%%%%%%%
%\documentclass[10pt]{amsart}
%\documentclass[reqno,11pt]{amsart}
%\documentclass[a4paper,11pt]{amsart}
%
\documentclass{amsart}[12pt]
\usepackage{mathrsfs}
\usepackage{amssymb}
\usepackage{amsfonts}
\usepackage{amsmath,color, esvect,autobreak}
\usepackage[colorlinks,linkcolor=blue,urlcolor=cyan,citecolor=blue]{hyperref}

\usepackage{genyoungtabtikz}

\newcommand\reduline{\bgroup\markoverwith
      {\textcolor{red}{\rule[0.5ex]{2pt}{1pt}}}\ULon}

%\usepackage{times}

%%%%%%% Download macros %%%%%%%%%%%%%%%%%%%%%%%%%%%%%%%%%%%%%%%%%%%%%%%%%%%%
%
%%%%%%%%%%%%%%%%%%%%%%%%%%%%%%%%%%%%%%%%%%%%%%%%%%%%%%%%%%%%%
%
% Definitions
%
%%%%%%%%%%%%%%%%%%%%%%%%%%%%%%%%%%%%%%%%%%%%%%%%%%%%%%%%%%%%%
\usepackage{comment, amsmath, amssymb, amsgen, amsthm, amscd, xspace, color, epsfig, float, epic}
%\usepackage[hypertex]{hyperref}

%\makeatletter
%\usepackage{genyoungtabtikz}

\newcommand{\Rmnum}[1]{\expandafter\@slowromancap\romannumeral #1@}

\newcommand{\lam}{\lambda}

\newcommand{\hdom}[3]{\draw (0+#1,0-#2) rectangle (2+#1,-1-#2)++(-1,+0.5) node {$ #3$};}
\newcommand{\vdom}[3]{\draw (0+#1,0-#2) rectangle (1+#1,-2-#2)++(-0.5,+1) node {$ #3$};}
\newcommand{\hobox}[3]{\draw (0+#1,0-#2) rectangle (1+#1,-1-#2)++(-0.5,+0.5) node {$ #3$};}

\newcommand{\domscale}{0.51}

\newcommand{\ev}{^{\mathrm{ev}}}
\newcommand{\od}{^{\mathrm{odd}}}

%
%%%%%%%%%% Abbreviations %%%%%%%%%%%%%%%%%%%%%%%%%%%%%%%%%%%%%%%%%%%%%
%
%

%\usepackage{fullpage}
%\usepackage[textwidth=0.85in, textsize=tiny]{todonotes}
%\setlength{\marginparwidth}{2cm}
%\setlength{\footskip}{30pt}

%
%%%%%%%%% Lie algebras, etc %%%%%%%%%%%%%%%%%%%%%%%%%%%%%%%%%%%%%%%%%%
%

%

%

%

%

%

%\newcommand{\dim}{\operatorname{dim}}

%\newcommand{\ad}{\mathrm{ad}}

\newcommand{\GK}{\mathrm{GKdim}}

\newcommand{\pf}{\begin{proof}}
\newcommand{\epf}{\end{proof}}
\newcommand{\eq}{\begin{equation}}
\newcommand{\eeq}{\end{equation}}
\newcommand{\eqn}{\begin{equation*}}
\newcommand{\eeqn}{\end{equation*}}

\newcommand{\frb}{\mathfrak{b}}

\newcommand{\frg}{\mathfrak{g}}
\newcommand{\frh}{\mathfrak{h}}

\newcommand{\frl}{\mathfrak{l}}

\newcommand{\frn}{\mathfrak{n}}

\newcommand{\frp}{\mathfrak{p}}

\newcommand{\fru}{\mathfrak{u}}

\allowdisplaybreaks
%
%%%%%% Theorems, equations, lemmas \cdots %%%%%%%%%%%%%%%%%%%%%%%%%%%%%%%%%%%%%
%
%\theoremstyle{}
\newtheorem{theorem}[equation]{Theorem}%[section]
\newtheorem{corollary}[equation]{Corollary}
\newtheorem{proposition}[equation]{Proposition}
\newtheorem{lemma}[equation]{Lemma}

\newtheorem{remark}[equation]{Remark}
\theoremstyle{definition}
\newtheorem{definition}[equation]{Definition}

\newtheorem{example}[equation]{Example}

%
%%%%%%%%%  Headings, pagestyle, indents %%%%%%%%%%%%%%%%%%%%%%%%%%%%%%%%%%%%

%\renewcommand{\baselinestretch}{1.2}
%\pagestyle{plain}
%\pagestyle{myheadings} \markboth{}{\today}
%\setlength{\parindent}{0pt}

%%%%%%%%%%  Section numbering, etc %%%%%%%%%%%%%%%%%%%%%%%%%%%%%%%%%%%%%
%
%
\numberwithin{equation}{section}
%\setcounter{secnumdepth}{2}
%\renewcommand{\theequation}{\thesection.\arabic{equation}}
%\renewcommand{\labelmarkerii}{\alph{markerii}.}
%\numberwithin{thm}{section}
%
%\allowdisplaybreaks[1]

%%%%%%%%%%%%%%%%%%%%%%%%%%%%%%%%%%%%%%%%%%%%%%%%%%%%%%%%%%
%% My own young tableau setups

\usepackage[centertableaux]{ytableau}
\colorlet{srcol}{black!15}

% Ytableau tweak
\makeatletter
\pgfkeys{/ytableau/options,
  noframe/.default = false,
  noframe/.is choice,
  noframe/true/.code = {%
    \global\let\vrule@YT=\vrule@none@YT
    \global\let\hrule@YT=\hrule@none@YT
  },
  noframe/false/.code = {%
    \global\let\vrule@YT=\vrule@normal@YT
    \global\let\hrule@YT=\hrule@normal@YT
  },
  noframe/on/.style = {noframe/true},
  noframe/off/.style = {noframe/false},
}
\makeatother
%\ytableausetup{noframe=on,smalltableaux}
\ytableausetup{noframe=off,boxsize=1.3em}
\let\ytb=\ytableaushort

\newcommand{\tytb}[1]{{\tiny\ytb{#1}}}

\makeatletter

%%%%%%%%%%%%%%%%%%%%%%%%%%%%%%%%%%%%%%%%%%%%%%%%%%%%%%%%%%

%
%%%%%%%%%%%%%%%%%%%%%%%%%%%%%%%%%%%%%%%%%%%%%%%%%%%%%%%%%%%%%%%%%%%%
%
\begin{document}

\title[ socular highest weight modules and Richardson orbits]{A characterization of socular  highest weight modules and Richardson orbits of classical types}

\author{Zhanqiang Bai}
\address[Bai]{School of Mathematical Sciences, Soochow University, Suzhou 215006, China}
\email{zqbai@suda.edu.cn}

\author{Shaoyi Zhang}
\address[Zhang]{School of Mathematical Sciences, Soochow University, Suzhou 215006, China}
\email{shaoyizhangedu@163.com}

%\thanks{The first author is supported by NSFC Grant No. 12171344 and National Key R$\&$D Program of China (No. 2018YFA0701700 and No. 2018YFA0701701).}
%\thanks{*Corresponding author}
\subjclass[2010]{17B10, 17B08}

\keywords{highest weight module, Gelfand-Kirillov dimension, Young tableau, parabolic category, Richardson orbit}

%\keywords{leading weight vector; generalized Verma
%module; BGG category $\caO$; invariant differential operator; homogeneous vector bundle}

\bigskip

\begin{abstract}
Let $\mathfrak{g}$ be a simple complex Lie algebra of classical type with a Cartan subalgebra $\mathfrak{h}$. We fix a standard parabolic subalgebra $\mathfrak{p}\supset \mathfrak{h}$. The socular simple modules are just those highest weight modules with the largest possible Gelfand-Kirillov dimension in the corresponding parabolic category $\mathcal{O}^{\mathfrak{p}}$.
In this article, we will give an explicit characterization for these modules. When the module is integral, our characterization is given by the information of the corresponding Young tableau associated to the given highest weight module. When the module is nonintegral, we still have some characterization by using the results in the integral case. In our characterization, we define a particular Young diagram called Z-diagram. From this diagram, we can describe the partition type of the unique Richardson orbit associated to the given parabolic subalgebra $\mathfrak{p}$.
\end{abstract}

%
%\begin{abstract}
%Let $\mathfrak{g}$ be a Hermitian type Lie algebra and $\mathfrak{p}$ be a maximal parabolic subalgebra.  In this article, we give some criteria to
%determine the reducibility of generalized Verma modules(i.e., $U(\mathfrak{g})\otimes_{U(\mathfrak{p})}F$, where $F$ is a finite-dimensional irreducible
%$\mathfrak{p}$-module) in the Hermitian symmetric case. Our criteria are given in the information of the highest weight of $F$.
%\end{abstract}
%\subjclass[2000]{Primary 22E46}

\maketitle

%
%%%%%%%%%%%%%%%%%%%%%%%%%%%%%%%%%%%%%%%%%%%%%%%%%%%%%%%%%%%%%%%%%%%%

%\tableofcontents

%
\section{Introduction}
%

%%%%%%%%%%%%%%%%%%%%%%%%%%%%%%%%%%%%%%%%%%%%%%%%%%%%%%%%%%%%%%%%%%%%
Let $\mathfrak{g}$ be a finite-dimensional complex semisimple Lie
algebra.
Fix a Cartan subalgebra $\mathfrak{h}$ and denote by $\Phi$ the root system associated with $(\frg, \frh)$. Choose a positive root system
$\Phi^+\subset\Phi$ and a simple system $\Delta=\{\alpha_i~ |~ 1\leq i\leq n\}\subset\Phi^+$. Let $\rho$ be the half sum of roots in $\Phi^+$. Let
$\mathfrak{g}=\bar\frn\oplus\mathfrak{h}\oplus
\mathfrak{n}$ be the Cartan decomposition of $\frg$ with nilpotent radical $\frn$ and its dual space $\bar\frn$.  Moreover,
$\frb=\frh\oplus\frn$ is the Borel subalgebra corresponding to $\Phi^+$. Choose a subset $I\subset \Delta$. Then it generates a subsystem
$\Phi_I\subset\Phi$.
Let $\frp_I$ be the standard parabolic subalgebra corresponding to $I$ with Levi decomposition $\frp_I=\frl_I\oplus\fru_I$. We frequently drop the
subscript $I$ if there is no confusion.

Let $F(\lambda)$ be a finite-dimensional irreducible $\mathfrak{l}$-module with highest weight $\lambda-\rho \in\frh^*$. It can also be viewed as a
$\mathfrak{p}$-module with a trivial $\mathfrak{u}$-action. The {\it generalized Verma module} $M_I(\lambda)$ is defined by
\[
M_I(\lambda):=U(\frg)\otimes_{U(\frp)}F(\lambda).
\]
The irreducible quotient of $M_I(\lambda)$ is denoted by $L(\lambda)$, which is a highest weight module of highest weight $\lambda-\rho$. We use $\mathcal{O}^{\mathfrak{p}}$ to denote the corresponding parabolic category.

In \cite{I}, Irving called $L(\mu)$ or $\mu$ {\it socular} if $L(\mu)$ is a summand of the socle of a generalized Verma module $M_I(\lambda)$ in the category $\mathcal{O}^{\mathfrak{p}}$. In \cite[A. 2]{I}, Irving showed that these socular highest weight modules $L(\mu)$ are just those modules with the largest possible Gelfand-Kirillov dimension $\dim(\mathfrak{u})$ in $\mathcal{O}^{\mathfrak{p}}$. These modules play an important role in the study of parabolic category $\mathcal{O}^{\mathfrak{p}}$. Many mathematicians use these modules to study the properties of  category $\mathcal{O}$ or other types of algebras, for example, see Refs. \cite{Br,BK,BLW,MC, MCb,X}. Another motivation for us to study these modules is that a scalar generalized Verma module $M_I(\lambda)$ (here `scalar' means that $\dim F(\lambda)=1$) is reducible if and only if the Gelfand-Kirillov dimension of its irreducible quotient $L(\lambda)$ is strictly less than $\dim(\fru_I)$ (see \cite{BXiao}). From \cite{BXX},  we can write $\lambda=w\mu$ for a unique anti-dominant $\mu\in \mathfrak{h}^*$ and a unique minimal length element $w\in W_{[\lambda]}$, where $W_{[\lambda]}$ is the integral Weyl group of $\lambda$.  It is known that a highest weight module $L(\lambda)$ in $\mathcal{O}^{\mathfrak{p}}$ is socular  if and only if $w$ belongs to  the Kazhdan-Lusztig right cell containing $w_0^{\mathfrak{p}}$, where $w_0^{\mathfrak{p}}$
 is the longest element in the parabolic subgroup
of $W$ corresponding to the parabolic subalgebra $\mathfrak{p}$, see \cite[Theorem 48]{MCb}. But outside of type $A$ there is no nice combinatorial description of KL right cells
(like via the Robinson-Schensted algorithm in type $A$).
 Garfinkle \cite{Gaa,Gab,Gac} used domino tableaux to describe KL right cells, which are not easy  to use.

Recently, Bai-Xie \cite{BXie} and Bai-Xiao-Xie \cite{BXX} generalized the  Robinson-Schensted algorithm and found some practical combinatorial algorithms to compute the Gelfand-Kirillov
dimension of any simple highest weight module when $\mathfrak{g}$ is a classical Lie algebra. In this article, we will use their algorithms to  give an explicit characterization for these socular highest weight modules.

Now we let $\mathfrak{g}=\mathfrak{so}(2n+1,\mathbb{C})$. We choose
$\Phi^+=\{e_i-e_j|1\leq i<j\leq n\}\cup \{e_i|1\leq i\leq n\}$ and a simple system $\Delta=\{\alpha_i:=e_i-e_{i+1}|1\leq i\leq n-1\}\cup \{\alpha_n:=e_n\}$. We choose a subset $I\subset \Delta$.  There will exist some positive integers ${n_1,n_2,\cdots,n_{k-1}}$ with $n_1+n_2+\cdots+n_{k-1}\leq n$ such that $\Delta\setminus I=\{\alpha_{p_1},\alpha_{p_2},\cdots,\alpha_{p_{k-1}}\}$, where   $p_t=\sum_{i=1}^{t}n_i$.
This subset $I$ will  generate a subsystem
$\Phi_I\subset\Phi$. Let $\frp_I$ be the standard parabolic subalgebra corresponding to $I$ with Levi decomposition $\frp_I=\frl_I\oplus\fru_I$. 
	We call  $\frp_I$ a {\it standard  parabolic subalgebra of type $(n_1, n_2,\cdots,n_k)$} with $n_k= n-\sum_{i=1}^{k-1} n_i$. Note that we may have $n_k=0$.
We can similarly define standard  parabolic subalgebras for $\mathfrak{sp}(n,\mathbb{C})$. 

For $\mathfrak{g}=\mathfrak{so}(2n,\mathbb{C})$. We choose
$\Phi^+=\{e_i\pm e_j|1\leq i<j\leq n\}$ and a simple system $\Delta=\{\alpha_i:=e_i-e_{i+1}|1\leq i\leq n-1\}\cup \{\alpha_n:=e_{n-1}+e_n\}$. Similarly we can get a  standard  parabolic subalgebra of type $(n_1, n_2,\cdots,n_k)$. 
When $n_k=1$, we can regard $\mathfrak{p}_I$ as a standard parabolic subalgebra of type $(n_1, n_2,\cdots,n_{k-2},n_{k-1}+1,0)$. These two parabolic subalgebras are isomorphic. For example, when $\Delta\setminus I=\{\alpha_{n-1}\}$, $\mathfrak{p}_I$ will be a standard parabolic subalgebra of type $(n-1,1)$. When $\Delta\setminus I=\{\alpha_{n}\}$, $\mathfrak{p}_I$ will be a standard parabolic subalgebra of type $(n,0)$. These two standard parabolic subalgebras are isomorphic.
From now on, we only consider the cases of $n_k\neq 1$ for $\mathfrak{g}=\mathfrak{so}(2n,\mathbb{C})$ in this article.

For an integral weight $\lambda\in \mathfrak{h}^*$, we write
$\lambda=(\lambda_1,\cdots,\lambda_n)$ and $$\lambda^-=(\lambda_1,\cdots,\lambda_n,-\lambda_n,\\
\cdots,-\lambda_1).$$ By using the famous R-S algorithm in \cite{BXie}, from $\lam$ we can get a Young tableau $P(\lambda)$, see \S \ref{RS}. We use $p(\lambda)=(p_1,\cdots,p_N)$ to denote its shape, which is a partition of $n$. We say $q=(q_1, \cdots, q_N)$ is the shape of the dual tableau of a Young tableau $P$ with shape $p=(p_1, \cdots, p_N)$ and write $q=p^t$ if $q_i$ is the length of $i$-th column of the Young tableau $P$. 

We recall the definition of Hollow tableaux in \cite{BXX}.
\begin{definition}
For a Young tableau $P$ with shape $p$, use $ (k,l) $ to denote the box in the $ k $-th row and the $ l $-th column.
We say the box $ (k,l) $ is \textit{even} (resp. \textit{odd}) if $ k+l $ is even (resp. odd). Let $ p_i \ev$ (resp. $ p_i\od $) be the number of even (resp. odd) boxes in the $ i $-th row of the Young tableau $ P $. 
One can easily check that \[	p_i\ev=\begin{cases}
		\left\lceil \frac{p_i}{2} \right\rceil&\text{ if } i \text{ is odd},\\
		\left\lfloor \frac{p_i}{2} \right\rfloor&\text{ if } i \text{ is even},
	\end{cases}
	\quad p_i\od=\begin{cases}
		\left\lfloor \frac{p_i}{2} \right\rfloor&\text{ if } i \text{ is odd},\\
		\left\lceil \frac{p_i}{2} \right\rceil&\text{ if } i \text{ is even}.
	\end{cases}
\]
Here for $ a\in \mathbb{R} $, $ \lfloor a \rfloor $ is the largest integer $ n $ such that $ n\leq a $, and $ \lceil a \rceil$ is the smallest integer $n$ such that $ n\geq a $. For convenience, we set
\begin{equation*}
	p\ev=(p_1\ev,p_2\ev,\cdots)\quad\mbox{and}\quad p\od=(p_1\od,p_2\od,\cdots).
\end{equation*}
\end{definition}

\begin{example}
	Let $ p=(5,5,4,3,3) $ be the shape of a Young diagram $P$. The even and odd boxes in $P$ are marked as follows:
	\[
		\tytb{EOEOE,OEOEO,EOEO,OEO,EOE}.
		\]
 
	Then $ p\ev=(3,2,2,1,2)$ and $ p\od=(2,3,2,2,1) $.
\end{example}

Now we can give our characterization of socular highest weight modules.

\begin{theorem} \label{mainA}
Let $\mathfrak{g}=\mathfrak{sl}(n,\mathbb{C})$. Suppose $\mathfrak{p}$ is a standard parabolic subalgebra of type $(n_1, n_2,\cdots,n_k)$. A simple integral highest weight module $L(\lambda)$  in $\mathcal{O}^{\mathfrak{p}}$  is socular  if and only if 
$p(\lambda)^t=(m_1,\cdots,m_k)$, where $(m_1,\cdots,m_k)$ is the  arrangement of the sequence $(n_1,n_2,..,n_{k})$ in descending order.
\end{theorem}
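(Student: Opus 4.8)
The plan is to reduce the statement first to a comparison of Gelfand--Kirillov dimensions and then to an elementary inequality about the partition $p(\lambda)$. By Irving's result quoted above (\cite[A.2]{I}), the largest value taken by $\GK$ on $\mathcal{O}^{\mathfrak{p}}$ is $\dim(\fru)$, and $L(\lambda)$ is socular precisely when $\GK L(\lambda)=\dim(\fru)$. Since $\mathfrak{p}$ is of type $(n_1,\dots,n_k)$ we have $\dim(\fru)=\sum_{i<j}n_in_j=\tfrac12\big(n^2-\sum_i n_i^2\big)$, so that, using $\sum_i n_i=n$,
\[
\dim(\frn)-\dim(\fru)=\frac{n(n-1)}{2}-\frac{n^2-\sum_i n_i^2}{2}=\sum_i\binom{n_i}{2}.
\]
I would then invoke the Gelfand--Kirillov dimension formula of Bai--Xie \cite{BXie}, which in type $A$ reads $\GK L(\lambda)=\dim(\frn)-\sum_{i}\binom{q_i}{2}$ with $q=p(\lambda)^t$ the shape of the dual tableau of $P(\lambda)$. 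Comparing the two expressions, ``$L(\lambda)$ socular'' becomes equivalent to the purely combinatorial identity
\[
\sum_i\binom{q_i}{2}=\sum_i\binom{n_i}{2},\qquad q=p(\lambda)^t.
\]

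The core of the argument is then the dominance inequality $(m_1,\dots,m_k)\trianglelefteq p(\lambda)^t$, valid for every $\lambda$ with $L(\lambda)\in\mathcal{O}^{\mathfrak{p}}$, together with a sharp analysis of its equality case. Indeed, $L(\lambda)\in\mathcal{O}^{\mathfrak{p}}$ forces $F(\lambda)$ to be finite dimensional, which in the coordinates $\lambda=(\lambda_1,\dots,\lambda_n)$ means $\lambda_j-\lambda_{j+1}\in\mathbb{Z}_{>0}$ whenever $j$ and $j+1$ lie in the same block; so within each of the $k$ consecutive blocks of lengths $n_1,\dots,n_k$ the sequence $\lambda$ is strictly decreasing, i.e.\ each block is a strictly decreasing subword of $(\lambda_1,\dots,\lambda_n)$. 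By Greene's theorem, applied to the Robinson--Schensted insertion tableau $P(\lambda)$, the partial sum $q_1+\cdots+q_j$ equals the maximal cardinality of a subword that is a union of $j$ strictly decreasing subwords; choosing the $j$ longest blocks gives $q_1+\cdots+q_j\ge m_1+\cdots+m_j$ for all $j$, where $(m_1,\dots,m_k)$ is the decreasing rearrangement of $(n_1,\dots,n_k)$. Hence $(m_1,\dots,m_k)\trianglelefteq p(\lambda)^t$ in dominance order, and since $x\mapsto\binom{x}{2}$ is strictly convex, Karamata's inequality yields $\sum_i\binom{q_i}{2}\ge\sum_i\binom{m_i}{2}=\sum_i\binom{n_i}{2}$, with equality if and only if the two partitions coincide, i.e.\ $p(\lambda)^t=(m_1,\dots,m_k)$. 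Combined with the first paragraph, this proves the theorem.

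I expect the only genuine obstacle to lie in the second step, where one must fix compatibly the normalisations of \cite{BXie} --- in particular check that ``$F(\lambda)$ finite dimensional'' really translates into ``$\lambda$ strictly decreasing within each block'' in the coordinates for which $P(\lambda)$ is computed, and that $P(\lambda)$ is produced by the standard row-insertion algorithm so that Greene's theorem applies verbatim --- and then note that equality in Karamata's inequality forces the whole tuple $q$, not merely its first few partial sums, to equal $(m_1,\dots,m_k)$. A route that avoids Greene's theorem is to use Joseph's identification of the associated variety $V(L(\lambda))$ with the orbit closure $\overline{\mathcal{O}_{p(\lambda)}}$ together with the facts that $V(M_I(\lambda))=\overline{G\cdot\fru}$ is the irreducible closure of the Richardson orbit attached to $\mathfrak{p}$, whose Jordan type is $(m_1,\dots,m_k)^t$: the inclusion $V(L(\lambda))\subseteq V(M_I(\lambda))$ gives $p(\lambda)\trianglelefteq (m_1,\dots,m_k)^t$, hence (transposing) $(m_1,\dots,m_k)\trianglelefteq p(\lambda)^t$, while $\GK L(\lambda)=\dim(\fru)$ forces $\mathcal{O}_{p(\lambda)}$ to be dense in that Richardson closure, i.e.\ $p(\lambda)=(m_1,\dots,m_k)^t$, which is $p(\lambda)^t=(m_1,\dots,m_k)$.
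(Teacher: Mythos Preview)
Your argument is correct, but it is worth noting that the paper does not actually prove this theorem: immediately after the statement the authors write ``Since this result is well-known in the language of KL right cells \cite{MCb}, we will omit its proof in this article.'' Their implicit route is the one they describe in the introduction --- $L(\lambda)$ is socular iff the associated $w\in W_{[\lambda]}$ lies in the right cell of $w_0^{\mathfrak p}$, and in type $A$ right cells are parametrised by the $Q$-symbol under Robinson--Schensted, which recovers the partition condition. Your proof takes a genuinely different and more self-contained path: you combine Irving's characterisation with the explicit type-$A$ Gelfand--Kirillov formula $\GK L(\lambda)=\binom{n}{2}-\sum_i\binom{q_i}{2}$, then use Greene's theorem and the strict convexity of $x\mapsto\binom{x}{2}$ to pin down the equality case. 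This avoids any appeal to Kazhdan--Lusztig theory and stays entirely within the elementary combinatorics of the RS correspondence, which is arguably cleaner and closer in spirit to what the paper does for types $B$, $C$, $D$ in the later sections (where Lemma~\ref{decreasing} plays the role of your Karamata step). Your caveats are well placed: the insertion convention in \S\ref{RS} bumps the leftmost entry \emph{strictly} bigger, so columns record strictly decreasing subsequences and Greene's theorem applies in the form you need; and the check that $L(\lambda)\in\mathcal O^{\mathfrak p}$ forces strict decrease within each block follows from the paper's normalisation $\lambda-\rho$ dominant integral for $\mathfrak l$. The alternative associated-variety argument you sketch is also valid and is essentially how the authors handle the Richardson-orbit side in \S\ref{richar}.
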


Since this result is well-known in the language of KL right cells \cite{MCb}, we will omit its proof  in this article.

\begin{theorem} \label{mainBC}
Let $\mathfrak{g}=\mathfrak{sp}(n,\mathbb{C})$ or $\mathfrak{so}(2n+1,\mathbb{C})$. Suppose $\mathfrak{p}$ is a standard parabolic subalgebra of type $(n_1, n_2,\cdots,n_k)$. A simple integral highest weight module $L(\lambda)$  in $\mathcal{O}^{\mathfrak{p}}$  is socular  if and only if $P(\lambda^{-})$ has the same odd boxes as a Z-diagram of type $(n_k; n_1,\cdots,n_{k-1})$. 

% And we will know its special partition based on H-algoritheorem.
\end{theorem}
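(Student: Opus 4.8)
The plan is to convert socularity into an extremal property of the Gelfand--Kirillov dimension and then resolve the resulting combinatorial optimization with the tableau algorithm of \cite{BXie, BXX}. By Irving's result recalled in the Introduction, $L(\lambda)$ is socular in $\caO^{\mathfrak p}$ precisely when $\GK L(\lambda)$ attains its maximal value $\dim\fru$ on $\caO^{\mathfrak p}$. So it suffices to (a) describe the set of shapes $p(\lambda^-)$ that occur as $\lambda$ ranges over integral weights with $L(\lambda)\in\caO^{\mathfrak p}$; (b) on that set, express $\GK L(\lambda)$ through $p(\lambda^-)$ via the formula of \cite{BXX}; and (c) show the maximum of this quantity equals $\dim\fru$ and is attained exactly when $P(\lambda^-)$ has the same odd boxes as the Z-diagram of type $(n_k;n_1,\dots,n_{k-1})$.

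For (a): the condition $L(\lambda)\in\caO^{\mathfrak p}$ forces $\langle\lambda,\alpha^\vee\rangle\in\bbZ_{>0}$ for $\alpha\in I$, so the coordinates of $\lambda$ are strictly decreasing integers within each of the first $k-1$ Levi blocks and form the strictly decreasing positive part of a dominant weight for the rank-$n_k$ classical factor in the last block. Thus $\lambda^-$ is a shuffle of a controlled collection of strictly decreasing runs whose lengths are governed by $n_1,\dots,n_{k-1}$ and $n_k$. Tracking the bumping in the R--S insertion of \cite{BXie} (Greene's theorem on longest unions of increasing subsequences being the efficient tool), I would show the admissible shapes $p(\lambda^-)$ are precisely those dominating, in dominance order, the transpose of the collapsed partition encoded by the Z-diagram; equivalently, those whose row-wise odd-box counts are componentwise at least those of the Z-diagram. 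One must also check that the Z-diagram profile is itself realized, by exhibiting a suitable dominant integral $\lambda$. This bookkeeping, where the block data of $\mathfrak p$ first enters, is the step I expect to be the main obstacle.

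For (b) and (c): by \cite{BXX} the value $\GK L(\lambda)$ has the form $\dim\frn$ minus a sum of binomials $\binom{\cdot}{2}$ in the dual partition of $P(\lambda^-)$, corrected by even/odd box counts; in particular it depends only on the $B/C$-collapse of $p(\lambda^-)$, hence only on the odd-box profile. Since the relevant cost is Schur-convex in the column lengths, it is minimized over the admissible set of (a) at the dominance-minimal member, i.e. exactly when the odd boxes agree with those of the Z-diagram; and a direct computation — using that, by construction, the Z-diagram records the partition of the unique Richardson orbit $\caO_{\mathfrak p}$ attached to $\mathfrak p$ — identifies that minimum with $\tfrac12\dim\caO_{\mathfrak p}=\dim\fru$. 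Because two admissible shapes with the same odd boxes have the same collapse and hence the same $\GK$, the characterization is naturally phrased in terms of odd boxes rather than the full shape. Finally, $\frg=\mathfrak{sp}(n,\bbC)$ and $\frg=\mathfrak{so}(2n+1,\bbC)$ differ only through minor boundary adjustments of the collapse (the role of the short root and of the part $n_k$), which the passage $\lambda\mapsto\lambda^-$ lets one treat uniformly.
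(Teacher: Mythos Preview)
Your overall strategy coincides with the paper's: translate ``socular'' into ``$F_b(\lambda^-)$ is minimal'' via Irving and \cite{BXX}, then solve the combinatorial minimization. The paper does exactly this, and your idea of using Greene's theorem to obtain a dominance constraint on the column partition of $P(\lambda^-)$ is a clean alternative to the paper's step-by-step insertion argument (Lemma~\ref{UniquenessBC}); indeed, since $\lambda^-$ is the concatenation of the $2k-1$ decreasing runs $str_1^+,\dots,str_{k-1}^+,(str_k^+,str_k^-),str_{k-1}^-,\dots,str_1^-$, Greene yields that the column partition of $P(\lambda^-)$ dominates that of the Z-diagram.

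There is, however, a genuine gap in your minimization step. The function $F_b$ is \emph{not} a symmetric function of the column lengths: one has $q_i\od=\lfloor q_i/2\rfloor$ for $i$ odd and $q_i\od=\lceil q_i/2\rceil$ for $i$ even, so a naive Schur-convexity argument in the $q_i$ does not apply, and in particular the equality case (which is the whole content of the ``only if'' direction) cannot be read off from it. The paper handles this by grouping columns into two-column ``areas'' plus a single column for the $A$-domino stack, proving in Lemma~\ref{min} that within each two-column area the contribution to $F_b$ depends only on the total and is minimized when the two columns are balanced, and then running a constrained minimization across areas (Lemma~\ref{decreasingb}) on the \emph{odd-box counts per area}, not on the raw column lengths. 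That parity-aware regrouping is the missing idea in your sketch; without it you cannot conclude that the minimum of $F_b$ is attained exactly when the odd-box profile matches the Z-diagram.

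Two smaller points. First, your claim that the admissible shapes are ``precisely'' those dominating the Z-diagram overstates what is needed and what is true: only the forward inequality is required, and the paper never claims the converse. Second, your appeal to the Richardson orbit to identify the minimum with $\dim\fru$ is circular in this paper's logic: the identification of the Z-diagram (after collapse) with the Richardson partition is Theorem~\ref{rich-c}/\ref{rich-b}, proved \emph{using} Theorem~\ref{mainBC}. The paper instead computes $F_b$ of the Z-diagram directly (Lemma~\ref{countfbfd}) and matches it with $|\Phi_I^+|$; you should do the same.
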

Here Z-diagram means a Young diagram defined in Definition \ref{zdigram}. The numbers of odd boxes and even boxes in a Z-diagram of type $(n_k; n_1,\cdots,n_{k-1})$ will be given in Corollary \ref{numberev}.

% \begin{example}
%     Let
% \end{example}

Similarly we have the following.
\begin{theorem} \label{mainD}
 Let $\mathfrak{g}=\mathfrak{so}(2n,\mathbb{C})$. $\mathfrak{p}$ is a standard parabolic subalgebra of type $(n_1, n_2,\cdots,n_k)$. When $n_k\neq 1$, a simple integral highest weight module  $L(\lambda)$  in $\mathcal{O}^{\mathfrak{p}}$  is socular  if and only if $P(\lambda^{-})$ has the same even boxes as a Z-diagram of type  $(n_k; n_1,\cdots,n_{k-1})$. When $n_k=1$, a simple integral highest weight module  $L(\lambda)$  in $\mathcal{O}^{\mathfrak{p}}$  is socular  if and only if $P(\lambda^{-})$ has the same even boxes as a Z-diagram of type  $(0; n_1,\cdots,n_{k-2}, n_{k-1}+1)$.
\end{theorem}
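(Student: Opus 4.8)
The plan is to reduce the type $D_n$ statement to the type $B_n/C_n$ statement of Theorem \ref{mainBC} by exploiting the relationship between the root systems and the combinatorial algorithms involved. First I would recall that for $\mathfrak{g}=\mathfrak{so}(2n,\mathbb{C})$ the weight $\lambda^-=(\lambda_1,\dots,\lambda_n,-\lambda_n,\dots,-\lambda_1)$ enters the R--S algorithm of \cite{BXie} in essentially the same way as in type $B$, the only difference being the treatment of the "middle" entries coming from $\pm e_n$; this is exactly the source of the $n_k\neq 1$ hypothesis and of the swap between the roles of even and odd boxes in the statement. So the first step is to make precise, using \S\ref{RS}, how $P(\lambda^-)$ in type $D$ differs from the type $B$ tableau attached to the same $\lambda$: I expect that passing from $B_n$ to $D_n$ transposes the parity bookkeeping, so that "same odd boxes as a Z-diagram" in type $B$ becomes "same even boxes as a Z-diagram" in type $D$. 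This parity flip is what accounts for the appearance of even boxes (rather than odd boxes) in the statement.

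The second step is to handle the GK-dimension/socularity criterion on the Lie-algebra side. By \cite[Theorem 48]{MCb} (or equivalently by Irving's characterization recalled in the introduction), $L(\lambda)$ is socular in $\mathcal{O}^{\mathfrak{p}}$ iff its GK dimension equals $\dim\mathfrak{u}_I$, which depends only on the type $(n_1,\dots,n_k)$ of $\mathfrak{p}$. Using the algorithm of \cite{BXX,BXie} to compute $\GK L(\lambda)$ from $P(\lambda^-)$, the condition $\GK L(\lambda)=\dim\mathfrak{u}_I$ translates into a condition on the shape $p(\lambda^-)$, hence on the number of boxes of each parity. I would then invoke Corollary \ref{numberev}, which pins down the number of even and odd boxes of a Z-diagram of the relevant type, to show that the shape-plus-parity data forced by socularity is precisely matched by "$P(\lambda^-)$ has the same even boxes as a Z-diagram of type $(n_k;n_1,\dots,n_{k-1})$." The Z-diagram is constructed (Definition \ref{zdigram}) precisely so that its partition records the Richardson orbit attached to $\mathfrak{p}$, so this is where the geometric content enters.

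For the case $n_k=1$, the strategy is simply to use the isomorphism, already noted in the introduction, between a standard parabolic of type $(n_1,\dots,n_{k-1},1)$ and one of type $(n_1,\dots,n_{k-2},n_{k-1}+1,0)$ in type $D_n$: both give the same parabolic subalgebra, hence the same category $\mathcal{O}^{\mathfrak{p}}$ and the same socular modules, so the criterion for type $(n_1,\dots,n_{k-2},n_{k-1}+1)$ (which has last part $\neq 1$, unless it again equals $1$, in which case one iterates) applies verbatim and yields the stated Z-diagram of type $(0;n_1,\dots,n_{k-2},n_{k-1}+1)$.

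The main obstacle I anticipate is the first step: carefully tracking how the type $D$ version of the R--S insertion algorithm in \S\ref{RS} acts differently from the type $B$ version on $\lambda^-$, and verifying that the net effect on $P(\lambda^-)$ is exactly the even/odd transposition claimed — in particular checking that no boundary phenomenon at the $n$-th coordinate spoils the correspondence when $n_k\neq 1$, and understanding why $n_k=1$ genuinely must be excluded (rather than merely reinterpreted) before the isomorphism is applied. Once that parity dictionary is established, Theorem \ref{mainD} follows from Theorem \ref{mainBC} together with Corollary \ref{numberev} by a formal argument parallel to the type $B/C$ proof.
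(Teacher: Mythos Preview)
Your proposal rests on a misconception about where the type $D$ versus type $B/C$ difference lives. There is no ``type $D$ version of the R--S insertion'': the tableau $P(\lambda^-)$ is computed by the \emph{same} R--S algorithm on the same sequence $\lambda^-$ regardless of the ambient Lie algebra, so the first step you propose---tracking how the algorithm acts differently---has nothing to track. The parity flip you correctly sense is not in the tableau but in Proposition~\ref{thm2}: for $B_n/C_n$ one has $\GK L(\lambda)=n^2-F_b(\lambda^-)$, while for $D_n$ one has $\GK L(\lambda)=n^2-n-F_d(\lambda^-)$. So socularity in type $D$ is the condition that $F_d(\lambda^-)$, a function of the \emph{even} boxes of $P(\lambda^-)$, attains its minimum $\frac12\sum_{j\neq k}n_j(n_j-1)+n_k^2-n_k$ (Lemma~\ref{du}). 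This is a genuinely different minimization problem from the $F_b$ one, with a different target value, and there is no formal dictionary that lets you read it off from Theorem~\ref{mainBC}.

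Consequently the paper does not deduce Theorem~\ref{mainD} from Theorem~\ref{mainBC}; it reruns the argument. One proves an analogue of Lemma~\ref{decreasingb} for the function $\frac12\sum_{j\neq s}x_j(x_j-1)+x_s^2-x_s$ (Lemma~\ref{decreasingd}), and an analogue of Lemma~\ref{UniquenessBC} showing that $P(\lambda^-)\ev$ is uniquely determined when $F_d(\lambda^-)$ is minimal (Lemma~\ref{UniquenessD}), keeping track of even rather than odd boxes in each area $S_i$. Lemma~\ref{countfbfd} then matches this unique even-box pattern with that of the Z-diagram. Corollary~\ref{numberev} alone cannot do this work: it records the even/odd counts of a Z-diagram but says nothing about uniqueness of the minimizer, which is the heart of the matter. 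Your treatment of the $n_k=1$ case via the parabolic isomorphism is exactly right and is what the paper does.
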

% \begin{example}
%     Let
% \end{example}
The second purpose of this article is to give a characterization for the partition of the Richardson orbit associated to
the parabolic subalgebra $\mathfrak{p}$. Let $G$ be a complex reductive Lie group with Lie algebra $\mathfrak{g}$. Suppose $\mathfrak{p}=\mathfrak{l}\oplus\fru$ is a standard parabolic subalgebra of type $(n_1, n_2,\cdots,n_k)$. The $G$ saturation of $\fru$ is a nilpotent orbit of $\mathfrak{g}$, which meets $\fru$  in an open dense set. Such an orbit
is  called a {\it Richardson orbit}. It is uniquely determined by $\mathfrak{p}$. They  play an
important role in the representation theory of $G$ and  attracted
considerable  attention (e.g., \cite{Ri,He,HR,Pa,Ja, Ba}).

Recall that from a partition ${\bf q}$ of $2n$ or $2n+1$, we can identify it with a Young diagram $Q$. Then after moving some boxes in $Q$, we can get another diagram, which will give us a particular partition corresponding to a nilpotent orbit of type $X$ (here $X=C,D$ or $B$). This algorithm is called the collapse of the partition ${\bf q}$, denoted by ${\bf q}_X$. Some details can be found in Definition \ref{collapse}.
For types $C$ and $D$, we find that from the partition ${\bf q}$ corresponding to the  Z-diagram of type  $(n_k; n_1,\cdots,n_{k-1})$, we can  get the Richardson orbit associated to the given parabolic subalgebra $\mathfrak{p}$ of type  $(n_1,\cdots,n_k)$ after doing the collapse of the partition ${\bf q}$, see Theorems \ref{rich-c} and \ref{rich-d}. For type $B$, we start from a modified partition which is different at one part with the partition ${\bf q}$ corresponding to the  Z-diagram of type  $(n_k; n_1,\cdots,n_{k-1})$. Finally we can also get the Richardson orbit associated to the given parabolic subalgebra $\mathfrak{p}$ of type  $(n_1,\cdots,n_k)$ after doing the $B$-collapse, see Theorem \ref{rich-b}.

The paper is organized as follows: in \S \ref{Pre}, we recall the algorithms of computing Gelfand-Kirillov dimensions of highest weight modules in \cite{BXX,BXie} and the H-algorithm (see \cite{BMW}) from which we can construct a special partition (in the sense of Lusztig \cite{Lus79}) from a domino type partition. In \S \ref{proofbc}, we prove our main theorem for types $B$ and $C$ after we define the Z-diagram for a parabolic subalgebra $\mathfrak{p}$ of type  $(n_1,\cdots,n_k)$. In \S \ref{proofd}, we prove our main theorem for type $D$. In \S \ref{nonintegral}, we give a characterization for the nonintegral socular highest weight modules.
In \S \ref{richar}, we give a characterization for the partition of the Richardson orbit associated to the parabolic subalgebra $\mathfrak{p}$ of type  $(n_1,\cdots,n_k)$.

%
%%%%%%%%%%%%%%%%%%%%%%%%%%%%%%%%%%%%%%%%%%%%%%%%%%%%%%%%%%%%%%%%%%%%
%
\section{Preliminaries}\label{Pre}
%
%%%%%%%%%%%%%%%%%%%%%%%%%%%%%%%%%%%%%%%%%%%%%%%%%%%%%%%%%%%%%%%%%%%%
%
Before we prove our main theorems, we first recall some useful results about Gelfand-Kirillov dimension. The details can be found in \cite{V} and \cite{BXie}.

\subsection{Gelfand-Kirillov dimension}\label{RS}
Let $M$ be a $U(\mathfrak{g})$-module generated by a finite-dimensional subspace $M_0$ (that is, $M$ is finitely generated). Let $U_{n}(\mathfrak{g})$
be
the standard filtration of $U(\mathfrak{g})$. Set $M_n=U_n(\mathfrak{g})\cdot M_0$ and
\(
\text{gr} (M)=\bigoplus\limits_{n=0}^{\infty} \text{gr}_n M,
\)
where $\text{gr}_n M=M_n/{M_{n-1}}$. Thus $\text{gr}(M)$ is a graded module of $\text{gr}(U(\mathfrak{g}))\simeq S(\mathfrak{g})$.

\begin{definition}The \textit{Gelfand-Kirillov dimension} of $M$  is defined by
	\begin{equation*}
		\operatorname{GKdim} M = \varlimsup\limits_{n\rightarrow \infty}\frac{\log\dim( U_n(\mathfrak{g})M_{0} )}{\log n}.
	\end{equation*}
\end{definition}

	It is easy to see that the above definition is independent of the choice of $M_0$.

Now  let  $\mathfrak{g}$ be a complex simple  Lie algebra.   We choose a subset $I\subset \Delta$. 
This set $I$ will  generate a subsystem
$\Phi_I\subset\Phi$. Let $\frp_I$ be the standard parabolic subalgebra corresponding to $I$ with the Levi decomposition $\frp_I=\frl_I\oplus\fru_I$.
Let $F(\lambda)$ be a finite-dimensional irreducible $\frl_I$-module with highest weight $\lambda-\rho \in\frh^*$.
The generalized Verma modules $M_I(\lambda)
=U(\frg)\otimes_{U(\frp_I)}F(\lambda)$ has maximal possible Gelfand-Kirillov dimension in $\mathcal{O}^{\mathfrak{p}_I}$ (we will omit $I$ if there is no confusion).
That is, $\GK(M_I(\lambda))=\dim (\mathfrak{u}_I)$.

\begin{lemma}Let  $\mathfrak{g}=\mathfrak{so}(2n+1,\mathbb{C}) $ or $\mathfrak{sp}(n,\mathbb{C})$.  Suppose $\mathfrak{p}$ is a standard parabolic subalgebra of type $(n_1, n_2,\cdots,n_k)$.
Then we have 
 $$|\Phi^+_I|=\frac{1}{2}\sum\limits_{j=1 }^{k-1}n_j(n_j-1)+n_k^2.$$

\end{lemma}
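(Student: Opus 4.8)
The plan is to decompose the root subsystem $\Phi_I$ into its irreducible components and add up the numbers of positive roots of each, using the classical counts.

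First I would make the components of $I$ explicit. Recall that $\Delta\setminus I=\{\alpha_{p_1},\cdots,\alpha_{p_{k-1}}\}$ with $p_t=\sum_{i=1}^{t}n_i$; setting $p_0=0$, the sub-Dynkin-diagram on $I$ obtained from the type $B_n$ (resp.\ $C_n$) diagram by deleting the nodes $\alpha_{p_1},\cdots,\alpha_{p_{k-1}}$ breaks into the connected pieces
\[
I_j:=\{\alpha_{p_{j-1}+1},\cdots,\alpha_{p_j-1}\}\quad(1\leq j\leq k-1),\qquad
I_k:=\{\alpha_{p_{k-1}+1},\cdots,\alpha_{n-1},\alpha_n\}.
\]
For $1\leq j\leq k-1$ the piece $I_j$ consists of $n_j-1$ consecutive roots $e_i-e_{i+1}$, hence spans a root system of type $A_{n_j-1}$ (empty when $n_j=1$). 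The last piece $I_k$ contains the distinguished simple root $\alpha_n$ (equal to $e_n$ in type $B$, to $2e_n$ in type $C$) together with $\alpha_{p_{k-1}+1},\cdots,\alpha_{n-1}$, so it spans a root system of type $B_{n_k}$ (resp.\ $C_{n_k}$) where $n_k=n-p_{k-1}$ (empty when $n_k=0$).

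Next, since the $I_j$ span pairwise orthogonal subspaces of $\mathfrak{h}^*$, the subsystem they generate decomposes as $\Phi_I=\bigsqcup_{j=1}^{k}\Phi_{I_j}$, and intersecting with $\Phi^+$ gives $\Phi^+_I=\bigsqcup_{j=1}^{k}\Phi^+_{I_j}$; in particular $|\Phi^+_I|=\sum_{j=1}^{k}|\Phi^+_{I_j}|$. Now I would plug in the standard values $|\Phi^+(A_{m-1})|=\tfrac12\,m(m-1)$ and $|\Phi^+(B_m)|=|\Phi^+(C_m)|=m^2$ (which also hold trivially for $m=0,1$), obtaining
\[
|\Phi^+_I|=\sum_{j=1}^{k-1}\tfrac12\,n_j(n_j-1)+n_k^2,
\]
which is the asserted identity.

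There is no real obstacle here; the only points that need a moment's care are the identification of the components of $I$ --- in particular checking that the last component always absorbs the short (resp.\ long) simple root $\alpha_n$ and is therefore of type $B$ (resp.\ $C$) rather than type $A$ --- together with verifying that the degenerate cases $n_j=1$ and $n_k=0$ are correctly handled by the same closed formula.
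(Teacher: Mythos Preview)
Your argument is correct: decomposing $I$ into its connected pieces $I_1,\ldots,I_k$, identifying $\Phi_{I_j}$ with $A_{n_j-1}$ for $j<k$ and with $B_{n_k}$ (resp.\ $C_{n_k}$) for $j=k$, and summing the standard counts $|\Phi^+(A_{m-1})|=\tfrac12 m(m-1)$ and $|\Phi^+(B_m)|=|\Phi^+(C_m)|=m^2$ is exactly the right computation, and your handling of the degenerate cases $n_j=1$ and $n_k=0$ is fine. The paper in fact states this lemma without proof, treating it as a routine root-count; your write-up simply supplies the straightforward verification the authors omitted.
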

%=\frac{1}{2}(\sum\limits_{1\leq j \leq k}n_j^2-n).$$

\begin{lemma}\label{du}
Let  $\mathfrak{g}=\mathfrak{so}(2n,\mathbb{C}) $. Suppose $\mathfrak{p}$ is a standard parabolic subalgebra of type $(n_1, n_2,\cdots,n_k)$.
Then we have
$$|\Phi^+_I|=\begin{cases}
		%	\frac{n(n-1)}{2}-F_a(\lambda) &\text{ if }\Phi=A_{n-1},\\
			\frac{1}{2}\sum\limits_{j=1 }^{k-1}n_j(n_j-1)+n_k^2-n_k &\text{ if }n_k\neq 1,\\
			\frac{1}{2}\sum\limits_{j=1 }^{k-2}n_j(n_j-1)+\frac{1}{2}(n_{k-1}+1)n_{k-1} &\text{ if }n_k=1.
		\end{cases}$$

\end{lemma}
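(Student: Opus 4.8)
The plan is to compute $|\Phi^+_I|$ by identifying $\Phi_I$ with the root system of the Levi factor $\mathfrak{l}_I$ and decomposing it into its irreducible pieces. By definition $\Phi^+_I=\Phi_I\cap\Phi^+$, where $\Phi_I=\Phi\cap\mathbb{Z}I$ is the root system of $\mathfrak{l}_I$; and $\mathfrak{l}_I$ is, up to its centre, the product of the simple Lie algebras attached to the connected components of the subdiagram of the $D_n$ Dynkin diagram supported on $I$. Hence $|\Phi^+_I|=\sum_C|\Phi^+(C)|$, summed over those components, and it suffices to read off the component types from $I$ and insert the classical values $|\Phi^+(A_m)|=\binom{m+1}{2}=\tfrac12 m(m+1)$ and $|\Phi^+(D_m)|=m(m-1)$, with the conventions $A_0=D_0=D_1=\varnothing$ and $D_2=A_1\times A_1$, under which these formulas remain correct.

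Assume first $n_k\neq 1$. Here $\Delta\setminus I=\{\alpha_{p_1},\dots,\alpha_{p_{k-1}}\}$ with $p_t=n_1+\cdots+n_t$, so $p_{k-1}=n-n_k$. Deleting these nodes cuts the diagram into consecutive pieces (some possibly empty): for $t=1,\dots,k-1$ the set $\{\alpha_{p_{t-1}+1},\dots,\alpha_{p_t-1}\}$ (with $p_0:=0$), which is a path on $n_t-1$ nodes, hence of type $A_{n_t-1}$; together with, when $n_k\geq 2$, the terminal segment $\{\alpha_{p_{k-1}+1},\dots,\alpha_{n-1},\alpha_n\}$, which is the standard $D_{n_k}$ subdiagram, whereas if $n_k=0$ the node $\alpha_n$ is itself deleted and there is no such block. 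Summing,
\[
|\Phi^+_I|=\sum_{t=1}^{k-1}\tfrac12 n_t(n_t-1)+n_k(n_k-1)=\frac12\sum_{t=1}^{k-1}n_t(n_t-1)+n_k^2-n_k ,
\]
the last term being genuinely $0$ when $n_k=0$; this is the first case of the lemma.

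For $n_k=1$ I would invoke the isomorphism recorded in the introduction: a standard parabolic of type $(n_1,\dots,n_{k-1},1)$ for $\mathfrak{so}(2n,\mathbb{C})$ is isomorphic to one of type $(n_1,\dots,n_{k-2},n_{k-1}+1,0)$, so it has the same Levi factor and hence the same $|\Phi^+_I|$; applying the previous paragraph to the latter type gives $|\Phi^+_I|=\sum_{t=1}^{k-2}\tfrac12 n_t(n_t-1)+\tfrac12(n_{k-1}+1)n_{k-1}$, the second case. (Directly one checks the same thing: deleting $\alpha_{n-1}=\alpha_{p_{k-1}}$ leaves, running through the branch node $\alpha_{n-2}$, a path $\alpha_{n-n_{k-1}}-\cdots-\alpha_{n-2}-\alpha_n$ of type $A_{n_{k-1}}$, i.e. a $\mathfrak{gl}_{n_{k-1}+1}$-block, and one re-sums the components.)

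The only genuine subtlety --- and exactly where the split into the two cases comes from --- is the bookkeeping at the fork of the $D_n$ diagram: one must correctly determine whether $\alpha_{n-1}$, $\alpha_n$, or neither is deleted, and how the final Levi block attaches to the branch node $\alpha_{n-2}$. Once the component types are pinned down the remainder is a routine tally of positive roots of products of classical root systems, so I expect no further obstacle.
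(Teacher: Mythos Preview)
Your proof is correct. The paper itself states this lemma without proof, treating it as a routine count of positive roots in the Levi factor; your argument supplies exactly the standard justification, identifying the connected components of $I$ in the $D_n$ Dynkin diagram as $A_{n_t-1}$ blocks for $t<k$ together with a terminal $D_{n_k}$ block (or, when $n_k=1$, a single $A_{n_{k-1}}$ block through the branch node), and summing the known values of $|\Phi^+|$ for each.
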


So $\dim (\mathfrak{u})=n^2-|\Phi^+_I|$ for types $B$ and $C$, and $\dim (\mathfrak{u})=n^2-n-|\Phi^+_I|$ for type $D$. We denote this number by $d_m(\mathfrak{p})$. Now our problem is to find out all simple modules $L(\lambda)$ in  $\mathcal{O}^{\mathfrak{p}}$ with maximal Gelfand-Kirillov dimension $d_m(\mathfrak{p}).$

Let $\Gamma$ be a totally ordered set. We denote by $\mathrm{Seq}_n (\Gamma)$ the set of sequences $x=(x_1,x_2,\cdots,x_n)$ with $x_i\in \Gamma$.
 From Bai-Xie \cite{BXie}, we know that for each  $\lambda\in \mathrm{Seq}_n (\Gamma)$, by using Robinson-Schensted insertion algorithm, there is a Young tableau $P(\lambda)$ corresponding to it.
 We recall this method from Bai-Xie \cite{BXie}.
Write  $\lambda=(\lambda_1,\cdots,\lambda_n)$.  Let $ P_0 $ be an empty Young tableau. Assume that we have constructed the Young tableau $ P_k $ associated to $ (\lambda_1,\cdots,\lambda_k) $, $ 0\leq k<n $. Then $ P_{k+1} $ is obtained by adding $ \lambda_{k+1} $ to $ P_k $ as follows. First add $ \lambda_{k+1} $ to the first row of $ T_k $ by replacing the leftmost entry $ x $ in the first row which is strictly bigger than $ \lambda_{k+1} $.  (If there is no such an entry $ x $, we just add a box with entry $\lambda_{k+1}  $ to the right side of the first row, and end this process.) Then add $ x $ to the next row as the same way of adding $ \lambda_{k+1} $ to the first row.  Then we put $P(\lambda)=P_n$.
 The shape of $P(\lambda)$ is denoted by $p(\lambda)$. We identify $p(\lambda)$ with the partition ${\bf p}$ corresponding to the Young tableau $P(\lambda)$.

\begin{definition}\label{fafbfd}
	Let $x\in \mathrm{Seq}_n (\Gamma)$. Define
	\begin{equation*}
		\begin{aligned}
			%F_a(x):&=\sum_{i\geq1}(i-1)p_i=\sum_{i\geq1}\frac{q_i(q_i-1)}{2},\\
			F_b(x):&=\sum_{i\geq1}(i-1)p_i\od=\sum_{2\nmid i}(q_i\od)^2+\sum_{2\mid i}q_i\od(q_i\od-1),\\
			F_d(x):&=\sum_{i\geq1}(i-1)p_i\ev=\sum_{2\nmid i}q_i\ev(q_i\ev-1)+\sum_{2\mid i}(q_i\ev)^2,
		\end{aligned}
	\end{equation*}
	where $p=p(x)=(p_1,p_2,\cdots) $ and $q=q(x)=p(x)^t=(q_1, q_2, \cdots)$.
\end{definition}

For $ x=(x_1,x_2,\cdots,x_n)\in \mathrm{Seq}_n (\Gamma) $, set
\begin{equation*}
	\begin{aligned}
		{x}^-=&(x_1,x_2,\cdots,x_{n-1}, x_n,-x_n,-x_{n-1},\cdots,-x_2,-x_1),\\
		{}^-{x}=&(-x_n,-x_{n-1},\cdots, -x_2,-x_1,x_1,x_2,\cdots, x_{n-1}, x_n).
	\end{aligned}
\end{equation*}

\begin{proposition}[{\cite[Theorem 1.5]{BXX}}]\label{thm2}
	Let $\lambda=(\lambda_1, \lambda_2, \cdots, \lambda_n)\in \mathfrak{h}^*$ be an integral weight. Then 
	\begin{equation*}
		 \GK L(\lambda)=\begin{cases}
		%	\frac{n(n-1)}{2}-F_a(\lambda) &\text{ if }\Phi=A_{n-1},\\
			n^2-F_b(\lambda^-) &\text{ if }\Phi=B_n/C_n,\\
			n^2-n-F_d(\lambda^-) &\text{ if }\Phi=D_n.
		\end{cases}
	\end{equation*}
\end{proposition}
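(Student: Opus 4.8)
Although Proposition~\ref{thm2} is quoted from \cite{BXX}, let me indicate the strategy I would use to prove it. The plan is to pass from $\GK L(\lambda)$ to the dimension of an attached nilpotent orbit, to identify that orbit combinatorially by Kazhdan-Lusztig cell theory, and then to re-express the orbit dimension through the doubled sequence $\lambda^{-}$.

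First I would invoke Joseph's theorem (see \cite{V}): for any simple highest weight module $\GK L(\lambda)$ equals the dimension of the associated variety $\mathcal{V}(L(\lambda))\subseteq\frn$, which is a union of orbital varieties for the nilpotent orbit $\mathbb{O}_\lambda$ whose closure is the associated variety of $\operatorname{Ann}L(\lambda)$; each orbital variety has dimension $\tfrac12\dim\mathbb{O}_\lambda$, so $\GK L(\lambda)=\tfrac12\dim\mathbb{O}_\lambda$, and the task reduces to computing $\dim\mathbb{O}_\lambda$ for integral $\lambda$.

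Next I would identify $\mathbb{O}_\lambda$. Writing $\lambda=w\cdot\mu$ with $\mu$ antidominant and $w$ minimal in its coset of $W_{[\lambda]}$, so that $L(\lambda)=L(w\cdot\mu)$, the orbit $\mathbb{O}_\lambda$ depends only on the two-sided Kazhdan-Lusztig cell of $w$, being the special orbit attached to it. For $\mathfrak{g}$ of type $B_n=C_n$ (resp.\ $D_n$), Garfinkle's domino Robinson-Schensted correspondence \cite{Gaa,Gab,Gac} attaches to $w$ a domino tableau $T(w)$, and $\mathbb{O}_\lambda$ is recovered from the shape $\sh T(w)$ by the appropriate collapse (for type $B$, after a correction of one part). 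The key combinatorial input, and the reason $\lambda^{-}$ appears, is that $\sh T(w)=p(\lambda^{-})$: under the embedding of $W(C_n)$ into $S_{2n}$ as the group of permutations of $\{1,\dots,2n\}$ commuting with $i\mapsto 2n+1-i$, domino insertion of $w$ matches, through the standard bijection between domino tableaux and their $2$-quotients, ordinary Robinson-Schensted insertion of the length-$2n$ sequence $\lambda^{-}$, the two cells of each domino being recorded respectively by an odd box and an even box of $p(\lambda^{-})$. This is precisely why $F_b$ (odd boxes) governs types $B,C$ and $F_d$ (even boxes) governs type $D$.

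Finally I would do the bookkeeping: feed the partition $p(\lambda^{-})$ into the standard dimension formulas for nilpotent orbits of $\frsp_{2n}$, $\frso_{2n+1}$ and $\frso_{2n}$ (expressed via the column lengths $q_i$ and their parities), check that the collapse (and the type-$B$ modification) is compatible with the counts $p\ev$ and $p\od$, and verify the resulting numerical identity, which comes down to the second displayed equality of Definition~\ref{fafbfd} rewriting $\sum_{i\ge1}(i-1)p_i\od$ through the $q_i\od$. I expect the main obstacle to be the combinatorial bridge just described: proving that the $2$-quotient of $\sh T(w)$ is read off from $p(\lambda^{-})$ as claimed, and controlling how the $B$-, $C$- and $D$-collapses interact with the odd/even box counts, including the point that the cell orbit is special whereas $p(\lambda^{-})$ in general is not. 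Once that dictionary is available, the rest is routine manipulation of floor and ceiling functions.
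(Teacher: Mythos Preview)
The paper does not supply its own proof of Proposition~\ref{thm2}; the result is quoted verbatim from \cite[Theorem~1.5]{BXX} and used as a black box throughout. So there is no in-paper argument against which to compare your proposal.

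Your outline is nonetheless the right one, and in fact mirrors the machinery the present paper borrows from \cite{BXX} and \cite{BMW} elsewhere: \S\ref{Pre} invokes \cite[Prop.~4.6]{BXX} for the fact that $p({}^{-}w)$ is of domino type, and the proofs in \S\ref{richar} rely on \cite{BMW} for the statement that the special orbit $\mathcal{O}_w$ and $P(\lambda^{-})$ share their odd (types $B$, $C$) or even (type $D$) boxes. You have also correctly located the two genuine obstacles: the combinatorial dictionary between Garfinkle's domino insertion for $w$ and ordinary RSK on the doubled sequence $\lambda^{-}$, and the invariance of the orbit-dimension formula under collapse. One refinement worth flagging: the identity you want is not literally $\sh T(w)=p(\lambda^{-})$ as shapes, but rather agreement of their odd (resp.\ even) box counts; since the nilpotent-orbit dimension formula factors through those counts---this is exactly the content of the second equalities in Definition~\ref{fafbfd}---that weaker statement suffices and makes the collapse step harmless, as you anticipate in your last paragraph.
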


Let  $\mathfrak{g}=\mathfrak{so}(2n+1,\mathbb{C}) $ or $\mathfrak{sp}(n,\mathbb{C})$.  
When $\mathfrak{p}$ is a standard parabolic subalgebra of type  $(n_1,n_2,\cdots,n_k)$, the maximal Gelfand-Kirillov dimension of simple modules in $\mathcal{O}^{\mathfrak{p}}$ will be $\dim(\mathfrak{u})=n^2-(\frac{1}{2}\sum_{j=1}^{k-1}n_j(n_j-1)+n_k^2)=n^2-F_b(\lambda^-)$. 

So a simple module $L(\lambda)$ in $\mathcal{O}^{\mathfrak{p}}$ is socular if and only if
$ F_b(\lambda^-)=|\Phi^+_I|,$
which is
\[ \sum_{i\geq1}(i-1)p_i\od=\frac{1}{2}\sum\limits_{j=1}^{k-1}n_j(n_j-1)+n_k^2.\]

\subsection{H-algorithm}
For an element $w$ in the Weyl group
$W$ of some Lie algebras, we have a partition $p({}^-{w})$.   We call it a {\it  partition of domino type} since it has the same shape as a domino tableau \cite[Proposition 4.6]{BXX}. In general, a partition $\bf p$ (Young diagram $P$) is also called a  partition (Young diagram) of domino type if it has the same corresponding diagram as some partition $p({}^-{w})$ for some $w\in W$.
We recall the H-algorithms defined in \cite{BMW} which can associate a special partition (in the sense of Lusztig, see \cite{Lus84} or \cite{CM93}) to a given domino type partition such that they have the same odd (resp. even) boxes for type $B$ and $C$ (resp. type $D$).
 \begin{definition}[H-algorithm of type $B$]
    If ${\bf p}$ be a partition of domino type (whose Young diagram is $P$) of $2n$, then we can get a special partition  ${\bf p}^s$ of type $B_n$ by the following steps:
    \begin{enumerate}
%\item Fill the boxes with label $E$ or $O$ accordingly.
     \item Construct  the hollow diagram $P\od$ consisting of odd boxes;
	\item Label the rows starting from $1$ but avoid all the consecutive rows
	ending with the shape  $\tytb{O,\none O}$;
\item Keep even labeled rows unchanged and put $\tytb{E}$ on the end of each odd labeled row;
\item Fill the holes. Then if there are only $2n$ boxes in our new Young diagram, we put a box $\tytb{E}$  below the last row and we are done. If there are  $2n+1$ boxes in our new Young diagram, we are done.
\end{enumerate}
We call the above algorithm  {\it H-algorithm of type $B$}.
     
 \end{definition}

\begin{example}
    Let ${\bf p}=[6,4^3,2,2,1,1]$ be a partition of domino type  of $24$. Then we have
\begin{align*}
    {\bf p}&=\tytb{EOEOEO,OEOE,EOEO,OEOE,EO,OE,E,O} \to
\tytb{{\none[1]}\none O\none O\none O, {\none}O\none O\none,\none\none O\none O, {\none[2]} O\none O,{\none[3]}\none O,{\none[4]} O\none, {\none[5]},{\none[6]}O}
\to
\tytb{{\none[1]}\none O\none O\none O E, {\none}O\none O\none,\none\none O\none O,{\none[2]} O\none O,{\none[3]}\none O E,{\none[4]} O\none,{\none[5]}E, {\none[6]}O}\\
%\leadsto
&\to
\tytb{{\none[1]}E OE OE O E ,{\none}OE OE,\none E O E O,{\none[2]} OE O,{\none[3]}E O E,{\none[4]} O\none,{\none[5]}E,{\none[6]}O}
\to \tytb{{\none[1]}E OE OE O E,{\none}OE OE,\none E O E O,{\none[2]} OE O,{\none[3]}E O E,{\none[4]} O\none,{\none[5]}E,{\none[6]}O, \none E}
={\bf p}^s.
\end{align*}

Thus ${\bf p}^s=[7,4,4,3,3,1,1,1,1]$ is a special partition of type $B_{12}$.
\end{example}

\begin{definition}[H-algorithm of type $C$]
    If ${\bf p}$ be a partition of domino type (whose Young diagram is $P$) of $2n$, then we can get a special partition  ${\bf p}^s$ of type $C$ by the following steps:
    \begin{enumerate}
%\item Fill the boxes with label $E$ or $O$ accordingly.
\item Construct  the hollow diagram $P\od$ consisting of odd boxes;
	\item Label the rows starting from $1$ but avoid all the consecutive rows
	ending with the shape  $\tytb{O,\none O}$ (when  two consecutive rows
	has the shape  $\tytb{E,O}$ in $P$, these two rows will not be labeled);
\item Keep odd labeled rows unchanged and put $\tytb{E}$ on the end of each even labeled row;
\item Fill the holes and we are done.
\end{enumerate}
We call the above algorithm {\it Hollow diagram algorithm} or {\it H-algorithm of type $C$}.
\end{definition}

\begin{definition}[H-algorithm of type $D$]
    If ${\bf p}$ be a partition of domino type (whose Young diagram is $P$) of $2n$, then we can get a special partition  ${\bf p}^s$ of type $D_n$ by the following steps:
    \begin{enumerate}
%\item Fill the boxes with label $E$ or $O$ accordingly.
\item Construct  the hollow diagram $P\ev$ consisting of even boxes;
	\item Label the rows starting from $1$ but avoid all the consecutive rows
	ending with the shape  $\tytb{E,\none E}$;
\item Keep odd labeled rows unchanged and put $\tytb{O}$ on the end of each even labeled row;
\item Fill the holes. Then if there are only $2n-1$ boxes in our new Young diagram, we put a box $\tytb{O}$  below the last row and we are done. If there are  $2n$ boxes in our new Young diagram, we are done.
\end{enumerate}
We call the above algorithm  {\it H-algorithm of type $D$}.
\end{definition}

	Given two partitions ${\bf d}=[d_1,\cdots,d_k]$ and ${\bf f}=[f_1,\cdots,f_k]$ of some integer $m$, we say that 	${\bf d}$ {\it dominates } 	${\bf f}$ if the following condition holds:
	\[\label{domi}
	    \sum\limits_{1\leq j\leq l}d_j\geq \sum\limits_{1\leq j\leq l}f_j
	\] for $1\leq l\leq k$.
	\begin{definition}[Collapse]\label{collapse}	
		Let ${\bf d}=[d_1,\cdots,d_k]$ be a partition of $2n+1$. There is a unique largest partition of $2n+1$ of type $B_n$ dominated by ${\bf d}$. If  ${\bf d}$ is not a partition of type $B_n$, then one of its even parts must occur with odd multiplicity. Let $q$ be the largest such part. Then replace the last occurrence of $q$ in ${\bf d}$ by $q-1$ and the first subsequent part $r$ strictly less than $q-1$ by $r+1$. Repeat this process until a partition of type $B_n$ is obtained. This new partition of type $B_n$
		is called the {\it $B$-collapse} of 	${\bf d}$, and we denote it by ${\bf d}_B$. Similarly there are $D$-collapse and $C$-collapse of ${\bf d}$.
	\end{definition}

There is another concept called {\it expansion} which can compute the smallest special partition dominating a given partition ${\bf q}$ of type $X$ (here $X=B,C$ or $D$). This special partition is denoted by ${\bf q}^X$.

	More properties for the  collapse and expansion of partitions can be found in \cite{CM93}.

\begin{remark}
During the process of collapse and expansion, the odd boxes are fixed for types $B$ and $C$ (resp. even boxes are fixed for type $D$) and the moving even box can not meet another even box in the same row (resp. the moving odd box can not meet another odd box in the same row for type $D$). We may call these two operations {\it restricted collapse and expansion}, denoted them by ${\bf q}_{\bar{X}}$ and ${\bf q}^{\bar{X}}$. Then ${\bf q}$, ${\bf q}_{\bar{X}}$ and ${\bf q}^{\bar{X}}$ have the same odd (resp. even) boxes for type $B$ and $C$ (resp. type $D$).
When ${\bf p}$ is a domino type partition, $H({\bf p})$ will be a special partition which has the same odd (resp. even) boxes for type $B$ and $C$ (resp. type $D$). Thus we have $H({\bf p})=({\bf p}_{\bar{X}})^{\bar{X}}$.

\end{remark}

%%%%%%%%%%%%%%%%%%%%%%%%%%%%%%%%%%%%%%%%%%%%%%%%%%%%%%%%%%%%%%%%%%%%
%

\section{Proof of Theorem \ref{mainBC}}\label{proofbc}
%
%%%%%%%%%%%%%%%%%%%%%%%%%%%%%%%%%%%%%%%%%%%%%%%%%%%%%%%%%%%%%%%%%%%%
%

In this section, we will prove Theorem \ref{mainBC}. Let  $\mathfrak{g}=\mathfrak{so}(2n+1,\mathbb{C}) $ or $\mathfrak{sp}(n,\mathbb{C})$ in this section.

Firstly, we
need two lemmas.

\begin{lemma}\label{decreasing}
Suppose $\{m_1,m_2,\cdots,m_k\}$ is a sequence of decreasing positive integers. We have a function $f(x_1,\cdots,x_k)=\sum_{j=1}^{k} x_j^2$, where $(x_1,x_2,\cdots,x_k)\in D$ with 
\begin{align*}D=\{(x_1,\cdots,x_k)\in \mathbb{R}^k\mid &   x_1 \geq m_1, x_1+x_2\geq m_1+m_2,\cdots, 
	\sum_{j=1}^{k-1}x_j\geq \sum_{j=1}^{k-1}m_j,\\
	& \sum_{j=1}^k x_j= \sum_{j=1}^{k}m_j, x_1\geq x_2\geq \cdots\geq x_k\geq 0 \}.
\end{align*}
Then $f$ will take the minimal value
 $\sum_{j=1}^k m_j^2$ if and only if $x_j=m_j$ for all $1\leq j\leq k$.
\end{lemma}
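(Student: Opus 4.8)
The plan is to prove this as a constrained optimization problem, exploiting convexity of $f$ and an exchange (majorization) argument. First I would observe that $D$ is a compact convex polytope (it is cut out by finitely many linear inequalities together with one linear equality, and is bounded since $0 \le x_j$ and $\sum x_j = \sum m_j$ is fixed), and $f(x_1,\dots,x_k) = \sum x_j^2$ is continuous, so a minimum is attained. Moreover $(m_1,\dots,m_k) \in D$ by hypothesis (the $m_j$ are decreasing nonnegative integers and all the partial-sum inequalities hold with equality), so the minimum is at most $\sum m_j^2$.

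The core of the argument is the following exchange step. Suppose $x = (x_1,\dots,x_k) \in D$ with $x \neq m$. I claim there is a point $x' \in D$ with $f(x') < f(x)$, unless $x$ is already a vertex-type configuration forcing $x = m$; more precisely, I would show that any minimizer must equal $m$. Let $t$ be the smallest index with $x_t \neq m_t$. Since all partial sums $\sum_{j\le l} x_j \ge \sum_{j \le l} m_j$ and they first differ at $l = t$, we get $\sum_{j \le t} x_j > \sum_{j \le t} m_j$, hence $x_t > m_t$ (because $x_j = m_j$ for $j < t$). Since $\sum_{j=1}^k x_j = \sum_{j=1}^k m_j$, there is some index $s > t$ with $x_s < m_s$; pick the largest such $s$, or just any. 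Now I would perturb: replace $x_t \mapsto x_t - \varepsilon$ and $x_s \mapsto x_s + \varepsilon$ for small $\varepsilon > 0$. This keeps the total sum fixed. The change in $f$ is $(x_t-\varepsilon)^2 + (x_s+\varepsilon)^2 - x_t^2 - x_s^2 = 2\varepsilon(x_s - x_t) + 2\varepsilon^2$, which is negative for small $\varepsilon$ provided $x_s < x_t$ — and indeed $x_s \le x_t$ since the $x_j$ are weakly decreasing, with $x_s < x_t$ as long as $s,t$ lie in different "constant blocks" of $x$; I handle equality $x_s = x_t$ separately by moving mass through the whole block at once (i.e. decrease the first coordinate of the block containing $t$ and increase the last coordinate of the block containing $s$, which by strict convexity of $u \mapsto u^2$ still strictly decreases $f$ — this is the standard fact that spreading out a weakly-decreasing sequence while fixing its sum and preserving order lowers the sum of squares).

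The remaining point is feasibility: I must check the perturbed $x'$ still lies in $D$. The monotonicity $x_1' \ge x_2' \ge \cdots \ge x_k' \ge 0$ is preserved for small $\varepsilon$ away from block boundaries, and when moving whole blocks one checks the endpoints directly. For the partial-sum constraints $\sum_{j \le l} x_j' \ge \sum_{j \le l} m_j$: for $l < t$ nothing changes; for $l \ge s$ nothing changes (the $-\varepsilon$ and $+\varepsilon$ cancel); and for $t \le l < s$ we have $\sum_{j\le l} x_j' = \sum_{j \le l} x_j - \varepsilon$, which is still $\ge \sum_{j \le l} m_j$ for small $\varepsilon$ because $\sum_{j\le l} x_j \ge \sum_{j\le t} x_j > \sum_{j\le t}m_j \ge \sum_{j \le l} m_j$? — here I should be careful: $\sum_{j\le l} m_j \ge \sum_{j\le t} m_j$ when $l \ge t$, so this inequality is not automatic. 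Instead I would argue: if $\sum_{j\le l}x_j = \sum_{j\le l}m_j$ for some $l$ in the range $t \le l < s$, then I should have chosen $s \le l$ in the first place (pick $s$ to be the smallest index $> t$ at which the partial-sum inequality becomes an equality, or $s = k$ if it never does; then $x_s < m_s$ is forced by comparing consecutive partial sums, and for all $l$ with $t \le l < s$ the inequality is strict, giving the needed room). This choice of $s$ is the one delicate bookkeeping point, and I expect it to be the main obstacle — making sure the pair $(t,s)$ can always be chosen so that both strict slack in the partial sums and strict decrease of $f$ hold simultaneously. Once that is arranged, any $x \neq m$ is non-optimal, so the unique minimizer is $x = m$ with value $\sum_{j=1}^k m_j^2$, which is exactly the claim.
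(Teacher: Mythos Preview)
Your approach is essentially correct and genuinely different from the paper's. The paper proceeds by induction on $k$: it observes that the unconstrained minimizer (the centroid $(\frac{m}{k},\dots,\frac{m}{k})$) lies outside $D$, so the minimum must occur on $\partial D$, and then analyzes each boundary face $x_1=m_1$, $x_1+x_2=m_1+m_2$, \dots\ by reducing to a lower-dimensional instance of the same lemma. Your argument is instead a direct exchange (Robin Hood) argument, showing that any $x\neq m$ in $D$ can be perturbed within $D$ to strictly decrease $f$. Conceptually this is cleaner: the constraints defining $D$ say exactly that $x$ majorizes $m$, and the conclusion is the special case $\phi(u)=u^2$ of the classical Hardy--Littlewood--P\'olya/Schur-convexity inequality, so one could even just cite that. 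The paper's route is more self-contained but somewhat ad hoc.

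Two small corrections to your details. First, the case $x_s=x_t$ you worry about cannot occur: with your choices you have $x_s<m_s\le m_t<x_t$, so the strict decrease of $f$ is automatic. The genuine obstacle to feasibility is rather $x_t=x_{t+1}$ or $x_{s-1}=x_s$, which breaks monotonicity of $x'$. Second, your block fix is stated backwards: to keep $x_1'\ge\cdots\ge x_k'$ you must decrease the \emph{last} (largest-index) coordinate $x_{t''}$ of the block containing $t$ and increase the \emph{first} (smallest-index) coordinate $x_{s'}$ of the block containing $s$, not the other way round. With your choice of $s$ (the least index $>t$ where the partial-sum constraint binds) one has $[t'',s')\subseteq[t,s)$, so the strict slack you arranged in the partial sums is inherited at the shifted indices, and the argument goes through.
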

\begin{proof} When  $f$ is a function of two variables, the conclusion is obvious. We assume the conclusion is true for all functions of   $k-1$ variables or less.

Now we assume $f$ is a function of $k$ variables.
We denote $m=\sum_{j=1}^k m_j$. If there is no restriction, the function $f$ will take the minimal value at the point $P_0=(\frac{m}{k},\cdots,\frac{m}{k})$. Now the problem is equivalent to finding out all points on the given plane $\Pi: \sum_{j=1}^k x_j=m$ with some restriction conditions such that the distance from the origin to these points will take the minimal value $d=\sqrt{\sum_{j=1}^k m_j^2}$.
From the condition $x_1 \geq m_1$, $x_1+x_2\geq m_1+m_2, \cdots, \sum_{j=1}^{k-1} x_j\geq \sum_{j=1}^{k-1} m_j$, $x_1\geq x_2\geq \cdots\geq x_k\geq 0$, we know the domain $D\subseteq \Pi$ of our function $f$ is a bounded connected closed subset  in the first quadrant and $P_0\notin D$ (unless all the integers $m_i$ are equal). So $f$ will take its minimal value $d^2$ at the boundary $\partial D$ of $D$.

When $x_1=m_1$, we have $f(m_1,x_2,\cdots,x_k)=m_1^2+\sum_{j=2}^k x_j^2$. We denote $$f_2(x_2,\cdots,x_k)=\sum_{j=2}^k x_j^2,$$ where
 \begin{align*}(x_2,\cdots,x_k)\in D_2&=\{(x_2,\cdots,x_k)\in \mathbb{R}^{k-1}|   x_2 \geq m_2, x_2+x_3\geq m_2+m_3,\cdots,\\
 	& \sum_{j=2}^{k-1} x_j\geq \sum_{j=2}^{k-1} m_j, \sum_{j=2}^{k}x_j= \sum_{j=2}^{k}m_j,  x_2\geq x_3\geq \cdots\geq x_k\geq 0\}.\end{align*}
 So $f_2$ is a function of $k-1$ variables. By our induction, $f_2$ will take its minimal value if and only if   $x_j=m_j$ for all $2\leq j\leq k$. So $f$ will take its minimal value at the boundary $x_1=m_1$ if and only if  $x_j=m_j$ for all $1\leq j\leq k$.

When $x_1+x_2=m_1+m_2$, we have  $$f(x_1,x_2,\cdots,x_k)=(x_1^2+x_2^2)+\sum_{j=3}^{k}x_j^2:=g_2(x_1,x_2)+f_3(x_3,\cdots,x_k).$$
This is a sum of two functions which satisfy our induction. Then  $g_2$ will take its minimal value if and only if $x_1=m_1,x_2=m_2$ and $f_3$ will take its minimal value if and only if   $x_j=m_j$ for all $3\leq j\leq k$. So $f$ will take its minimal value at the boundary $x_1+x_2=m_1+m_2$ if and only if  $x_j=m_j$ for all $1\leq j\leq k$.

We continue this process and finally, when $\sum_{j=1}^k x_j=m=\sum_{j=1}^k m_j$, we will have  \begin{align*}
   f(x_1,x_2,\cdots,x_k)&=x_k^2+\sum_{j=1}^{k-1} x_j^2\\
   &=(m-\sum_{j=1}^{k-1}x_j)^2+\sum_{j=1}^{k-1}x_j^2\\
   &:=f_k(x_1,\cdots,x_{k-1}), 
\end{align*}
where  \begin{align*}(x_1,\cdots,x_{k-1})\in D_k=\{&(x_1,\cdots,x_{k-1})\in \mathbb{R}^{k-1}|   x_1 \geq m_1, x_1+x_2\geq m_1+m_2,\cdots, \\
&\sum_{j=1}^{k-1}x_j\geq \sum_{j=1}^{k-1}m_j,  x_1\geq x_2\geq \cdots\geq x_{k-1}\geq 0\}. \end{align*}

If there is no restriction, the function $f_{k}$ will take the minimal value at the point $Q_0=(\frac{m}{k},\cdots,\frac{m}{k})$. But $Q_0\notin D_k$. So the function $f_{k}$ will take the minimal value at the boundary $\partial D_k$ of $D_k$. When $x_1=m_1, \cdots, \sum_{j=1}^{k-2}x_j= \sum_{j=1}^{k-2}m_j$, the arguments are similar to the case of  $f$.  When  $\sum_{j=1}^{k-1}x_j= \sum_{j=1}^{k-1}m_j$, we have $$f(x_1,x_2,\cdots,x_k)=(\sum_{j=1}^{k-1}x_j)+m_k^2:=g_{k-1}+m_k^2,$$
where $g_{k-1}$ is a function of $k-1$ variables which satisfies our induction.

Thus we have proved our lemma.

\end{proof}

Suppose $\{m_1,m_2,\cdots,m_k\}$ is a sequence of integers. If there exists some index $2\leq s\leq k-1$ such that  $m_{s-1}-1\geq 2m_s\geq m_{s+1} $ and $ m_i\geq m_{i+1} ~ (i\neq s,  s-1)$, the sequence $\{m_1,m_2,\cdots,m_k\}$ will be called an \emph{`almost' descending sequence of index $s$}.

\begin{lemma}\label{decreasingb}
Suppose $\{m_1,m_2,\cdots,m_k\}$ is an `almost' descending sequence of index $s$. Denote $h_t:=\sum\limits_{1\leq j \leq t}m_j $ for $1\leq t\leq k$. We define a function $$f(x_1,\cdots,x_k):=\frac{1}{2}\sum\limits_{1\leq j \leq k,j\neq s}x_j(x_j-1)+x_s^2,$$ where $(x_1,x_2,\cdots,x_k)\in D$ with 
\begin{align*}D=\{(x_1,\cdots,x_k)\in \mathbb{R}^k\mid &   x_1 \geq m_1, x_1+x_2\geq m_1+m_2,\cdots, 	\sum_{j=1}^{k-1}x_j\geq \sum_{j=1}^{k-1}m_j,\\
	& \sum_{j=1}^{k}x_j= \sum_{j=1}^{k}m_j, x_i\geq x_{i+1}(i\neq s,  i+1\neq s), \\
 & x_{s-1}-1\geq 2x_s\geq x_{s+1}\}.
\end{align*}
Then $f$ will take the minimal value
 $\frac{1}{2}\sum\limits_{1\leq j \leq k,j\neq s}m_j(m_j-1)+m_s^2$ if and only if $x_j=m_j$ for all $1\leq j\leq k$.
\end{lemma}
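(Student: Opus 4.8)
The plan is to recognize the lemma as the assertion that $x=m$ is the unique minimizer of a strictly convex function over a convex polyhedron, and then to verify optimality of $x=m$ by a single supporting-hyperplane (KKT-type) inequality, avoiding the boundary induction used for Lemma~\ref{decreasing}. The objective $f(x)=\tfrac12\sum_{j\neq s}x_j(x_j-1)+x_s^2$ has diagonal Hessian $\operatorname{diag}(1,\dots,1,2,1,\dots,1)$ with the entry $2$ in slot $s$, hence is strictly convex on $\mathbb{R}^k$, and $D$ is convex. So it suffices to check that $m\in D$ and that $\langle\nabla f(m),\,x-m\rangle\ge 0$ for every $x\in D$: the first-order convexity inequality $f(x)\ge f(m)+\langle\nabla f(m),\,x-m\rangle$ then gives $f(x)\ge f(m)$, and strict convexity forces $x=m$ whenever equality holds. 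No compactness of $D$ is needed.

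That $m\in D$ is immediate from the definition of an `almost' descending sequence of index $s$: at $x=m$ the partial-sum inequalities become equalities, the inequalities $m_i\ge m_{i+1}$ for $i\neq s,s-1$ hold by hypothesis, and $m_{s-1}-1\ge 2m_s\ge m_{s+1}$ is also part of the hypothesis. Next I would compute $\nabla f(m)$: its $j$-th component is $c_j:=m_j-\tfrac12$ for $j\neq s$ and $c_s:=2m_s$. The decisive point is that $c_1\ge c_2\ge\cdots\ge c_k$. For $l\notin\{s-1,s\}$ this is just $c_l-c_{l+1}=m_l-m_{l+1}\ge 0$; for $l=s-1$ it reads $c_{s-1}-c_s=m_{s-1}-\tfrac12-2m_s\ge\tfrac12$, using $m_{s-1}-1\ge 2m_s$; and for $l=s$ it reads $c_s-c_{s+1}=2m_s-m_{s+1}+\tfrac12\ge\tfrac12$, using $2m_s\ge m_{s+1}$. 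Thus the three defining relations of an `almost' descending sequence are exactly what is needed, and all of them are used.

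With $c$ nonincreasing I would conclude by Abel summation. Put $d_j=x_j-m_j$; for $x\in D$ the constraints give $\sum_{j=1}^{l}d_j\ge 0$ for $l\le k-1$ and $\sum_{j=1}^{k}d_j=0$. Hence, using $\sum_{j=1}^{k}d_j=0$,
\[
\langle\nabla f(m),\,x-m\rangle=\sum_{j=1}^{k}c_j d_j=\sum_{l=1}^{k-1}(c_l-c_{l+1})\Bigl(\sum_{j=1}^{l}d_j\Bigr)\ \ge\ 0 .
\]
Together with the convexity inequality above this gives $f(x)\ge f(m)=\tfrac12\sum_{j\neq s}m_j(m_j-1)+m_s^2$ for all $x\in D$, with equality forcing $x=m$ by strict convexity, which is the asserted equivalence.

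I expect no serious analytic difficulty once this viewpoint is adopted; the one place the hypotheses enter nontrivially is the monotonicity of $c$ at slot $s$, where the missing $-\tfrac12$ shift in $c_s=2m_s$ relative to its neighbours is compensated exactly by the two inequalities $m_{s-1}-1\ge 2m_s$ and $2m_s\ge m_{s+1}$ (the slack is $\ge\tfrac12$, so the estimate is comfortably non-tight). If one preferred to mirror the proof of Lemma~\ref{decreasing}, the same result follows by induction on $k$, decomposing the boundary of $D$ into the faces $\{\sum_{j\le l}x_j=\sum_{j\le l}m_j\}$, $\{x_i=x_{i+1}\}$, $\{x_{s-1}-1=2x_s\}$ and $\{2x_s=x_{s+1}\}$ and applying Lemma~\ref{decreasing} together with the inductive hypothesis on the resulting subproblems; but this requires extra bookkeeping near index $s$ that the convexity argument sidesteps.
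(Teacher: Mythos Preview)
Your argument is correct and takes a genuinely different route from the paper's. The paper proceeds by induction on $k$: after eliminating $x_k$ via the equality $\sum_j x_j=h_k$, it checks that the resulting function of $(x_1,\dots,x_{k-1})$ has nonnegative partial derivative in each variable on $D$ (using the ordering constraints $x_i\ge x_{i+1}$ and $2x_s\ge x_{s+1}$ to get $x_i\ge x_k$), slides $x_{k-1}$ down until $x_k=m_k$, and then invokes the inductive hypothesis (together with Lemma~\ref{decreasing} when $s=k$). Your proof replaces this recursive boundary analysis with a single global estimate: strict convexity of $f$, the gradient $c=\nabla f(m)$, and Abel summation against the partial-sum constraints. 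The key observation, that the `almost' descending hypotheses $m_{s-1}-1\ge 2m_s$ and $2m_s\ge m_{s+1}$ are exactly what make $c$ monotone nonincreasing at slot $s$ (with slack $\ge\tfrac12$), is both correct and illuminating. Your argument is shorter, avoids the case distinction $s=k$ versus $s\neq k$, and in fact uses only the partial-sum constraints from $D$---the ordering constraints $x_i\ge x_{i+1}$ and $x_{s-1}-1\ge 2x_s\ge x_{s+1}$ enter only to check $m\in D$, so you actually prove the minimum over a larger polyhedron. The paper's approach, by contrast, is more hands-on and does rely on the ordering constraints to control the sign of the partial derivatives, but in exchange requires no explicit appeal to convex-analysis language.
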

\begin{proof} 
When  $f$ is a function of two variables, the conclusion is obvious. We assume that the conclusion is true for all functions of   $k-1$ variables or less.

Now we assume that $f$ is a function of $k$ variables. Note that $h_k=\sum\limits_{1\leq j \leq k}m_j=\sum\limits_{1\leq j \leq k}x_j$.

% \begin{align*}
%  f(x_1,\cdots,x_k)=&\frac{1}{2}\sum\limits_{1\leq j \leq k,j\neq s}x_j(x_j-1)+x_s^2\\
%  =&\frac{1}{2}\sum_{j=1}^{k-1}x_j(x_j-1)+(n-\sum_{j=1}^{k-1}x_j)^2\\
%  :=&g(x_1,\cdots,x_{k-1}).  
% \end{align*}  Thus $g_i:=\frac{\partial g}{\partial x_i}=x_i-\frac{1}{2}-2(n-\sum_{j=1}^{k-1}x_j)
% =x_i-\frac{1}{2}-2x_s>0$ for all $i\neq s$. 
% Then we have \begin{align*}
%   f(x_1,\cdots,x_k)=&g(x_1,\cdots,x_{k-1})\\
%   \geq &g(x_1,\cdots,x_{k-2},h_{k-1}-\sum_{j=1}^{k-2}x_j)\\
%   =&f(x_1,\cdots,x_{k-2},h_{k-1}-\sum_{j=1}^{k-2}x_j,m_k).  
% \end{align*}The equality holds if and only if  $x_{k-1}=h_{k-1}-\sum_{j=1}^{k-2}x_j$, which is equivalent to $x_k=m_k$. 

% Thus from Lemma \ref{decreasing}, we have
% $$f(x_1,\cdots,x_{k-2},h_{k-1}-\sum_{j=1}^{k-2}x_j,m_k)\geq f(m_1,\cdots,m_k).$$ The equality holds if and only if  $x_i=m_i$.

If $s\neq k$, we have \begin{align*}
    f(x_1,\cdots,x_k)=&\frac{1}{2}\sum\limits_{1\leq j \leq k,j\neq s}x_j(x_j-1)+x_s^2\\=&\frac{1}{2}\sum_{j=1}^{k-1}x_j^2+\frac{1}{2}x_s^2+\frac{1}{2}x_s+\frac{1}{2}(h_k-\sum_{j=1}^{k-1}x_j)^2-\frac{1}{2}h_k\\:=&g(x_1,\cdots,x_{k-1}).
\end{align*}
Then
\begin{align*}
    g_s:=\frac{\partial g}{\partial x_s}=&2x_s+\frac{1}{2}-(h_k-\sum_{j=1}^{k-1}x_j)=2x_s+\frac{1}{2}-x_k>0, \\
    g_i:=\frac{\partial g}{\partial x_i}=&x_i-(h_k-\sum_{j=1}^{k-1}x_j)=x_i-x_k\geq 0,(i\neq s).
\end{align*}
Thus we have 
\begin{align*}
    f(x_1,\cdots,x_k)=&g(x_1,\cdots,x_{k-1})\\ \geq& g(x_1,\cdots,x_{k-2},h_{k-1}-\sum_{j=1}^{k-2}x_j)\\=&f(x_1,\cdots,x_{k-2},h_{k-1}-\sum_{j=1}^{k-2}x_j,m_k).
\end{align*}
The equality holds if and only if  $x_{k-1}=h_{k-1}-\sum_{j=1}^{k-2}x_j$, which is equivalent to $x_k=m_k$. Then  by induction  $f(x_1,\cdots,x_{k-2},h_{k-1}-\sum_{j=1}^{k-2}x_j,m_k)\geq f(m_1,\cdots,m_k)$ and  the equality holds  if and only if $x_i=m_i$ for all $1\leq i\leq k-2$. Therefore $f(x_1,\cdots,x_k)\geq f(m_1,\cdots,m_k)$ and  the equality holds  if and only if $x_i=m_i$ for all $1\leq i\leq k$.

If $s=k$, similarly by Lemma \ref{decreasing} and the assumption by induction, we can prove that $f(x_1,\cdots,x_k)\geq f(m_1,\cdots,m_k)$, and the
 equality holds if and only if $x_i=m_i$ for all $1\leq i\leq k$.

Thus we have proved our lemma.

\end{proof}

We use  \begin{tabular}{l}
\begin{tikzpicture}[x=0.38cm,y=0.38cm]
\draw[-](0,0) -- (1,0);
\draw[-](1,2) -- (1,0);
\draw[-](0,2) -- (1,2);
\draw[-](0,2) -- (0,0);
\node  at (0.5,1) {\small{A}};
\end{tikzpicture}
\end{tabular}
% {\begin{tikzpicture}[scale=\domscale-0.2,baseline=-15pt]
% \vdom{0}{0}{A}
% \end{tikzpicture}} 
to denote a vertical rectangle  consisting of two adjacent boxes and 
% \begin{tikzpicture}[scale=\domscale-0.2,baseline=-7pt]
% \hdom{0}{0}{B}
% \end{tikzpicture} 
\begin{tabular}{l}
\begin{tikzpicture}[x=0.38cm,y=0.38cm]
\draw[-](0,0) -- (2,0);
\draw[-](2,1) -- (2,0);
\draw[-](0,1) -- (2,1);
\draw[-](0,1) -- (0,0);
\node  at (1,0.5) {\small{B}};
\end{tikzpicture}
\end{tabular}
to denote a horizontal rectangle  consisting of two adjacent boxes, called {\it  $A$-domino} and {\it  $B$-domino} respectively.

\begin{definition}[Z-diagram]\label{zdigram}
    We construct a Z-diagram of type  $(a_0;b_1,\cdots,b_{k-1})$ by the following steps:
    \begin{enumerate}
%\item Fill the boxes with label $E$ or $O$ accordingly.
    	\item Put $b_i$  $ B$-dominoes in two columns and denote this diagram by  $A_i$ for $1\leq i\leq k-1$ ;
       \item Put $a_0$  $A$-dominoes in one column and denote this diagram by   $A_k$;
\item Rearrange these diagrams such that they are descending by the heights of columns;
%, and still denote them by $A_1,\cdots,A_k$;
\item Put these diagrams together to construct a domino type partition.
\end{enumerate}
The above diagram is called a {\it  Z-diagram of type $(a_0;b_1,\cdots,b_{k-1})$}.
\end{definition}

It is clear that a Z-diagram gives us a domino type partition.

\begin{example}
   The followings are Z-diagrams of type $(4;1,3)$, $(3;3,7,1)$ and $(0;2,7)$ respectively:
$$ \begin{tabular}{l}
\begin{tikzpicture}[x=0.38cm,y=0.38cm]
\draw[-](0,0) -- (1,0);
\draw[-](0,2) -- (1,2);
\draw[-](0,4) -- (1,4);
\draw[-](0,6) -- (1,6);
\draw[-](0,8) -- (1,8);
\draw[-](1,8) -- (1,0);
\draw[-](0,0) -- (0,8);
\draw[-](1,7) -- (5,7);
\draw[-](1,6) -- (3,6);
\draw[-](1,5) -- (3,5);
\draw[-](1,8) -- (5,8);
\draw[-](3,5) -- (3,8);
\draw[-](5,7) -- (5,8);
\node  at (0.5,1) {\small{A}};
\node  at (0.5,3) {\small{A}};
\node  at (0.5,5) {\small{A}};
\node  at (0.5,7) {\small{A}};

\node  at (2,5.5) {\small{B}};
\node  at (2,6.5) {\small{B}};
\node  at (2,7.5) {\small{B}};
\node  at (4,7.5) {\small{B}};

\end{tikzpicture},
\end{tabular}  
    \begin{tabular}{l}
\begin{tikzpicture}[x=0.38cm,y=0.38cm]
\draw[-](0,0) -- (2,0);
\draw[-](0,1) -- (3,1);
\draw[-](0,2) -- (2,2);
\draw[-](0,3) -- (3,3);

\draw[-](0,4) -- (2,4);
\draw[-](0,0) -- (0,7);
\draw[-](3,4) -- (5,4);
\draw[-](0,5) -- (5,5);
\draw[-](0,6) -- (2,6);
\draw[-](3,6) -- (7,6);
\draw[-](0,7) -- (7,7);
\draw[-](2,0) -- (2,7);

\draw[-](3,1) -- (3,7);
\draw[-](5,4) -- (5,7);
\draw[-](7,6) -- (7,7);
\node  at (1,0.5) {\small{B}};
\node  at (1,1.5) {\small{B}};
\node  at (1,2.5) {\small{B}};
\node  at (1,3.5) {\small{B}};
\node  at (1,4.5) {\small{B}};
\node  at (1,5.5) {\small{B}};
\node  at (1,6.5) {\small{B}};

\node  at (2.5,2) {\small{A}};
\node  at (2.5,4) {\small{A}};
\node  at (2.5,6) {\small{A}};
\node  at (4,4.5) {\small{B}};
\node  at (4,5.5) {\small{B}};
\node  at (4,6.5) {\small{B}};
\node  at (6,6.5) {\small{B}};
\end{tikzpicture},
\end{tabular}  
    \begin{tabular}{l}
\begin{tikzpicture}[x=0.38cm,y=0.38cm]
\draw[-](0,0) -- (2,0);
\draw[-](0,1) -- (2,1);
\draw[-](0,2) -- (2,2);
\draw[-](0,3) -- (2,3);

\draw[-](0,4) -- (2,4);
\draw[-](0,0) -- (0,7);

\draw[-](0,5) -- (4,5);
\draw[-](0,6) -- (4,6);

\draw[-](0,7) -- (4,7);
\draw[-](2,0) -- (2,7);

\draw[-](4,5) -- (4,7);

\node  at (1,0.5) {\small{B}};
\node  at (1,1.5) {\small{B}};
\node  at (1,2.5) {\small{B}};
\node  at (1,3.5) {\small{B}};
\node  at (1,4.5) {\small{B}};
\node  at (1,5.5) {\small{B}};
\node  at (1,6.5) {\small{B}};

\node  at (3,6.5) {\small{B}};
\node  at (3,5.5) {\small{B}};

\end{tikzpicture}.
\end{tabular}  $$
   
 %  \[
% \begin{tikzpicture}[scale=\domscale-0.1,baseline=-15pt]
% \vdom{0}{0}{A}
% \vdom{0}{2}{A}
% \vdom{0}{4}{A}
% \vdom{0}{6}{A}
% \hdom{1}{0}{B}
% \hdom{1}{1}{B}
% \hdom{1}{2}{B}
% \hdom{3}{0}{B}
% \end{tikzpicture} 
% \quad
% \begin{tikzpicture}[scale=\domscale-0.1,baseline=-15pt]
% \hdom{0}{0}{B}
% \hdom{0}{1}{B}
% \hdom{0}{2}{B}
% \hdom{0}{3}{B}
% \hdom{0}{4}{B}
% \hdom{0}{5}{B}
% \hdom{0}{6}{B}
% \vdom{2}{0}{A}
% \vdom{2}{2}{A}
% \vdom{2}{4}{A}
% \hdom{3}{0}{B}
% \hdom{3}{1}{B}
% \hdom{3}{2}{B}
% \hdom{5}{0}{B}
% \end{tikzpicture} 
% \quad
% \begin{tikzpicture}[scale=\domscale-0.1,baseline=-15pt]
% \hdom{0}{0}{B}
% \hdom{0}{1}{B}
% \hdom{0}{2}{B}
% \hdom{0}{3}{B}
% \hdom{0}{4}{B}
% \hdom{0}{5}{B}
% \hdom{0}{6}{B}
% \hdom{2}{0}{B}
% \hdom{2}{1}{B}
% \end{tikzpicture}.
% \] 
\end{example}

    In a Z-diagram, it is obvious that the subdiagram  consisting of $A$-dominoes  always lies in an odd column.

\begin{lemma}\label{countfbfd}
 Suppose   $P$ is a Z-diagram type of $(a_0;b_1,\cdots,b_{k-1})$.Then we have 
    
	\begin{equation*}
		\begin{aligned}
			%F_a(x):&=\sum_{i\geq1}(i-1)p_i=\sum_{i\geq1}\frac{q_i(q_i-1)}{2},\\
			F_b(P)&=a_0^2+\frac{1}{2}\sum_{i}b_i(b_i-1),\\
			F_d(P)&=a_0^2-a_0+\frac{1}{2}\sum_{i}b_i(b_i-1).
		\end{aligned}
	\end{equation*}
\end{lemma}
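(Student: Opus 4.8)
The plan is to read the answer off from the block structure of a Z-diagram. By Definition \ref{zdigram}, $P$ is assembled by placing side by side, in weakly decreasing order of column height, the blocks $A_1,\dots,A_{k-1},A_k$, where for $i<k$ the block $A_i$ is a rectangle of width $2$ and height $b_i$ built from $b_i$ stacked $B$-dominoes, and $A_k$ is a single column of height $2a_0$ built from $a_0$ stacked $A$-dominoes. Hence every column inside $A_i$ ($i<k$) has length $b_i$, and the unique column of $A_k$ has length $2a_0$. Two parity observations do all the work: the two columns occupied by each $A_i$ are consecutive, hence one is odd-indexed and the other even-indexed; and the single column occupied by $A_k$ is odd-indexed, since everything to its left is a union of width-$2$ blocks and thus has even total width (this is the remark recorded just before the lemma). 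I would also record the elementary count that a column of $P$ of length $h$ sitting at an odd-indexed position contains $\lfloor h/2\rfloor$ odd boxes and $\lceil h/2\rceil$ even boxes, while at an even-indexed position it contains $\lceil h/2\rceil$ odd boxes and $\lfloor h/2\rfloor$ even boxes.

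Next I would apply the closed form $F_b(P)=\sum_{2\nmid i}(q_i\od)^2+\sum_{2\mid i}q_i\od(q_i\od-1)$ of Definition \ref{fafbfd}, where $q_i$ is the length of the $i$-th column of $P$ and $q_i\od$ the number of odd boxes in it, summing column by column and hence block by block. For a block $A_i$ with $i<k$: whichever of its two columns is odd-indexed contributes $\lfloor b_i/2\rfloor^2$ and the even-indexed one contributes $\lceil b_i/2\rceil(\lceil b_i/2\rceil-1)$, so $A_i$ contributes $\lfloor b_i/2\rfloor^2+\lceil b_i/2\rceil(\lceil b_i/2\rceil-1)$; a one-line check of the cases $b_i$ even and $b_i$ odd shows this equals $\tfrac12 b_i(b_i-1)$, independently of which of the two columns was odd. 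The block $A_k$ sits at an odd-indexed column and has $\lfloor 2a_0/2\rfloor=a_0$ odd boxes there, contributing $a_0^2$. Adding the block contributions yields $F_b(P)=a_0^2+\tfrac12\sum_i b_i(b_i-1)$.

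The computation for $F_d(P)=\sum_{2\nmid i}q_i\ev(q_i\ev-1)+\sum_{2\mid i}(q_i\ev)^2$ is entirely parallel: each $A_i$ ($i<k$) now contributes $\lceil b_i/2\rceil(\lceil b_i/2\rceil-1)+\lfloor b_i/2\rfloor^2$, which again equals $\tfrac12 b_i(b_i-1)$, whereas $A_k$, lying at an odd column with $\lceil 2a_0/2\rceil=a_0$ even boxes, contributes $a_0(a_0-1)=a_0^2-a_0$; summing gives the second formula. The computation is routine; the only point needing care — and the only place the hypotheses genuinely enter — is the parity bookkeeping, namely that the two columns of each width-$2$ block have opposite parity and that the width-$1$ block $A_k$ always lands at an odd column, which is exactly what makes the $\lfloor\cdot\rfloor$ and $\lceil\cdot\rceil$ terms combine so cleanly. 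I would therefore spell out this first step carefully and present the rest as the short case checks above.
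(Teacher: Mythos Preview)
Your proposal is correct and follows essentially the same approach as the paper: decompose $P$ into the blocks $A_1,\dots,A_k$, use that each width-$2$ block occupies one odd and one even column while $A_k$ occupies a single odd column, and sum the column-formula contributions block by block. The paper's own proof is terser, merely asserting the block contributions $a_0^2$, $a_0^2-a_0$, and $\tfrac12 b_i(b_i-1)$ without writing out the $\lfloor\cdot\rfloor/\lceil\cdot\rceil$ case check that you supply, so your version is in fact more explicit.
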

\begin{proof}
    Recall in Definition \ref{fafbfd}, we have
	\begin{equation*}
		\begin{aligned}
			%F_a(x):&=\sum_{i\geq1}(i-1)p_i=\sum_{i\geq1}\frac{q_i(q_i-1)}{2},\\
			F_b(x):&=\sum_{i\geq1}(i-1)p_i\od=\sum_{2\nmid i}(q_i\od)^2+\sum_{2\mid i}q_i\od(q_i\od-1),\\
			F_d(x):&=\sum_{i\geq1}(i-1)p_i\ev=\sum_{2\nmid i}q_i\ev(q_i\ev-1)+\sum_{2\mid i}(q_i\ev)^2,
		\end{aligned}
	\end{equation*}
where $p=p(x)=(p_1,p_2,\cdots, p_n) $ is the shape of the Young tableau  $P(x)$ obtained by using R-S algorithm.
Thus $F_b(x)$ and $F_d(x)$ only depend on the shape of the Young tableau $P(x)$. So we can similarly define $F_b(T)$ for a Young tableau $T$.

% For a given Young diagram $P=P(x)$ with shape $p=(p_1,\cdots,p_k)$, we still use 
 
    Now  $P$ is a Z-diagram of  type  $(a_0;b_1,\cdots,b_{k-1})$. We can regard it as a Young diagram of shape $p=(p_1,..,p_k)$ where $p^t=(a_0',b_1',b_1',b_2',b_2',..,b_{k-1}',b_{k-1}')$ and $(a_0',b_1',b_2',..,b_{k-1}')$ is the  arrangement of the sequence $(a_0,b_1,b_2,..,b_{k-1})$ in descending order. We use $F_b(P|_Q)$ to denote  the value of the subdiagram  $Q$ in $F_b(P)$. Similarly denote $F_d(P|_Q)$.

    Note that the subdiagram $A_k$ has $a_0$ odd boxes, $a_0$ even boxes, and  always lies in an odd column. Thus,  $F_b(P|_{A_k})=a_0^2$ and $F_d(P|_{A_k})=a_0^2-a_0$.
      
      The subdiagram $A_i~ (i\neq k)$ has $b_i$ odd boxes and $b_i$ even boxes, and  $A_i$  always lies in an odd column and an even column. Thus we have $F_b(P|_{A_i})=\frac{1}{2}b_i(b_i-1)=F_d(P|_{A_i})$.

       Sum up these results,  we have proved the result.
\end{proof}

\begin{corollary}\label{numberev}
  Suppose  $p=(p_1,\cdots,p_N)$ is the shape of the Z-diagram  $P$ of type  $(n_k; n_1,\cdots,n_{k-1})$ so that $${p}^t=(m_1,m_1,m_2,m_2,\cdots,m_{s-1},m_{s-1,},2m_s,m_{s+1},m_{s+1},\cdots,m_{k},m_{k}).$$ Here $(m_1,\cdots,m_k)$ is the  arrangement of the sequence $(n_1,n_2,..,n_{k})$ in descending order and $m_s=n_k$. 
  Then we have $$p\od=(\left\lfloor \frac{m_1}{2} \right\rfloor, \left\lceil \frac{m_1}{2}\right\rceil,\cdots,\left\lfloor \frac{m_{s-1}}{2} \right\rfloor, \left\lceil \frac{m_{s-1}}{2}\right\rceil,m_s,\left\lceil \frac{m_{s+1}}{2}\right\rceil,\left\lfloor \frac{m_{s+1}}{2} \right\rfloor,\left\lceil \frac{m_{k}}{2}\right\rceil,\left\lfloor \frac{m_{k}}{2} \right\rfloor)$$ and
  $$p\ev=(\left\lceil \frac{m_1}{2}\right\rceil,\left\lfloor \frac{m_1}{2} \right\rfloor, \cdots,\left\lceil \frac{m_{s-1}}{2}\right\rceil,\left\lfloor \frac{m_{s-1}}{2} \right\rfloor,  m_s,\left\lfloor \frac{m_{s+1}}{2} \right\rfloor,\left\lceil \frac{m_{s+1}}{2}\right\rceil,\left\lfloor \frac{m_{k}}{2} \right\rfloor,\left\lceil \frac{m_{k}}{2}\right\rceil).$$
\end{corollary}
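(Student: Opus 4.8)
The plan is to read off the column structure of the Z-diagram $P$ straight from Definition \ref{zdigram} together with the stated form of $p^t$, and then count odd and even boxes one column at a time. First I would record a harmless reduction: transposing a Young diagram carries the box $(i,j)$ to $(j,i)$ and so preserves the parity of $i+j$; hence the odd (resp.\ even) boxes counted column by column in $P$ are the same as those counted row by row in the transpose of $P$, whose shape is $p^t$. Since the claimed sequences $p\od$ and $p\ev$ have exactly as many entries as $p^t$ has parts, it is enough to show that the $j$-th column of $P$ carries the number of odd (resp.\ even) boxes recorded by the $j$-th entry of the asserted list.

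The only elementary input I would use is this: a column in position $j$ of height $h$ contains an odd box $(i,j)$ precisely when $i$ and $j$ have opposite parity, so it has $\lfloor h/2\rfloor$ odd and $\lceil h/2\rceil$ even boxes when $j$ is odd, and $\lceil h/2\rceil$ odd and $\lfloor h/2\rfloor$ even boxes when $j$ is even. Next I would spell out the column heights of $P$: by Definition \ref{zdigram}, after sorting the blocks by height and using the hypothesis $p^t=(m_1,m_1,\cdots,m_{s-1},m_{s-1},2m_s,m_{s+1},m_{s+1},\cdots,m_k,m_k)$ with $m_s=n_k$, the columns of $P$ from left to right are two columns of height $m_\ell$ for each $\ell=1,\dots,s-1$ (positions $2\ell-1,2\ell$), then the single column of height $2m_s=2n_k$ coming from the stack $A_k$ of $n_k$ $A$-dominoes (position $2s-1$; it lands in an odd position because every block in front of it has width $2$, which is the remark following Definition \ref{zdigram}), then two columns of height $m_\ell$ for each $\ell=s+1,\dots,k$ (positions $2\ell-2,2\ell-1$).

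Now I would apply the elementary fact column by column. A pair of columns of height $m_\ell$ with $\ell\le s-1$ occupies positions (odd, even) and so contributes $(\lfloor m_\ell/2\rfloor,\lceil m_\ell/2\rceil)$ to $p\od$ and $(\lceil m_\ell/2\rceil,\lfloor m_\ell/2\rfloor)$ to $p\ev$; the lone column $A_k$, of even height $2m_s$ in an odd position, contributes $m_s$ to each; and a pair of columns of height $m_\ell$ with $\ell\ge s+1$ now occupies positions (even, odd) — the parity of the column indices has been flipped by the odd width of $A_k$ — and so contributes $(\lceil m_\ell/2\rceil,\lfloor m_\ell/2\rfloor)$ to $p\od$ and $(\lfloor m_\ell/2\rfloor,\lceil m_\ell/2\rceil)$ to $p\ev$. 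Concatenating these over $\ell=1,\dots,s-1$, then $A_k$, then $\ell=s+1,\dots,k$ yields exactly the two sequences in the statement; the degenerate cases $s=1$, $s=k$, and $n_k=0$ (empty $A_k$) fall out of the same argument with the missing blocks simply absent.

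There is no real obstacle here: it is a bookkeeping computation. The one thing that genuinely needs care — and the only place a rounding error could creep in — is the parity shift of the column indices induced by $A_k$ having odd width; this is precisely what forces the order of $\lfloor\cdot\rfloor$ and $\lceil\cdot\rceil$ inside each pair to reverse as one crosses the central entry $m_s$, which is why the two half-lists in the corollary look like mirror images of each other. I would double-check this shift against the worked Z-diagrams (for instance the $(4;1,3)$ and $(3;3,7,1)$ examples) before writing up, and I would note that the result is logically independent of Lemma \ref{countfbfd}, although feeding the entries just obtained into the formula for $F_b$ (resp.\ $F_d$) in Definition \ref{fafbfd} recovers the values stated there.
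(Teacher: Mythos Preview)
Your argument is correct, and the paper gives no proof of this corollary at all --- it is simply stated after Lemma~\ref{countfbfd} and left to the reader --- so there is no ``paper's approach'' to compare against beyond the implicit claim that it follows directly from Definition~\ref{zdigram}.

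One point worth flagging, which you handled correctly but only implicitly: the sequences in the corollary have $2k-1$ entries, matching the parts of $p^t$, whereas $p\od$ and $p\ev$ as introduced earlier in the paper are \emph{row}-indexed (so would have $N=m_1$ entries). You were right to read the displayed formulas as column-wise counts in $P$ --- equivalently $q\od$ and $q\ev$ for $q=p^t$ in the paper's own notation from Definition~\ref{fafbfd} --- and your transposition remark is exactly the justification needed. The later worked examples in the paper (e.g.\ $p\od=(2,2)$ for the $(1;2,1)$ diagram) use the row convention, so the corollary's notation is indeed loose; your reading is the only one consistent with the stated formulas. The parity shift across the single $A$-domino column, which reverses the $\lfloor\cdot\rfloor/\lceil\cdot\rceil$ order past position $2s-1$, is the sole nontrivial step and you isolated it correctly.
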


\begin{lemma}
 Suppose   $P_1$ is a Z-diagram  of  type $(a_0; b_1,\cdots,b_{k-1})$ and    $P_2$ is a Z-diagram of type $(a_0; c_1,\cdots,c_{k-1})$.
Then we have $P_1=P_2$ if and only if there exists some $ \sigma \in S_{k-1}$ such that $c_i=\sigma(b_i)$ for $1\leq i\leq k-1$.
    
\end{lemma}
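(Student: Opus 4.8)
The plan is to read off the shape of a Z-diagram directly from Definition \ref{zdigram} and thereby reduce the statement to an elementary fact about multisets. First I would record that a Z-diagram of type $(a_0;b_1,\dots,b_{k-1})$ is, up to reordering its blocks, assembled from the $a_0$ vertical $A$-dominoes stacked into one column of height $2a_0$ together with, for each $i$, the $b_i$ horizontal $B$-dominoes stacked into a $b_i\times 2$ rectangle, i.e. two columns of height $b_i$. Since steps (3)--(4) of the construction merely sort these blocks by height and left-justify them into a single Young diagram, the multiset of column lengths of the resulting partition is
\[
\{\,2a_0\,\}\uplus\{\,b_i,\,b_i : 1\le i\le k-1\,\},
\]
exactly as displayed for $p^t$ in Corollary \ref{numberev}; in particular the order chosen in step (3) among blocks of equal height is immaterial, since a height-$h$ column of width $1$ and one of width $2$ together contribute the columns $\{h,h,h\}$ in either order.

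This multiset depends only on the \emph{multiset} $\{b_1,\dots,b_{k-1}\}$, which gives the ``if'' direction at once: if $(c_1,\dots,c_{k-1})$ is a permutation of $(b_1,\dots,b_{k-1})$ then $P_1$ and $P_2$ are built from the same collection of blocks, hence $P_1=P_2$. For the converse I would argue that the type of a Z-diagram can be recovered from the diagram itself once $a_0$ is known. If $P_1=P_2$, then their transpose partitions agree, so $\{2a_0\}\uplus\{b_i,b_i\}=\{2a_0\}\uplus\{c_i,c_i\}$ as multisets; call this common multiset $M$. As the $b_i$ and $c_i$ are positive, the multiplicity $m_M(v)$ of a value $v$ in $M$ equals $2\,\#\{i:b_i=v\}$ when $v\ne 2a_0$ and $1+2\,\#\{i:b_i=2a_0\}$ when $v=2a_0$, with the same formulas for $c$ in place of $b$; hence $\#\{i:b_i=v\}=\lfloor m_M(v)/2\rfloor=\#\{i:c_i=v\}$ for every $v$, so $\{b_i\}$ and $\{c_i\}$ coincide as multisets, i.e. there is some $\sigma\in S_{k-1}$ sending $(b_i)$ to $(c_i)$.

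The only point requiring care is the possible coincidence $b_i=2a_0$ (and the degenerate case $a_0=0$), which is precisely why the ``$+1$'' correction at the slot $v=2a_0$ is needed in extracting the $b_i$; and the observation that, in the assembly of the diagram, the relative order of equal-height blocks (including a width-$1$ block versus a width-$2$ block) does not change the column lengths. Apart from this bookkeeping there is no real obstacle, as everything is a direct reading of the construction in Definition \ref{zdigram} together with the fact that a partition is determined by, and determines, the multiset of its column lengths.
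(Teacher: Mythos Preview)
Your proposal is correct and follows essentially the same approach as the paper, namely reading off the column-length multiset directly from Definition~\ref{zdigram}; the paper's own proof is a one-line assertion that the result is ``obvious'' from the construction. Your version simply supplies the bookkeeping the paper omits, in particular the careful treatment of the coincidence $b_i=2a_0$ and the degenerate case $a_0=0$.
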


\begin{proof}
   The case for  $k=2$ is trivial. From the construction of Z-diagram, $(c_1,\cdots,c_{k-1})$ is an arrangement of $(b_1,\cdots,b_{k-1})$, thus it is obvious to prove the result.
\end{proof}

Suppose    $\Phi=B_n/C_n/D_n$ and $\lambda$ is an integral weight. Then from \cite{BMW}, we know that  the shape $p=p(\lambda^-)$ of the Young tableau $P(\lambda^-)$ gives a partition of domino type. By using H-algorithm, we can get a special partition from this Young tableau $P(\lambda^-)$.

% \begin{lemma}
%   Let  ${\bf p}$ be a partition of domino type whose Young diagram $P$ has $2n$ boxes. Then  $P$ has $n$ odd boxes and $n$ even boxes.
% \end{lemma}
% \begin{proof}
%     $n=1$,trivial. By induction, it is obvious to prove.
% \end{proof}

\begin{lemma}\label{min}
    Suppose  $A$ is a Young diagram of domino type with two columns and  $c_i(A)$ (or $c_i$) is the number of boxes in the $i$-th column of $A$. Then $F_b(A)$ is a decreasing function of $c_2(A)$ when $c_1(A)+c_2(A)=2m$ is a fixed number. It will take the minimal value $\frac{1}{2}m(m-1)$ if and only if $c_2(A)=c_1(A)$ or $c_2(A)=c_1(A)-2$.
 \end{lemma}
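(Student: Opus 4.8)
The plan is to put $F_b(A)$ into closed form, which collapses the whole statement to the elementary analysis of one quadratic.

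Since $A$ is a Young diagram with at most two columns and $c_1(A)+c_2(A)=2m$, its two column lengths have the same parity, so $A$ is the partition $[2^{c_2},1^{c_1-c_2}]$ with $c_1-c_2$ even (in particular $A$ is automatically of domino type). Its transpose is the two-row shape $q=A^t=(c_1,c_2)$, so in Definition~\ref{fafbfd} only the indices $i=1,2$ contribute and
\[
	F_b(A)=(q_1\od)^2+q_2\od\,(q_2\od-1),\qquad
	q_1\od=\left\lfloor \tfrac{c_1}{2}\right\rfloor,\qquad q_2\od=\left\lceil \tfrac{c_2}{2}\right\rceil .
\]
The key point is the identity $q_1\od+q_2\od=m$, which one checks by separating the cases $c_1,c_2$ both even and both odd. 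Writing $s:=q_2\od=\lceil c_2/2\rceil$, this turns the formula into a function of a single integer variable:
\[
	F_b(A)=(m-s)^2+s(s-1)=2s^2-(2m+1)s+m^2=:h(s).
\]

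Next I would analyse $h$ on the range of $s$. Under the constraints $c_1\ge c_2\ge0$ and $c_1+c_2=2m$, the variable $c_2$ runs over $\{0,1,\dots,m\}$, so $s=\lceil c_2/2\rceil$ runs over $\{0,1,\dots,\lceil m/2\rceil\}$ and is weakly increasing in $c_2$. From $h(s)-h(s+1)=2m-1-4s$, which is strictly positive for every $s\le\lceil m/2\rceil-1$ (a one-line parity check in each of the cases $m$ even, $m$ odd), one sees that $h$ is strictly decreasing on the whole range of $s$; this already gives that $F_b(A)$ is a non-increasing function of $c_2$. Hence the minimum is attained exactly when $s$ is maximal, $s=\lceil m/2\rceil$, and substituting back gives $h(\lceil m/2\rceil)=\tfrac12 m(m-1)$ (again a short computation in each parity), the claimed minimal value. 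Finally I would translate $s=\lceil m/2\rceil$ back into a condition on $c_1,c_2$: solving $\lceil c_2/2\rceil=\lceil m/2\rceil$ with $0\le c_2\le m$ forces $c_2$ to be as large as possible, namely $(c_1,c_2)=(m,m)$ or $(m+1,m-1)$, that is, $c_2=c_1$ or $c_2=c_1-2$.

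The argument is short once the reduction to $h(s)$ is in place; the one place that calls for care is the parity bookkeeping with the floors and ceilings --- in the identity $q_1\od+q_2\od=m$, in the monotonicity estimate $h(s)-h(s+1)=2m-1-4s>0$, and especially in reading off exactly which $c_2$ realise the extremal value $s=\lceil m/2\rceil$. This last step is sensitive to whether the two columns of $A$ occupy an odd or an even pair of columns of the ambient diagram, which is what dictates working with $q\od$ rather than $q\ev$ in the formula above; isolating this point cleanly (and, if needed, recording the symmetric statement for $F_d$, which is handled by the same computation with $q\ev$ in place of $q\od$) is the only real obstacle.
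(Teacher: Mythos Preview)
Your proposal is correct and follows essentially the same route as the paper: both write $F_b(A)=(c_1\od)^2+c_2\od(c_2\od-1)$ and establish that this is (weakly) decreasing in $c_2$ by a discrete-derivative argument. Your extra observation $q_1\od+q_2\od=m$, which collapses everything to the single quadratic $h(s)=2s^2-(2m+1)s+m^2$, is a clean streamlining that the paper does not record --- there the monotonicity is checked by moving one box at a time and splitting into the two parity cases for $(c_1,c_2)$.
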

\begin{proof}
  Recall that $F_b(x)=\sum_{2\nmid i}(q_i\od)^2+\sum_{2\mid i}q_i\od(q_i\od-1)$. Since $A$ is a Young diagram  with two columns, we  may write  $$F_b(A)=(c_1\od)^2+c_2\od(c_2\od-1):=F(c_1, c_2)=F(n-c_2,c_2)=f(c_2).$$ When we move one box from the first column to the second column of $A$, we will get a new diagram $\bar{A}$. Thus $\bar{c}_1=c_1-1$ and $\bar{c}_2=c_2+1$. Since $A$ is a Young diagram of domino type, there are two possibilities as follows:
  \begin{itemize}
      \item $\bar{c}_1\od=c_1\od$, $\bar{c}_2\od=c_2\od$;
     \item $\bar{c}_1\od=c_1\od-1$, $\bar{c}_2\od=c_2\od+1$.
  \end{itemize}
In the second case, we have $c_1\od\geq c_2\od+1$. Thus we have \begin{align*}
    f(\bar{c}_2)&=(\bar{c}_1\od)^2+\bar{c}_2\od(\bar{c}_2\od-1)\\
    &=(c_1\od-1)^2+({c}_2\od+1)c_2\od\\
    &=(c_1\od)^2+c_2\od(c_2\od-1)+(-2c_1\od+1+2c_2\od)\\
    &\leq f(c_2)+(-2c_2\od-2+1+2c_2\od)\\
    &=f(c_2)-1\\
    &< f(c_2).
\end{align*} 

This finishes the proof.

\end{proof}

Suppose $\mathfrak{g}=\mathfrak{sl}(n,\mathbb{C})$. Let $\mathfrak{p}$ be a standard parabolic subalgebra of type ${(n_1, n_2,\cdots,}$ ${n_k)}$. Suppose $L(\lambda)\in \mathcal{O}^{\mathfrak{p}}$ is an integral  highest weight module. Write $c_i$ for the number of  entries in the $ i $-th column of $P(\lambda)$.
We say  $\lambda$ is a {\it maximal standard weight of type $(n_1, n_2,\cdots,n_k)$}  if  the sequence $(c_1,..,c_k)$ is the arrangement of the sequence $(n_1, n_2,\cdots,n_k)$ in descending order. When  $\mathfrak{g}=\mathfrak{so}(2n+1,\mathbb{C}) $ or  $\mathfrak{sp}(n,\mathbb{C})$, we can consider $\lambda^-$ as a weight corresponding to a   standard parabolic subalgebra of  type $(n_1, n_2,\cdots,n_{k-1},2n_k,n_{k-1},\cdots,n_1)$
(if $n_k>0$) or type $(n_1, n_2,\cdots,n_{k-1},0,0,n_{k-1},\cdots,n_1)$ (if $n_k=0$) in $\mathfrak{g}=\mathfrak{sl}(2n,\mathbb{C})$. Similarly we say $\lambda$ is a {\it maximal standard weight of type $(n_1, n_2,\cdots,n_k)$} if $\lambda^-$ is a maximal standard weight of  type $(n_1, n_2,\cdots,n_{k-1},2n_k,n_{k-1},\cdots,n_1)$ (if $n_k>0$)
 or type $(n_1, n_2,\cdots,n_{k-1},0,0,   n_{k-1},\cdots,n_1)$ (if $n_k=0$).

\begin{lemma}[Uniqueness of Types $B$ and $C$]\label{UniquenessBC}
Suppose  $\mathfrak{g}=\mathfrak{so}(2n+1,\mathbb{C}) $ or $\mathfrak{sp}(n,\mathbb{C})$. Let $\frp$ be a  standard parabolic subalgebra of type $(n_1, n_2,\cdots,n_k)$.
Suppose $L(\lambda)\in \mathcal{O}^{\mathfrak{p}}$ is an integral  highest weight module.  The number $F_b(\lambda^-)$ will take the minimal value
$\frac{1}{2}\sum_{j=1}^{k-1}n_j(n_j-1)+n_k^2$ if and only if $\lambda$ is a  maximal standard weight of type $(n_1, n_2,\cdots,n_k)$. In other words, $P(\lambda^-)\od$ is unique when $F_b(\lambda^-)$ takes the minimal value.
\end{lemma}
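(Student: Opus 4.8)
The plan is to recast the statement as a minimization problem for the column lengths of $P(\lambda^-)$, and to solve that problem using Lemmas \ref{decreasing}, \ref{decreasingb} and \ref{min}. First I would extract the combinatorial consequences of $L(\lambda)\in\mathcal O^{\mathfrak p}$. Since $\lambda-\rho$ must be dominant for the Levi $\mathfrak l$, the coordinates of $\lambda$ are strictly decreasing inside each of the $A_{n_i-1}$-blocks ($1\le i\le k-1$), and $\lambda_n>0$ because of the last block of type $B_{n_k}$ (resp.\ $C_{n_k}$). Hence $\lambda^-=(\lambda_1,\dots,\lambda_n,-\lambda_n,\dots,-\lambda_1)$ is a concatenation of strictly decreasing runs whose lengths form the sequence $\mathbf a=(n_1,\dots,n_{k-1},2n_k,n_{k-1},\dots,n_1)$ (the middle $2n_k$-run being $(\lambda_{p_{k-1}+1},\dots,\lambda_n,-\lambda_n,\dots,-\lambda_{p_{k-1}+1})$, and absent when $n_k=0$). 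Using the standard property of Robinson--Schensted insertion that the first $j$ columns of $P(\lambda^-)$ contain as many boxes as the largest union of $j$ strictly decreasing subsequences of $\lambda^-$ — and comparing with the union of the $j$ longest runs — the column lengths $c=(c_1\ge c_2\ge\cdots)$ of $P(\lambda^-)$ satisfy $\sum_i c_i=2n$ and dominate the decreasing rearrangement $\mathbf a^{\downarrow}$ of $\mathbf a$; in particular there are at most $2k-1$ columns. Note that $\mathbf a^{\downarrow}$ is exactly the column sequence of the Z-diagram of type $(n_k;n_1,\dots,n_{k-1})$ (Definition \ref{zdigram}).

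By Definition \ref{fafbfd}, $F_b(\lambda^-)$ depends only on $P(\lambda^-)\od$ and equals $\sum_{i\ \mathrm{odd}}\lfloor c_i/2\rfloor^2+\sum_{i\ \mathrm{even}}\lceil c_i/2\rceil(\lceil c_i/2\rceil-1)$, and by Lemma \ref{countfbfd} its value at $c=\mathbf a^{\downarrow}$ is $n_k^2+\tfrac12\sum_{j=1}^{k-1}n_j(n_j-1)$. So the task is to show that, over all domino-type $c$ with $\sum_i c_i=2n$ dominating $\mathbf a^{\downarrow}$, this expression is minimized precisely at $c=\mathbf a^{\downarrow}$ — equivalently (a partition being determined by its columns) precisely when $P(\lambda^-)$ is that Z-diagram, i.e.\ $\lambda$ is a maximal standard weight of type $(n_1,\dots,n_k)$. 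To do this I would group the columns of $P(\lambda^-)$ into $k-1$ two-column blocks together with the single column playing the role of the $A$-domino stack (which, as noted after Definition \ref{zdigram}, occupies an odd column), apply Lemma \ref{min} to each two-column block — a two-column domino-type diagram of fixed total height $2m$ contributes at least $\tfrac12 m(m-1)$, with equality only when the two column heights agree or differ by $2$ — and retain the contribution $h^2$ of the distinguished column of height $2h$. Combined with the dominance of $(c_i)$ over $\mathbf a^{\downarrow}$, this reduces the minimization to one for a function of the form $\tfrac12\sum_{t\ne s}x_t(x_t-1)+x_s^2$ in the block half-heights $x_t$, on a domain of the type appearing in Lemma \ref{decreasing} (when the rearranged block sequence is genuinely decreasing) or in Lemma \ref{decreasingb} (when the doubled part $2n_k$ forces it to be only ``almost descending of index $s$'').

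Lemma \ref{decreasing} or \ref{decreasingb} then yields that this minimum equals $n_k^2+\tfrac12\sum_{j=1}^{k-1}n_j(n_j-1)$ and is attained if and only if $x_t=m_t$ for all $t$; tracing this back through the grouping, and using $c_1\ge c_2\ge\cdots$, forces $c=\mathbf a^{\downarrow}$. Hence $F_b(\lambda^-)$ attains the value $n_k^2+\tfrac12\sum_{j=1}^{k-1}n_j(n_j-1)$ exactly when $P(\lambda^-)$ is the Z-diagram of type $(n_k;n_1,\dots,n_{k-1})$, i.e.\ exactly when $\lambda$ is a maximal standard weight; the reverse implication and the existence of such $\lambda$ are immediate from Lemma \ref{countfbfd} and an explicit choice of $\lambda$. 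Since $P(\lambda^-)$ — a fortiori $P(\lambda^-)\od$ — is then uniquely determined, the last sentence of the lemma follows. (The lower bound $F_b(\lambda^-)\ge n^2-\dim\mathfrak u=|\Phi^+_I|$ can alternatively be read off from $L(\lambda)$ being a quotient of $M(\lambda)$.)

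The step I expect to be delicate is the reduction in the second paragraph: fitting the floor/ceiling expression for $F_b$, whose behaviour depends on the parity of the column index, to the clean quadratic forms of Lemmas \ref{decreasing} and \ref{decreasingb} — in particular, dealing with parity mismatches between an arbitrary domino-type $c$ and the paired structure of the Z-diagram, justifying that an optimal configuration may be taken with the distinguished $2n_k$-column in an odd position, and upgrading Lemma \ref{min} (which only pins a two-column block down up to the ``equal-or-differ-by-$2$'' ambiguity) to honest uniqueness by exploiting the dominance and monotonicity constraints.
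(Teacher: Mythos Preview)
Your overall plan---reduce to a minimization on the column data and feed it into Lemmas \ref{decreasing}, \ref{decreasingb}, \ref{min}---is the right shape, and your use of the Greene/RSK dominance property to get $c\succeq\mathbf a^{\downarrow}$ is a clean substitute for part of the paper's argument. But the paper does \emph{not} go through column heights. It inserts the decreasing blocks $str_{j_i}^{\pm}$ one $m_i$ at a time, calls the newly created columns at stage $i$ the ``Area $S_i$'', and sets $x_i$ equal to the number of \emph{odd boxes} of the final $P(\lambda^-)$ lying in Area $S_i$. The dominance inequalities $\sum_{j\le t}x_j\ge\sum_{j\le t}m_j$ are then read off from the insertion process directly for odd-box counts, and Lemmas \ref{min}, \ref{decreasingb} are applied to these $x_i$. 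This sidesteps precisely the parity bookkeeping you flag as delicate: there is no floor/ceiling translation step at all.

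More importantly, the target you set yourself---forcing $c=\mathbf a^{\downarrow}$, i.e.\ that $P(\lambda^-)$ \emph{equals} the Z-diagram---is too strong, and your ``delicate step'' cannot be closed. Take $\mathfrak g=\mathfrak{sp}(2,\mathbb C)$, $\mathfrak p$ of type $(2,0)$, and $\lambda=(2,-1)$. Then $L(\lambda)\in\mathcal O^{\mathfrak p}$, $\lambda^-=(2,-1,1,-2)$, and RSK gives $P(\lambda^-)$ of shape $(2,1,1)$, with columns $(3,1)\ne(2,2)=\mathbf a^{\downarrow}$; yet $F_b(\lambda^-)=1=\tfrac12\cdot2\cdot1+0^2$ is minimal. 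The equality case of Lemma \ref{min} already tells you this can happen: $(m,m)$ and $(m+1,m-1)$ are both minimizers for a two-column block. What \emph{is} forced is that the odd boxes of $P(\lambda^-)$ agree with those of the Z-diagram (in the example: $(1,2)$ and $(2,1)$ for both shapes), and that is exactly what the paper's argument extracts---its conclusion is ``the numbers of odd boxes in different areas are fixed'', hence $P(\lambda^-)\od$ is unique. So to salvage your approach you must retarget it at the odd-box diagram rather than at the full column vector; the Greene-dominance step is still useful, but the objects you minimize over should be odd-box counts per block, as in the paper, not column heights.
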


\begin{proof}
 Since  $L(\lambda)\in \mathcal{O}^{\mathfrak{p}}$, in $\lambda^-$, for each $n_i$ we have two decreasing subsequences $$(\lambda_{n_{i-1}+1}, \lambda_{n_{i-1}+2},\cdots,\lambda_{n_{i-1}+n_i})$$  which is denoted by $str_i^+$, and $$(-\lambda_{n_{i-1}+n_i}, -\lambda_{n_{i-1}+n_i-1},\cdots,-\lambda_{n_{i-1}+1})$$ which is denoted by $str_i^-$.
 
    We sort $(n_1, n_2,\cdots,n_k)$  `almost' descending to get a sequence $(m_1, m_2,\cdots,m_k)$ where $m_s=n_k$, $m_{s-1}-1\geq 2m_s\geq m_{s+1} $ and $ m_i\geq m_{i+1}$ (if $i\neq s, s-1$). Obviously, each $m_i$ is equal to some $n_j$. We call it $n_{j_i}$. And it is corresponding to two decreasing subsequences $str_{j_i}^+$ and $str_{j_i}^-$. If $n_{j_s}=n_k> 0$, the subsequence $(str_{j_s}^+, str_{j_s}^-)$ will be a decreasing subsequence.
    
    Now we construct $P(\lambda^-)$, and we only focus on odd boxes.
    
    {\bf Step 1.} We construct a Young tableau $Y_1$ corresponding to $m_1=n_{j_1}$. Actually we choose the subsequence $(str_{j_1}^+, str_{j_1}^-)$ to construct the Young tableau $Y_1$.
   Then we find that there are $m_1$ odd boxes in $Y_1$. If $s=1$, the Young tableau $Y_1$ contains only  one column. If $s\neq 1$, the Young tableau $Y_1$ has at most  two columns.
    There are $m_1$ odd boxes in $Y_1$. We denote the columns corresponding to   $Y_1$ by Area $S_1$.

    {\bf Step 2.} We continue to construct the Young tableau with boxes corresponding to $m_2=n_{j_2}$. We insert $str_{j_2}^+$ and $str_{j_2}^-$ into the subsequence $(str_{j_1}^+, str_{j_1}^-)$ according to their original order in $\lambda^-$, then  the new subsequence  will  generate a new Young tableau $Y_2$. And we find that there are $m_1+m_2$ odd boxes in $Y_2$. Comparing to $Y_1$, we get some new columns in $Y_2$, and we denote them by Area $S_2$. With some new elements coming in, there may be some more odd boxes in the new Area $S_1$.

    {\bf Step 3.} Similarly, we continue these steps to $m_k$. Then We have constructed $P(\lambda^-)$.

    We use $x_i$ to denote the number of odd boxes in the   Area $S_i$ of  $P(\lambda^-)$. Note that the Area $S_s$ contains only one column when it is not empty, and the other Areas contain two columns. By the steps of  constructing $P(\lambda^-)$, we find that
   \begin{align*}			
        & x_1 \geq m_1, x_1+x_2\geq m_1+m_2,\cdots, 
        	\sum_{j=1}^{k-1}x_j\geq \sum_{j=1}^{k-1}m_j,\\
	&\sum_{j=1}^{k}x_j= \sum_{j=1}^{k}m_j, x_i\geq x_{i+1}(i\neq s, s-1), 
  x_{s-1}-1\geq 2x_s\geq x_{s+1}.
		\end{align*}
	
By Lemma \ref{min}, the Area $S_i$ ($i\neq s$) will contain two columns if we want the value of $F_b(\lambda^-)$ to be as small as possible.

    Then by Lemma \ref{decreasingb} and Lemma \ref{min}, $F_b(\lambda^-)$ will take the minimal value
$$\frac{1}{2}\sum_{j=1}^{k-1}n_j(n_j-1)+n_k^2$$ if and only if $x_i=m_i$ for all $1\leq i\leq k$.
Thus, the numbers of  odd boxes in different areas are fixed. 

This finishes the proof.

\end{proof}

%     Based on Lemma \ref{UniquenessBC}, the number $F_b(\lambda^-)$ will take the minimal value
% $$\frac{1}{2}\sum\limits_{1\leq j \leq k-1}n_j(n_j-1)+n_k^2$$ when $T(\lambda^-)^{od}$ is unique. However ,we have a question about whether $F_b(\lambda^-)$ could take the minimal value.

% \begin{lemma}[Existence of Type $B/C$]\label{ExistenceBC}
%      For arbitrary $(n_1, n_2,\cdots,n_k)$, there exist an integral standard weight $\lambda$ of size $(n_1, n_2,\cdots,n_k)$ in $\mathfrak{h}^*$ such that $F_b(\lambda^-)$ takes the minimal value.
% \end{lemma}
% \begin{proof}
    
% \end{proof}

    In Lemma \ref{UniquenessBC}, we constructed  a Young tableau $P(\lambda^-)$ whose odd Young tableau $P(\lambda^-)\od$ is unique when $F_b(\lambda^-)$  takes the minimal value. In fact we can construct a Z-diagram which has same odd boxes with $P(\lambda^-)\od$.
\begin{theorem}\label{Z-diagram of odd boxes}
  Suppose  $\mathfrak{g}=\mathfrak{so}(2n+1,\mathbb{C}) $ or $\mathfrak{sp}(n,\mathbb{C})$. Let $\frp$ be a  standard parabolic subalgebra of type $(n_1, n_2,\cdots,n_k)$.
Suppose $L(\lambda)\in \mathcal{O}^{\mathfrak{p}}$ is an integral  highest weight module.  The Z-diagram $ P$ of type $(n_k; n_1, n_2,\cdots, n_{k-1})$ has the same odd boxes as the odd Young tableau  $P(\lambda^-)\od$ when $F_b(\lambda^-)$ takes the minimal value.
\end{theorem}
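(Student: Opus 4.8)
The plan is to derive the theorem directly from Lemma~\ref{UniquenessBC} by matching the column lengths of $P(\lambda^-)$ with those of the Z-diagram. First I would use Lemma~\ref{UniquenessBC}: if $F_b(\lambda^-)$ attains its minimal value $\frac{1}{2}\sum_{j=1}^{k-1}n_j(n_j-1)+n_k^2$, then $\lambda$ is a maximal standard weight of type $(n_1,\dots,n_k)$. By the definition of this notion, $\lambda^-$ is then a maximal standard weight of type $(n_1,\dots,n_{k-1},2n_k,n_{k-1},\dots,n_1)$ (or of type $(n_1,\dots,n_{k-1},0,0,n_{k-1},\dots,n_1)$ when $n_k=0$) for $\mathfrak{sl}(2n,\mathbb{C})$, so the sequence of column lengths of $P(\lambda^-)$ is the rearrangement in weakly decreasing order of the multiset $\{\,n_1,n_1,\dots,n_{k-1},n_{k-1},2n_k\,\}$. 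Since $P(\lambda^-)\od$ is the same for every such $\lambda$ (again Lemma~\ref{UniquenessBC}), it suffices to carry this out for one maximal standard weight.

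Next I would compute that rearrangement explicitly. Let $(m_1,\dots,m_k)$ be the `almost' descending ordering of $(n_1,\dots,n_k)$ of index $s$ with $m_s=n_k$, exactly as in the proof of Lemma~\ref{UniquenessBC}; then the above multiset is $\{\,m_i,m_i:i\ne s\,\}\cup\{2m_s\}$, and the inequalities $m_1\ge\dots\ge m_{s-1}\ge 2m_s\ge m_{s+1}\ge\dots\ge m_k$ (coming from $m_{s-1}-1\ge 2m_s\ge m_{s+1}$) show that inserting $2m_s$ into the doubled sorted list of the $m_i$ with $i\ne s$ places it between $m_{s-1},m_{s-1}$ and $m_{s+1},m_{s+1}$. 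Hence that rearrangement equals $(m_1,m_1,\dots,m_{s-1},m_{s-1},2m_s,m_{s+1},m_{s+1},\dots,m_k,m_k)$, which by Corollary~\ref{numberev} is precisely $p^t$ for the Z-diagram $P$ of type $(n_k;n_1,\dots,n_{k-1})$. Since a Young diagram is recovered from its sequence of column lengths, $P(\lambda^-)=P$ as Young diagrams; in particular they have the same odd boxes, which is the assertion of the theorem. (As a byproduct this recovers, consistently with Lemma~\ref{countfbfd}, that $P$ realizes the minimal value of $F_b$.)

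The genuine content here is packaged into Lemma~\ref{UniquenessBC}: that the minimum of $F_b(\lambda^-)$ over $\mathcal{O}^{\mathfrak{p}}$ is attained exactly on maximal standard weights and rigidifies the odd part of $P(\lambda^-)$; once that is granted, the present theorem is a bookkeeping of combinatorial data. Accordingly, the only point requiring care is the last step: verifying that $2n_k$ falls into the slot dictated by $m_{s-1}-1\ge 2n_k\ge m_{s+1}$ so that no resorting of the remaining parts occurs, matching this against the statement of Corollary~\ref{numberev}, and separately handling the degenerate case $n_k=0$ in which the column of $A$-dominoes in the Z-diagram is empty. I expect this to be the only mildly delicate point; no further representation-theoretic obstacle is anticipated.
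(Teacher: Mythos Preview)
Your argument is correct and leans on the same key input as the paper, namely Lemma~\ref{UniquenessBC}, but you finish the matching step differently. The paper's proof is the two-liner: by Lemma~\ref{countfbfd} one has $F_b(P)=n_k^2+\tfrac12\sum_{j<k}n_j(n_j-1)=F_b(\lambda^-)$, and then the uniqueness clause of Lemma~\ref{UniquenessBC} forces $P\od=P(\lambda^-)\od$. You instead invoke the \emph{characterization} clause of Lemma~\ref{UniquenessBC} (minimal $F_b$ $\Rightarrow$ maximal standard weight), read off the column lengths of $P(\lambda^-)$ from the definition of maximal standard weight, and match them against the Z-diagram via Corollary~\ref{numberev}. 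That yields the stronger conclusion $P(\lambda^-)=P$ as shapes (not just odd parts), and in effect supplies the implicit step in the paper's argument that $P$ actually arises as the shape of some $P(\lambda^-)$ with $\lambda$ maximal standard. Both routes are short once Lemma~\ref{UniquenessBC} is in hand; yours is a bit more explicit, the paper's a bit terser.
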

\begin{proof}
    From Lemma \ref{countfbfd}, we know $F_b(P)=F_b(\lambda^-)$. Thus the result follows from Lemma \ref{UniquenessBC} since $P(\lambda^-)\od$ is unique.
\end{proof}

Now we have proved  the main Theorem \ref{mainBC} for types $B$ and $C$.

%%%%%%%%%%%%%%%%%%%%%%%%%%%%%%%%%%%%%%%%%%%%%%%%%%%%%%%
\begin{example}
 Let   $\mathfrak{g}=\mathfrak{so}(9,\mathbb{C})$.  Suppose $\Delta\setminus I=\{\alpha_{2},\alpha_{3}\}$. Then the corresponding   parabolic subalgebra 
  $\mathfrak{p}_I$ is a standard parabolic subalgebra of type  $(2,1,1)$. By Theorem \ref{mainBC}, a simple integral highest weight  module $L(\lambda)$  in $\mathcal{O}^{\mathfrak{p}}$  is socular  if and only if $P(\lambda^-)$ has the same odd boxes as the following Z-diagram $P$ of type  $(1;2,1)$:  
\begin{center}
 \begin{tabular}{l}
\begin{tikzpicture}[x=0.38cm,y=0.38cm]
\draw[-](0,0) -- (3,0);
\draw[-](0,1) -- (2,1);
\draw[-](0,2) -- (5,2);
\draw[-](2,0) -- (2,2);
\draw[-](0,0) -- (0,2);
\draw[-](3,1) -- (5,1);
\draw[-](3,0) -- (3,2);
\draw[-](5,1) -- (5,2);

\node  at (1,0.5) {\small{B}};
\node  at (1,1.5) {\small{B}};

\node  at (2.5,1) {\small{A}};

\node  at (4,1.5) {\small{B}};

\end{tikzpicture}.
\end{tabular}

% \begin{tikzpicture}[scale=\domscale-0.1,baseline=-15pt]
% \vdom{2}{0}{A}

% \hdom{0}{0}{B}
% \hdom{0}{1}{B}

% \hdom{3}{0}{B}

% \end{tikzpicture}. 
\end{center}

The shape of $P(\lambda^-)\od$ is $p(\lambda^-)\od=p\od=(2,2)$, where $p$ is the shape of the Z-diagram $P$.

Now  suppose $\lambda=(-5,-6,-4,2)\in \mathfrak{h}^*$. Then $L(\lambda)$  in $\mathcal{O}^{\mathfrak{p}}$  is socular. We can check that $P(\lambda^-)$ has the same odd boxes as the  Z-diagram $P$ of type  $(1; 2, 1)$. 
Actually,  since $\lambda^-=(-5,-6,-4,2, -2,4,6,5)$ and from the R-S algorithm, we have

\begin{align*}
 &{\begin{tikzpicture}[scale=\domscale+0.1,baseline=-20pt]
			\hobox{0}{0}{-6}
			\hobox{0}{1}{-5}
			
	\end{tikzpicture}}\to
	{\begin{tikzpicture}[scale=\domscale+0.1,baseline=-20pt]
			\hobox{0}{0}{-6}
			\hobox{0}{1}{-5}
            \hobox{1}{0}{-4}
            \hobox{2}{0}{2}
           
	\end{tikzpicture}}\to
	{\begin{tikzpicture}[scale=\domscale+0.1,baseline=-20pt]
			\hobox{0}{0}{-6}
			\hobox{0}{1}{-5}
            \hobox{1}{0}{-4}
            \hobox{1}{1}{2}
            \hobox{2}{0}{-2}
           
	\end{tikzpicture}}\to
	{\begin{tikzpicture}[scale=\domscale+0.1,baseline=-20pt]
		\hobox{0}{0}{-6}
			\hobox{0}{1}{-5}
            \hobox{1}{0}{-4}
            \hobox{1}{1}{2}
            \hobox{2}{0}{-2}
             \hobox{3}{0}{4}
              \hobox{4}{0}{6}
	\end{tikzpicture}}\\
 &\to
	{\begin{tikzpicture}[scale=\domscale+0.1,baseline=-20pt]
		\hobox{0}{0}{-6}
			\hobox{0}{1}{-5}
            \hobox{1}{0}{-4}
            \hobox{1}{1}{2}
            \hobox{2}{1}{6}
            \hobox{2}{0}{-2}
             \hobox{3}{0}{4}
              \hobox{4}{0}{5}
	\end{tikzpicture}}
  =P(\lambda^-).
	\end{align*}

\end{example}

\section{Proof of Theorem \ref{mainD}}\label{proofd}
In this section, let $\mathfrak{g}=\mathfrak{so}(2n,\mathbb{C})$.
Suppose $\mathfrak{p}$ is a standard parabolic subalgebra of type $(n_1, n_2,\cdots,n_k)$ ($n_k\neq 1$).
Based on Proposition \ref{thm2}, the maximal Gelfand-Kirillov dimension of simple modules in $\mathcal{O}^{\mathfrak{p}}$ will be $$d_m(\mathfrak{p})=n^2-n-(\frac{1}{2}\sum_{j=1}^{k-1}n_j(n_j-1)+n_k^2-n_k)=n^2-n-F_d(\lambda^-).$$

So a simple module $L(\lambda)$ in $\mathcal{O}^{\mathfrak{p}}$ is socular if and only if
\[ F_d(\lambda^-)=\frac{1}{2}\sum_{j=1}^{k-1}n_j(n_j-1)+n_k^2-n_k,\]
which is
\[ \sum_{i\geq1}(i-1)p_i\ev=\frac{1}{2}\sum_{j=1}^{k-1}n_j(n_j-1)+n_k^2-n_k.\]

\begin{lemma}\label{decreasingd}
Suppose $\{m_1,m_2,\cdots,m_k\}$ is a sequence of integers with $m_{s-1}\geq 2m_s\geq m_{s+1}+1 $ and $ m_i\geq m_{i+1}(i\neq s,  s-1) $. We say $\{m_1,m_2,\cdots,m_k\}$ is sorted `almost' descending. Denote $h_t:=\sum\limits_{1\leq j \leq t}m_j $. We define a function $$f(x_1,\cdots,x_k):=\frac{1}{2}\sum\limits_{1\leq j \leq k, j\neq s}x_j(x_j-1)+x_s^2-x^s,$$ where $(x_1,x_2,\cdots,x_k)\in D$ with 
\begin{align*}D=\{(x_1,\cdots,x_k)\in \mathbb{R}^k\mid &   x_1 \geq m_1, x_1+x_2\geq m_1+m_2,\cdots, 	\sum_{j=1}^{k-1}x_j\geq \sum_{j=1}^{k-1}m_j,\\
	& \sum_{j=1}^{k}x_j= \sum_{j=1}^{k}m_j, x_i\geq x_{i+1}(i\neq s,  s-1), \\
 & x_{s-1}\geq 2x_s\geq x_{s+1}+1\}.
\end{align*}
Then $f$ will take the minimal value
 $\frac{1}{2}\sum\limits_{1\leq j \leq k,j\neq s}m_j(m_j-1)+m_s^2-m_s$ if and only if $x_j=m_j$ for all $1\leq j\leq k$.
\end{lemma}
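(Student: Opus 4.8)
The plan is to reprove this by the same induction on $k$ that underlies Lemma \ref{decreasing} and Lemma \ref{decreasingb}, now carrying along the linear corrections coming from the $D$-twist: relative to Lemma \ref{decreasingb} the only change is that the distinguished square $x_s^2$ is replaced by $x_s^2-x_s$, which mirrors the $a_0^2-a_0$ in $F_d$ versus the $a_0^2$ in $F_b$. As in those proofs one allows, inside the recursion, the degenerate situation where the distinguished index $s$ has become the last index of the current sequence; that degenerate case is treated via the weighted version of Lemma \ref{decreasing}. For the base case $k=2$ the feasible set is a segment one of whose endpoints is $(m_1,m_2)$, each summand is strictly convex, and the hypothesis $m_{s-1}\geq 2m_s$ (with the $m_i$ integral) is exactly what places that endpoint at the vertex of the segment toward which $f$ decreases; hence the minimum is attained there and only there.

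For the inductive step with $s\neq k$ I would eliminate $x_k$ via $x_k=h_k-\sum_{j=1}^{k-1}x_j$ and put $g(x_1,\dots,x_{k-1}):=f(x_1,\dots,x_{k-1},x_k)$. A direct computation gives
\[
\frac{\partial g}{\partial x_i}=x_i-x_k \quad (i\neq s),\qquad \frac{\partial g}{\partial x_s}=2x_s-x_k-\tfrac12 .
\]
The constraints defining $D$ --- namely $x_i\geq x_{i+1}$ for $i\neq s,s-1$ together with $x_{s-1}\geq 2x_s\geq x_{s+1}+1$ --- chain to yield $x_i\geq x_k$ for every $i<k$ and $2x_s\geq x_{s+1}+1\geq x_k+1$, so $\partial g/\partial x_i\geq 0$ for all $i<k$ and $\partial g/\partial x_s\geq\tfrac12>0$. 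Thus $g$ is nondecreasing in $x_{k-1}$, so lowering $x_{k-1}$ to the dominance bound $h_{k-1}-\sum_{j=1}^{k-2}x_j$ --- equivalently, forcing $x_k=m_k$ --- does not increase $f$ and leaves it unchanged only when $x_k=m_k$ already. Pinning $x_k=m_k$ splits off the constant $\tfrac12 m_k(m_k-1)$ and leaves precisely the $(k-1)$-variable instance attached to $(m_1,\dots,m_{k-1})$ with distinguished index $s$, to which the induction hypothesis applies. When $s=k$ (which occurs only inside the recursion, since the statement requires $s\leq k-1$) the function is $f=\tfrac12\sum_{j<k}x_j(x_j-1)+x_k^2-x_k$; writing $\sum_{j<k}x_j=h_k-x_k$ turns it into $\tfrac12\sum_{j<k}x_j^2+x_k^2-\tfrac12 x_k$ up to an additive constant, a weighted sum of squares with a harmless linear term, which I would minimize over the dominance polytope by the same facet-by-facet induction as in Lemma \ref{decreasing}: the unconstrained minimizer leaves $D$ unless all $m_i$ coincide, so the minimum lies on a facet $\sum_{j\leq l}x_j=h_l$, on which $f$ decomposes as a sum of two functions of the inductive form.

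I expect the delicate point to be the boundary bookkeeping when the distinguished index is adjacent to the last one, i.e.\ $s=k-1$. There, lowering $x_{k-1}$ while raising $x_k$ degrades $2x_s-x_{s+1}$ three times as fast as the sum constraint changes, so one must check that the dominance bound $h_{k-1}-\sum_{j<k-1}x_j$ is reached before the constraint $2x_{k-1}\geq x_k+1$ becomes active; and when the latter does bind first, one must instead treat $\{x_{k-1},x_k\}$ as a single block tied by $x_k=2x_{k-1}-1$ and recurse on the remaining $k-2$ variables. This is exactly where the asymmetry between the type-$D$ hypothesis $x_{s-1}\geq 2x_s\geq x_{s+1}+1$ and the type-$B$ hypothesis $x_{s-1}-1\geq 2x_s\geq x_{s+1}$ enters, and tracking which of these inequalities are strict at each reduction step is what produces the claimed ``if and only if''.
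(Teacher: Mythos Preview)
Your proposal is correct and follows essentially the same approach as the paper, whose proof consists of the single sentence ``Similar to the proof in Lemma \ref{decreasingb}.'' You have correctly identified the two modifications needed for type $D$: the distinguished term becomes $x_s^2-x_s$, yielding $\partial g/\partial x_s=2x_s-x_k-\tfrac12$, and positivity now comes from the type-$D$ constraint $2x_s\geq x_{s+1}+1\geq x_k+1$ (rather than $2x_s\geq x_{s+1}$ as in type $B$). Your discussion of the boundary case $s=k-1$ and the $s=k$ degeneration goes somewhat beyond what either the paper or the proof of Lemma \ref{decreasingb} makes explicit, but this is extra care rather than a different method.
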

\begin{proof}
    Similar to the proof in Lemma \ref{decreasingb}.
\end{proof}

%%%%%%%%%%%%%%%%%%%%%%%%%%%%%zhegewomeixiewan

Since  $\mathfrak{g}=\mathfrak{so}(2n,\mathbb{C})$, we can consider $\lambda^-$ as a weight corresponding to a   standard parabolic subalgebra of  type $(n_1, n_2,\cdots,n_{k-1},2n_k,n_{k-1},\cdots,n_1)$
(if $n_k>0$ and $\lambda_n>0$), or type $(n_1, n_2,\cdots,n_{k-1},2n_k-1,1,n_{k-1},\cdots,n_1)$ (if $n_k>0$ and $\lambda_n \leq 0$), or type $(n_1, n_2,\cdots,n_{k-1},0,0,n_{k-1},\cdots,n_1)$ (if $n_k=0$) in $\mathfrak{g}=\mathfrak{sl}(2n,\mathbb{C})$. Similarly we say $\lambda$ is a {\it maximal standard weight of type $(n_1, n_2,\cdots,n_k)$} if $\lambda^-$ is a maximal standard weight of   type $(n_1, n_2,\cdots,n_{k-1},2n_k,n_{k-1},\cdots,n_1)$
(if $n_k>0$ and $\lambda_n>0$), or type $(n_1, n_2,\cdots,n_{k-1},2n_k-1,1,n_{k-1},\cdots,n_1)$ (if $n_k>0$ and $\lambda_n \leq 0$), or type $(n_1, n_2,\cdots,n_{k-1},0,0,n_{k-1},\cdots,n_1)$ (if $n_k=0$) in $\mathfrak{g}=\mathfrak{sl}(2n,\mathbb{C})$.

\begin{lemma}[Uniqueness of Type $D$]\label{UniquenessD}
Suppose  $\mathfrak{g}=\mathfrak{so}(2n,\mathbb{C})$. Let $\frp$ be a  standard parabolic subalgebra of type $(n_1, n_2,\cdots,n_k)$ ($n_k\neq 1$).
Suppose $L(\lambda)\in \mathcal{O}^{\mathfrak{p}}$ is an integral  highest weight module. The number $F_d(\lambda^-)$ will take the minimal value
$\frac{1}{2}\sum_{j=1}^{k-1}n_j(n_j-1)+n_k^2-n_k$ if and only if $\lambda$ is a  maximal standard weight of type $(n_1, n_2,\cdots,n_k)$. In other words, $P(\lambda^-)\ev$ is unique when $F_d(\lambda^-)$ takes the minimal value.
\end{lemma}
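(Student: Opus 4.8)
The proof is a transcription of that of Lemma~\ref{UniquenessBC}, with odd boxes replaced by even boxes, $F_b$ replaced by $F_d$, and Lemma~\ref{decreasingb} replaced by Lemma~\ref{decreasingd}; it also uses the evident analog of Lemma~\ref{min} for $F_d$, which one proves by the same one-box-move computation, now tracking even boxes and using $F_d(A)=c_1\ev(c_1\ev-1)+(c_2\ev)^2$ for a two-column domino-type diagram $A$. The plan is as follows. First, since $L(\lambda)\in\mathcal{O}^{\mathfrak{p}}$, the entries of $\lambda^-$ break into decreasing runs: for each block $n_i$ one gets two decreasing subsequences $str_i^+$ and $str_i^-$. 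The feature specific to type $D$ is the central block. When $n_k>0$ and $\lambda_n>0$ the $2n_k$ central coordinates of $\lambda^-$ form a single decreasing run, so $\lambda^-$ is treated as a weight for the $\mathfrak{sl}(2n,\mathbb{C})$-parabolic of type $(n_1,\dots,n_{k-1},2n_k,n_{k-1},\dots,n_1)$; when $\lambda_n\le 0$ the central run splits and one gets type $(n_1,\dots,n_{k-1},2n_k-1,1,n_{k-1},\dots,n_1)$; and when $n_k=0$ there is no central block. I would run the argument in the first case and observe that the other two are handled identically, the point being that in each case the resulting block sizes, once sorted ``almost descending'' to $(m_1,\dots,m_k)$ with $m_s=n_k$, satisfy exactly the inequalities $m_{s-1}\ge 2m_s\ge m_{s+1}+1$ and $m_i\ge m_{i+1}$ $(i\neq s,s-1)$ required by Lemma~\ref{decreasingd}.

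Next I would build $P(\lambda^-)$ by Robinson--Schensted insertion, inserting the runs $str_{j_i}^{\pm}$ in their original order exactly as in Steps 1--3 of the proof of Lemma~\ref{UniquenessBC}, and record after step $i$ the newly created columns as an Area $S_i$; the Area $S_s$ occupies one column and every other $S_i$ occupies at most two columns. Writing $x_i$ for the number of \emph{even} boxes lying in Area $S_i$ of the completed tableau, the insertion combinatorics forces the chain of partial-sum inequalities $x_1\ge m_1$, $x_1+x_2\ge m_1+m_2,\dots$, the total $\sum_j x_j=\sum_j m_j$, the monotonicity $x_i\ge x_{i+1}$ for $i\neq s,s-1$, and the middle inequality $x_{s-1}\ge 2x_s\ge x_{s+1}+1$ --- that is, $(x_1,\dots,x_k)$ lies in the domain $D$ of Lemma~\ref{decreasingd}. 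By the $F_d$-analog of Lemma~\ref{min}, at the minimum each $S_i$ with $i\neq s$ must genuinely consist of two columns differing in length by $0$ or $2$, and then $F_d(\lambda^-)$ equals the function $f$ of Lemma~\ref{decreasingd} evaluated at $(x_1,\dots,x_k)$.

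Applying Lemma~\ref{decreasingd} then gives $F_d(\lambda^-)\ge \frac12\sum_{j=1}^{k-1}n_j(n_j-1)+n_k^2-n_k$ with equality if and only if $x_i=m_i$ for all $i$; by Lemma~\ref{countfbfd} the right-hand side is precisely $F_d(P)$ for the Z-diagram $P$ of type $(n_k;n_1,\dots,n_{k-1})$, so the minimum is attained. When equality holds the numbers $x_i$ and the shape of each Area are determined, hence $P(\lambda^-)\ev$ is unique and $\lambda$ is a maximal standard weight of type $(n_1,\dots,n_k)$; conversely a maximal standard weight realizes this minimum. The hypothesis $n_k\neq 1$ is used only to stay in the genuine type-$D$ parabolic regime; the case $n_k=1$ is reduced to type $(n_1,\dots,n_{k-1}+1,0)$ as in the introduction.

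The step I expect to be the main obstacle is the bookkeeping for the central block: checking that the central coordinates of $\lambda^-$ really do produce the asymmetric inequality $x_{s-1}\ge 2x_s\ge x_{s+1}+1$ (note the contrast with the $B/C$ case, where it read $x_{s-1}-1\ge 2x_s$), and verifying that this holds uniformly in both sub-cases $\lambda_n>0$ and $\lambda_n\le 0$. Everything else is a faithful copy of the argument for Lemma~\ref{UniquenessBC}.
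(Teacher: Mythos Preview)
Your proposal is correct and follows essentially the same route as the paper: transcribe the proof of Lemma~\ref{UniquenessBC} with odd boxes replaced by even boxes, invoke Lemma~\ref{decreasingd} in place of Lemma~\ref{decreasingb}, and use the $F_d$-analog of Lemma~\ref{min}. The one place where the paper is slightly more explicit than your outline is exactly the point you flag as the main obstacle: when $\lambda_n\le 0$ the central run splits as $(2n_k-1,1)$, so the tableau $Y_s$ may have a stray second column with a single box, and the paper absorbs this by \emph{defining} Area $S_s$ to be only the first column (and likewise truncating later Areas to their first two new columns when overflow occurs); this is precisely the bookkeeping adjustment that makes the inequalities $x_{s-1}\ge 2x_s\ge x_{s+1}+1$ come out right uniformly in both sub-cases.
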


\begin{proof}
The proof of this case  is similar to  that in types $B$ and $C$. However, we have to focus on even boxes  rather than odd boxes.

     We sort $(n_1, n_2,\cdots,n_k)$  `almost' descending to get a sequence $(m_1, m_2,\cdots,m_k)$ where $m_s=n_k$,  $m_{s-1}\geq 2m_s\geq m_{s+1}+1 $ and $ m_i\geq m_{i+1}(i\neq s,  s-1)$. Obviously, $m_i$ is equal to some $n_j$. We call it $n_{j_i}$. And it is corresponding to two decreasing subsequences $str_{j_i}^+$ and $str_{j_i}^-$.
    
    Now we construct $P(\lambda^-)$, and we only focus on even boxes.
    
    {\bf Step 1.} We construct a Young tableau $Y_1$ corresponding to $m_1=n_{j_1}$. Actually we choose the subsequence $(str_{j_1}^+, str_{j_1}^-)$ to construct the Young tableau $Y_1$.
   Then we find that there are $m_1$ even boxes in $Y_1$. If $s=1$, the Young tableau $Y_1$ contains only  one column or two columns with $c_2(Y_1)=1$. We denote the first column in $Y_1$  by Area $S_1$. If $s\neq 1$, the Young tableau $Y_1$ has at most  two columns.
    There are $m_1$ even boxes in $Y_1$. We denote the columns corresponding to   $Y_1$ by Area $S_1$.

    {\bf Step 2.} We continue to construct the Young tableau with boxes corresponding to $m_2=n_{j_2}$. We insert $str_{j_2}^+$ and $str_{j_2}^-$ into the subsequence $(str_{j_1}^+, str_{j_1}^-)$ according to their original order in $\lambda^-$, then  the new subsequence  will  generate a new Young tableau $Y_2$. And we find that there are $m_1+m_2$ even boxes in $Y_2$. Comparing to Area $S_1$, we get some other columns in $Y_2$ (it may happen that $c_4(Y_2)=1$). We use Area $S_2$ to denote the new columns if $c_4(Y_2)=0$ or denote the first two new columns if $c_4(Y_2)>0$. With some new elements coming in, there may be some more even boxes in the new Area $S_1$.

     {\bf Step 3.} Similarly, we continue these steps to $m_k$. Then We have constructed $P(\lambda^-)$.

 We use $x_i$ to denote the number of even boxes in the   Area $S_i$ of  $P(\lambda^-)$. Note that the Area $S_s$ contains only one column containing even boxes when it is not empty, and the other Areas contain two columns. By the steps of  constructing $P(\lambda^-)$, we find that
    \begin{align*}			
        & x_1 \geq m_1, x_1+x_2\geq m_1+m_2,\cdots, 
        	\sum_{j=1}^{k-1}x_j\geq \sum_{j=1}^{k-1}m_j,\\
	&\sum_{j=1}^{k}x_j= \sum_{j=1}^{k}m_j, x_i\geq x_{i+1}(i\neq s, s-1), 
  x_{s-1}\geq 2x_s\geq x_{s+1}+1.
		\end{align*}

 By Lemma \ref{min}, the Area $S_i$ ($i\neq s$) will contain two columns if we want the value of $F_d(\lambda^-)$ to be as small as possible.

    Then by Lemma \ref{decreasingd} and Lemma \ref{min}, $F_d(\lambda^-)$ will take the minimal value
$$\frac{1}{2}\sum_{j=1}^{k-1}n_j(n_j-1)+n_k^2-n_k$$ if and only if $x_i=m_i$ for all $1\leq i\leq k$.
Thus, the numbers of  even boxes in different areas are fixed. 

This finishes the proof.
\end{proof}
%%%%%%%%%%%%%%%%%%%%%%%%%%%%%
 In Lemma \ref{UniquenessD}, we constructed  a Young tableau $P(\lambda^-)$ whose even Young tableau $P(\lambda^-)\ev$ is unique when $F_d(\lambda^-)$  takes the minimal value. In fact we can construct a Z-diagram which has same even boxes with $P(\lambda^-)\ev$.
\begin{theorem}\label{Z-diagram of even boxes}
  Suppose  $\mathfrak{g}=\mathfrak{so}(2n,\mathbb{C})$. Let $\frp$ be a  standard parabolic subalgebra of type $(n_1, n_2,\cdots,n_k)$ ($n_k\neq 1$).
Suppose $L(\lambda)\in \mathcal{O}^{\mathfrak{p}}$ is an integral  highest weight module.  The Z-diagram $ P$ of type $(n_k; n_1, n_2,\cdots, n_{k-1})$ has the same even boxes as the even Young tableau  $P(\lambda^-)\od$ when $F_d(\lambda^-)$ takes the minimal value.
\end{theorem}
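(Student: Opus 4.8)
The plan is to imitate, mutatis mutandis, the two–line argument used for Theorem \ref{Z-diagram of odd boxes}, replacing odd boxes by even boxes, $F_b$ by $F_d$, Lemma \ref{countfbfd}'s first formula by its second, and Lemma \ref{UniquenessBC} by Lemma \ref{UniquenessD}. Concretely, the goal is to show that the even hollow diagram of the Z-diagram $P$ of type $(n_k; n_1,\cdots,n_{k-1})$ is forced to coincide with the even diagram $P(\lambda^-)\ev$ of any maximal standard weight, because both sit at the minimum of a functional ($F_d$) that only sees even boxes, and that minimizing even diagram is unique.

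First I would apply Lemma \ref{countfbfd} with $a_0=n_k$ and $b_i=n_i$ to get $F_d(P)=n_k^2-n_k+\frac12\sum_{i=1}^{k-1}n_i(n_i-1)$, which is exactly the minimal value of $F_d(\lambda^-)$ over all $L(\lambda)\in\mathcal{O}^{\mathfrak{p}}$ identified in the discussion preceding Lemma \ref{decreasingd}. Since $F_d(x)=\sum_{i\geq1}(i-1)p_i\ev$ depends only on the shape of the even hollow subdiagram, the even diagram $P\ev$ and the even diagram $P(\lambda^-)\ev$ of a maximal standard weight $\lambda$ (which realizes the minimum by Lemma \ref{UniquenessD}) are both shapes attaining this minimum. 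Next I would invoke Lemma \ref{UniquenessD}, which says that the minimizing even diagram is unique; so it only remains to check that $P\ev$ is \emph{one of} the minimizing even diagrams, equivalently that the even diagram of the Z-diagram $P$ agrees with the even diagram assembled from the areas $S_1,\dots,S_k$ in the proof of Lemma \ref{UniquenessD}.

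That last compatibility check is where the real (but routine) work lies. One must match the block structure of the Z-diagram — one column of $n_k$ $A$-dominoes (which, as noted after Definition \ref{zdigram}, occupies an odd column), two columns of $n_i$ $B$-dominoes for each $i<k$, rearranged by column height — with the area decomposition produced by the R–S insertion of a maximal standard weight: the single-column area $S_s$ must receive exactly $n_k$ even boxes, and each two-column area $S_i$ ($i\neq s$) must receive the split $(\lceil m_i/2\rceil,\lfloor m_i/2\rfloor)$ of even boxes. Both of these are exactly the values computed in Corollary \ref{numberev}, and Lemma \ref{min} guarantees that the two-column shape is the one that minimizes the contribution, so the verification reduces to a parity bookkeeping over the cases $n_k>0$ with $\lambda_n>0$, $n_k>0$ with $\lambda_n\leq 0$, and $n_k=0$. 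The main obstacle is therefore not conceptual but organizational: keeping the correspondence between the Z-diagram blocks and the insertion areas consistent across these cases; once that is in place, uniqueness from Lemma \ref{UniquenessD} immediately yields $P\ev=P(\lambda^-)\ev$, which is the assertion of the theorem.
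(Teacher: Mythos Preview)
Your approach is essentially the paper's own: the paper's proof is literally the two sentences ``From Lemma \ref{countfbfd}, we know $F_d(P)=F_d(\lambda^-)$. Thus the result follows from Lemma \ref{UniquenessD} since $P(\lambda^-)\ev$ is unique,'' which is exactly your first paragraph. The compatibility check you outline in your second paragraph---matching the block structure of the Z-diagram with the area decomposition $S_1,\dots,S_k$ from the proof of Lemma \ref{UniquenessD}---is additional detail the paper does not carry out explicitly; the paper simply treats equality of the $F_d$-values together with the uniqueness clause of Lemma \ref{UniquenessD} as sufficient, leaving that structural identification implicit.
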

\begin{proof}
    From Lemma \ref{countfbfd}, we know $F_d(P)=F_d(\lambda^-)$. Thus the result follows from Lemma \ref{UniquenessD} since $P(\lambda^-)\ev$ is unique.
\end{proof}

Recall that the case of $n_k=1$ can be reduced to the case of $n_k=0$, so it is unnecessary to talk about it here. 
Thus we have proved  the main Theorem \ref{mainD} for type $D$.

\begin{example}
 Let   $\mathfrak{g}=\mathfrak{so}(10,\mathbb{C})$.  Suppose $\Delta\setminus I=\{\alpha_{1},\alpha_{3},\alpha_{5}\}$. Then the corresponding   parabolic subalgebra 
  $\mathfrak{p}_I$ is a standard parabolic subalgebra of type  $(1,2,2,0)$. By Theorem \ref{mainD}, a simple integral highest weight  module $L(\lambda)$  in $\mathcal{O}^{\mathfrak{p}}$  is socular  if and only if $P(\lambda^-)$ has the same even boxes as the following Z-diagram $P$ of type  $(0;1,2,2)$:  
\begin{center}
% \begin{tikzpicture}[scale=\domscale-0.1,baseline=-15pt]
% \hdom{4}{0}{B}

% \hdom{0}{0}{B}
% \hdom{0}{1}{B}

% \hdom{2}{0}{B}
% \hdom{2}{1}{B}
% \end{tikzpicture}. 
 \begin{tabular}{l}
\begin{tikzpicture}[x=0.38cm,y=0.38cm]
\draw[-](0,0) -- (4,0);
\draw[-](0,1) -- (6,1);
\draw[-](0,2) -- (6,2);
\draw[-](2,0) -- (2,2);
\draw[-](0,0) -- (0,2);

\draw[-](4,0) -- (4,2);
\draw[-](6,1) -- (6,2);

\node  at (1,0.5) {\small{B}};
\node  at (1,1.5) {\small{B}};

\node  at (3,0.5) {\small{B}};

\node  at (3,1.5) {\small{B}};
\node  at (5,1.5) {\small{B}};

\end{tikzpicture}.
\end{tabular}

\end{center}

The shape of $P(\lambda^-)\ev$ is $p(\lambda^-)\ev=p\ev=(3,2)$, where $p$ is the shape of the Z-diagram $P$.

Now  suppose $\lambda=(-6,-4,-5,-2,-3)\in \mathfrak{h}^*$. Then $L(\lambda)$  in $\mathcal{O}^{\mathfrak{p}}$  is socular. We can check that $P(\lambda^-)$ has the same even boxes as the  Z-diagram $P$ of type  $(0; 1, 2, 2)$. 
Actually,  since $\lambda^-=(-6,-4,-5,-2,-3,3,2,5,4,6)$ and from the R-S algorithm, we have

\begin{align*}
 &{\begin{tikzpicture}[scale=\domscale+0.1,baseline=-13pt]
			\hobox{0}{0}{-6}
			\hobox{1}{0}{-4}
			
	\end{tikzpicture}}\to
	{\begin{tikzpicture}[scale=\domscale+0.1,baseline=-20pt]
			\hobox{0}{0}{-6}
			\hobox{0}{1}{-4}
            \hobox{1}{0}{-5}
            \hobox{2}{0}{-2}
           
	\end{tikzpicture}}\to
	{\begin{tikzpicture}[scale=\domscale+0.1,baseline=-20pt]
		\hobox{0}{0}{-6}
			\hobox{0}{1}{-4}
   \hobox{1}{1}{-2}
            \hobox{1}{0}{-5}
            \hobox{2}{0}{-3}

	\end{tikzpicture}}\to
	{\begin{tikzpicture}[scale=\domscale+0.1,baseline=-20pt]
		\hobox{0}{0}{-6}
			\hobox{0}{1}{-4}
   \hobox{1}{1}{-2}
            \hobox{1}{0}{-5}
            \hobox{2}{0}{-3}
            \hobox{3}{0}{3}
	\end{tikzpicture}}\\
 &\to
	{\begin{tikzpicture}[scale=\domscale+0.1,baseline=-20pt]
		\hobox{0}{0}{-6}
			\hobox{0}{1}{-4}
   \hobox{1}{1}{-2}
    \hobox{2}{1}{3}
            \hobox{1}{0}{-5}
            \hobox{2}{0}{-3}
            \hobox{3}{0}{2}
	\end{tikzpicture}}
 \to
	{\begin{tikzpicture}[scale=\domscale+0.1,baseline=-20pt]
		\hobox{0}{0}{-6}
			\hobox{0}{1}{-4}
   \hobox{1}{1}{-2}
    \hobox{2}{1}{3}
            \hobox{1}{0}{-5}
            \hobox{2}{0}{-3}
            \hobox{3}{0}{2}
             \hobox{4}{0}{5}
	\end{tikzpicture}}

  \to
	{\begin{tikzpicture}[scale=\domscale+0.1,baseline=-20pt]
		\hobox{0}{0}{-6}
			\hobox{0}{1}{-4}
   \hobox{1}{1}{-2}
    \hobox{2}{1}{3}
     \hobox{3}{1}{5}
            \hobox{1}{0}{-5}
            \hobox{2}{0}{-3}
            \hobox{3}{0}{2}
             \hobox{4}{0}{4}
	\end{tikzpicture}}
  
 \\
 & 
  
  \to
	{\begin{tikzpicture}[scale=\domscale+0.1,baseline=-20pt]
		\hobox{0}{0}{-6}
			\hobox{0}{1}{-4}
   \hobox{1}{1}{-2}
    \hobox{2}{1}{3}
     \hobox{3}{1}{5}
            \hobox{1}{0}{-5}
            \hobox{2}{0}{-3}
            \hobox{3}{0}{2}
             \hobox{4}{0}{4}
                \hobox{5}{0}{6}
	\end{tikzpicture}}
  =P(\lambda^-).
	\end{align*}

\end{example}

\begin{example}
 Let   $\mathfrak{g}=\mathfrak{so}(10,\mathbb{C})$.  Suppose $\Delta\setminus I=\{\alpha_{1},\alpha_{4}\}$. Then the corresponding   parabolic subalgebra 
  $\mathfrak{p}_I$ is a standard parabolic subalgebra of type  $(1,3,1)$. We can regard it as a standard parabolic subalgebra of type  $(1,4,0)$. By Theorem \ref{mainD}, a simple integral highest weight  module $L(\lambda)$  in $\mathcal{O}^{\mathfrak{p}}$  is socular  if and only if $P(\lambda^-)$ has the same even boxes as the following Z-diagram $P$ of type  $(0;1,4)$:  
\begin{center}
\begin{tikzpicture}[scale=\domscale-0.1,baseline=-15pt]
\hdom{0}{0}{B}
\hdom{0}{1}{B}
\hdom{0}{2}{B}
\hdom{0}{3}{B}
\hdom{2}{0}{B}
\end{tikzpicture}. 
%  \begin{tabular}{l}
% \begin{tikzpicture}[x=0.38cm,y=0.38cm]
% \draw[-](0,0) -- (4,0);
% \draw[-](0,1) -- (6,1);
% \draw[-](0,2) -- (6,2);
% \draw[-](2,0) -- (2,2);
% \draw[-](0,0) -- (0,2);

% \draw[-](4,0) -- (4,2);
% \draw[-](6,1) -- (6,2);

% \node  at (1,0.5) {\small{B}};
% \node  at (1,1.5) {\small{B}};

% \node  at (3,0.5) {\small{B}};

% \node  at (3,1.5) {\small{B}};
% \node  at (5,1.5) {\small{B}};

% \end{tikzpicture}.
% \end{tabular}  

\end{center}

The shape of $P(\lambda^-)\ev$ is $p(\lambda^-)\ev=p\ev=(2,1,1,1)$, where $p$ is the shape of the Z-diagram $P$.

Now  suppose $\lambda=(-9,-5,-6,-7,8)\in \mathfrak{h}^*$. Then $L(\lambda)$  in $\mathcal{O}^{\mathfrak{p}}$  is socular. We can check that $P(\lambda^-)$ has the same even boxes as the  Z-diagram $P$ of type  $(0; 1, 4)$. 
Actually,  since $\lambda^-=(-9,-5,-6,-7,8,-8,7,6,5,9)$ and from the R-S algorithm, we have

\begin{align*}
 &{\begin{tikzpicture}[scale=\domscale+0.1,baseline=-13pt]
			\hobox{0}{0}{-9}
			\hobox{1}{0}{-5}
			
	\end{tikzpicture}}\to
	{\begin{tikzpicture}[scale=\domscale+0.1,baseline=-28pt]
			\hobox{0}{0}{-9}
			\hobox{0}{1}{-6}
   \hobox{0}{2}{-5}
   \hobox{1}{0}{-7}
            \hobox{2}{0}{8}
           
	\end{tikzpicture}}\to
	{\begin{tikzpicture}[scale=\domscale+0.1,baseline=-38pt]
			\hobox{0}{0}{-9}
			\hobox{0}{1}{-7}
   \hobox{0}{2}{-6}
   \hobox{0}{3}{-5}
   \hobox{1}{0}{-8}
            \hobox{2}{0}{8}   
	\end{tikzpicture}}\to
	{\begin{tikzpicture}[scale=\domscale+0.1,baseline=-38pt]
			\hobox{0}{0}{-9}
			\hobox{0}{1}{-7}
   \hobox{0}{2}{-6}
   \hobox{0}{3}{-5}
   \hobox{1}{0}{-8}
    \hobox{1}{1}{8}   
    \hobox{2}{0}{7}   
	\end{tikzpicture}}\\
 &\to
	{\begin{tikzpicture}[scale=\domscale+0.1,baseline=-38pt]
		\hobox{0}{0}{-9}
			\hobox{0}{1}{-7}
   \hobox{0}{2}{-6}
   \hobox{0}{3}{-5}
   \hobox{1}{0}{-8}
    \hobox{1}{1}{7}  
      \hobox{1}{2}{8}
    \hobox{2}{0}{6}   
	\end{tikzpicture}}
 \to
	{\begin{tikzpicture}[scale=\domscale+0.1,baseline=-38pt]
		\hobox{0}{0}{-9}
			\hobox{0}{1}{-7}
   \hobox{0}{2}{-6}
   \hobox{0}{3}{-5}
   \hobox{1}{0}{-8}
    \hobox{1}{1}{6}  
      \hobox{1}{2}{7}
         \hobox{1}{3}{8}
    \hobox{2}{0}{5}   
	\end{tikzpicture}} \to
	{\begin{tikzpicture}[scale=\domscale+0.1,baseline=-38pt]
		\hobox{0}{0}{-9}
			\hobox{0}{1}{-7}
   \hobox{0}{2}{-6}
   \hobox{0}{3}{-5}
   \hobox{1}{0}{-8}
    \hobox{1}{1}{6}  
      \hobox{1}{2}{7}
         \hobox{1}{3}{8}
    \hobox{2}{0}{5}   
     \hobox{3}{0}{9}   
	\end{tikzpicture}}
  =P(\lambda^-).
	\end{align*}

\end{example}

\section{The non-integral case}\label{nonintegral}

Let $\mathfrak{g}=\mathfrak{sl}(n,\mathbb{C})$. Let $\frp$ be a parabolic subalgebra of size $(n_1, n_2,\cdots,n_k)$ with $\mathfrak{q}_{\mathfrak{p}}$ being the corresponding decreasing parabolic subalgebra of size $(m_1, m_2,\cdots,m_k)$. When a  weight  $\lambda \in \mathcal{O}^{\mathfrak{p}}$ is non-integral, from Bai-Xie \cite{BXie}, we can associate some Young tableaux  to $\lambda$. For any $ \lambda\in\mathfrak{h}^* $, we write $\lambda=(\lambda_1,\cdots,\lambda_n)$. Let   $ P(\lambda) $ be a set of some Young tableaux as follows. Let $ \lambda_Y: \lambda_{i_1}, \lambda_{i_2}, \dots, \lambda_{i_r} $  be a maximal subsequence of $ \lambda_1,\lambda_2,\dots,\lambda_n $ such that $ \lambda_{i_k} $, $ 1\leq k\leq r $ are congruent to each other by $ \mathbb{Z} $. Then let $P(\lambda_Y)$ be the Young tableau obtained from  $ \lambda_Y $ by using R-S algorithm, which is a Young tableau in $ P(\lambda) $.

 Now we put these Young tableaux together and make them into one bigger Young tableau $\bar{P}(\lambda)$ by inserting the columns of other Young tableaux into the first Young tableau such that $c_i(\bar{P}(\lambda))$ is decreasing for $1\leq i \leq k$. In other words,  the Young tableau $\bar{P}(\lambda)=\underset{P(\lambda_{Y})\in P(\lambda)}{\stackrel{c}{\sqcup}} P(\lambda_{Y}).$
Here $P_1{\stackrel{c}{\sqcup}}P_2$ denotes 	the Young tableau whose multiset of nonzero column lengths equals the union of	the two Young tableaux $P_1$ and $P_2$. 

 Then from Theorem  \ref{mainA}, we have the following theorem and corollary.

 \begin{theorem}Let $\mathfrak{g}=\mathfrak{sl}(n,\mathbb{C})$. Let $\frp$ be a parabolic subalgebra of type $(n_1, n_2,\cdots,n_k)$. A non-integral  highest weight module $L(\lambda)$  in $\mathcal{O}^{\mathfrak{p}}$  is socular  if and only if  $c_i(\bar{P}(\lambda))=m_i$ for $1\leq i \leq k$, where $(m_1,\cdots,m_k)$ is the  arrangement of the sequence $(n_1,n_2,..,n_{k})$ in descending order.
\end{theorem}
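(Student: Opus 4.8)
The plan is to reduce the non-integral case to the integral characterization in Theorem \ref{mainA} by exploiting the block decomposition of the parabolic category $\mathcal{O}^{\mathfrak{p}}$ along congruence classes of the weight $\lambda$. First I would recall that, since $\mathfrak{g}=\mathfrak{sl}(n,\mathbb{C})$, the integral root subsystem $\Phi_{[\lambda]}$ decomposes as a direct sum $\bigoplus_Y \Phi_{[\lambda_Y]}$, one summand for each maximal subsequence $\lambda_Y$ whose entries are mutually $\mathbb{Z}$-congruent, and each $\Phi_{[\lambda_Y]}$ is of type $A$. Correspondingly the Levi $\mathfrak{l}$ and the nilradical $\mathfrak{u}$ of $\mathfrak{p}$, when restricted to the integral picture, break up so that the relevant "integral parabolic" is a product of type-$A$ parabolics, one for each congruence class, whose block sizes are the column lengths that the entries of $\lambda_Y$ contribute within the decreasing rearrangement. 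The key combinatorial fact is that the Gelfand--Kirillov dimension of $L(\lambda)$ depends only on the integral data: by \cite{BXie}, $\GK L(\lambda)$ is computed from the multiset of column lengths of the tableaux $P(\lambda_Y)$, which is exactly the multiset of column lengths of $\bar{P}(\lambda)$.

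The main steps, in order, would be: (1) write down the maximal GK-dimension $d_m(\mathfrak{p})=\dim\mathfrak{u}$ for the type-$(n_1,\dots,n_k)$ parabolic, and observe that $L(\lambda)$ is socular iff $\GK L(\lambda)=d_m(\mathfrak{p})$, by Irving's result quoted in the introduction; (2) express $\GK L(\lambda)$ via the R-S tableaux of the congruence subsequences, namely $\GK L(\lambda) = \binom{n}{2} - \sum_Y F_a(\lambda_Y)$ where $F_a$ is the type-$A$ statistic $\sum_i\binom{c_i}{2}$ applied to the column lengths $c_i$ of $P(\lambda_Y)$ — equivalently, since the $F_a$-statistic only sees the multiset of column lengths, $\GK L(\lambda)=\binom{n}{2}-\sum_i \binom{c_i(\bar{P}(\lambda))}{2}$; (3) the constraint that $L(\lambda)\in\mathcal{O}^{\mathfrak{p}}$ forces, for each block, that the column lengths of $P(\lambda_Y)$ dominate the decreasing rearrangement of the block sizes assigned to that congruence class (this is the same dominance phenomenon used in the proof of Theorem \ref{mainA}, applied block by block), and hence the multiset $\{c_i(\bar{P}(\lambda))\}$ dominates $(m_1,\dots,m_k)$; (4) apply the strict convexity argument — exactly Lemma \ref{decreasing} with exponent-two replaced by the function $t\mapsto t(t-1)/2$, or simply the observation that $\sum\binom{c_i}{2}$ is Schur-convex — to conclude that $\sum_i\binom{c_i(\bar{P}(\lambda))}{2}$ attains its minimum $\sum_i\binom{m_i}{2}$ over all multisets dominating $(m_1,\dots,m_k)$ with the correct total, if and only if $c_i(\bar{P}(\lambda))=m_i$ for all $i$; (5) combine: $\GK L(\lambda)=d_m(\mathfrak{p})$ iff $c_i(\bar{P}(\lambda))=m_i$ for $1\le i\le k$.

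The step I expect to be the main obstacle is (3): carefully justifying that membership of $L(\lambda)$ in the \emph{non-integral} parabolic category $\mathcal{O}^{\mathfrak{p}}$ translates into the correct per-block dominance conditions on the column lengths of the $P(\lambda_Y)$. One has to keep track of how the $k$ blocks $(n_1,\dots,n_k)$ of $\mathfrak{p}$ are distributed among the congruence classes — a single $n_j$ may be split across several classes, and a single class may receive pieces of several $n_j$'s — and check that after regrouping, the "effective" parabolic seen by each congruence class is genuinely of the maximal-standard type needed to invoke the integral argument. This bookkeeping is routine in spirit but delicate; once it is in place, steps (2) and (4) are immediate transcriptions of material already developed for Theorem \ref{mainA} and Lemma \ref{decreasing}, and (5) is a one-line synthesis.
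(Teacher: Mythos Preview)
Your approach is correct and is essentially a detailed expansion of what the paper leaves implicit: the paper gives no proof for this theorem at all, simply asserting ``from Theorem \ref{mainA} we have the following'' and relying on the GK-dimension formula $\GK L(\lambda)=\binom{n}{2}-\sum_{x\in[\lambda]}F_a(x)$ (Proposition \ref{gkdim}(1)). Your steps (1), (2), (4), (5) are exactly the intended argument.

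One correction that removes the obstacle you flagged in step (3): a single block $n_j$ can \emph{never} split across congruence classes. The condition $L(\lambda)\in\mathcal{O}^{\mathfrak p}$ means $\langle\lambda-\rho,\alpha^\vee\rangle\in\mathbb{Z}_{\ge0}$ for each simple root $\alpha\in I$; for type $A$ this says $\lambda_i-\lambda_{i+1}\in\mathbb{Z}_{>0}$ whenever the indices $i,i+1$ lie in the same block. Hence every block of $\lambda$ sits entirely inside one congruence class $Y$, and $\lambda_Y$ is itself a concatenation of strictly decreasing blocks whose sizes are a subcollection of the $n_j$. Greene's theorem then gives the required dominance for each $P(\lambda_Y)$, and sorted union preserves dominance (e.g.\ via the characterization $\mu\succeq\nu\iff\sum_i\max(\mu_i-t,0)\ge\sum_i\max(\nu_i-t,0)$ for all $t$), so the multiset $\{c_i(\bar P(\lambda))\}$ dominates $(m_1,\dots,m_k)$. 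With this simplification the ``delicate bookkeeping'' you anticipated disappears and the argument is entirely routine.
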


\begin{corollary}Let $\mathfrak{g}=\mathfrak{sl}(n,\mathbb{C})$. Let $\frp$ be a parabolic subalgebra of type $(n_1, n_2,\cdots,n_k)$. A non-integral  highest weight module $L(\lambda)$  in $\mathcal{O}^{\mathfrak{p}}$  is socular  if and only if we divide $\lambda=(\lambda_1,\cdots,\lambda_n)$ into several subsequences  such that any two entries of a subsequence has an integral difference and the restriction of  $\lambda$ to each subsequence is a socular weight in the corresponding parabolic category.

\end{corollary}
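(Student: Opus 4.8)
The plan is to deduce this corollary from the preceding theorem, from Theorem~\ref{mainA}, and from the convexity estimate Lemma~\ref{decreasing}. The first step is to make the combinatorics of a non-integral weight in $\mathcal O^{\mathfrak p}$ transparent. Because $L(\lambda)\in\mathcal O^{\mathfrak p}$ and $\mathfrak p$ is standard of type $(n_1,\dots,n_k)$, the Levi $\mathfrak l$ decomposes into blocks $\mathfrak{gl}_{n_1}\oplus\cdots\oplus\mathfrak{gl}_{n_k}$ supported on consecutive intervals of indices, and dominance forces the entries of $\lambda$ inside a single block to be strictly decreasing; in particular they are mutually congruent modulo $\mathbb Z$. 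Hence the partition of $\{1,\dots,n\}$ into the $k$ Levi blocks refines the partition into maximal $\mathbb Z$-congruence classes: each class $Y$ is a union of blocks, $\lambda_Y$ is the associated subsequence of $\lambda$ (a concatenation of the strictly decreasing block sequences lying in $Y$), and the standard parabolic $\mathfrak p_Y\subset\mathfrak{sl}_{|Y|}$ whose Levi blocks are exactly those contained in $Y$ satisfies $L(\lambda_Y)\in\mathcal O^{\mathfrak p_Y}$, with $\mathfrak p_Y$ of type equal to the tuple of sizes of those blocks. These $\lambda_Y$ are precisely the subsequences of pairwise $\mathbb Z$-congruent entries appearing in the statement. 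By the very definition $\bar P(\lambda)=\underset{P(\lambda_Y)\in P(\lambda)}{\stackrel{c}{\sqcup}}P(\lambda_Y)$, the multiset of column lengths of $\bar P(\lambda)$ is the disjoint union, over the classes $Y$, of the multisets of column lengths of the $P(\lambda_Y)$.

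For the ``if'' direction, suppose each $L(\lambda_Y)$ is socular in $\mathcal O^{\mathfrak p_Y}$. By Theorem~\ref{mainA}, the multiset of column lengths of $P(\lambda_Y)$ is then exactly the multiset of sizes of the blocks contained in $Y$. Taking the disjoint union over all classes $Y$, the multiset of column lengths of $\bar P(\lambda)$ equals $\{n_1,\dots,n_k\}$, i.e. $c_i(\bar P(\lambda))=m_i$ for all $i$; hence $L(\lambda)$ is socular by the preceding theorem.

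For the ``only if'' direction, fix a class $Y$, let $q^{(Y)}$ be the (weakly decreasing) partition of column lengths of $P(\lambda_Y)$, and let $\mathbf m^{(Y)}$ be the partition obtained by sorting the sizes of the blocks contained in $Y$ in decreasing order. Since $\lambda_Y$ is a concatenation of the strictly decreasing block sequences, Greene's theorem for the R-S algorithm of \cite{BXie} (the same domination that already underlies the integral case of Theorem~\ref{mainA}) yields $q^{(Y)}_1+\cdots+q^{(Y)}_i\ge \mathbf m^{(Y)}_1+\cdots+\mathbf m^{(Y)}_i$ for every $i$, together with $|q^{(Y)}|=|\mathbf m^{(Y)}|$ and no more parts than $\mathbf m^{(Y)}$; thus, after padding by zeros, $q^{(Y)}$ lies in the domain $D$ of Lemma~\ref{decreasing} relative to $\mathbf m^{(Y)}$. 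Lemma~\ref{decreasing} then gives $\sum_j (q^{(Y)}_j)^2\ge \sum_j (\mathbf m^{(Y)}_j)^2$, with equality if and only if $q^{(Y)}=\mathbf m^{(Y)}$, which by Theorem~\ref{mainA} is equivalent to $L(\lambda_Y)$ being socular. Summing over all classes $Y$ and using that $\bigsqcup_Y \mathbf m^{(Y)}=\{m_1,\dots,m_k\}$ as multisets, one obtains
\[
\sum_i c_i(\bar P(\lambda))^2 \;=\; \sum_Y \sum_j (q^{(Y)}_j)^2 \;\ge\; \sum_Y \sum_j (\mathbf m^{(Y)}_j)^2 \;=\; \sum_i m_i^2 ,
\]
with equality if and only if every $L(\lambda_Y)$ is socular. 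If $L(\lambda)$ is socular, then by the preceding theorem $c_i(\bar P(\lambda))=m_i$ for all $i$, so the left-hand side equals $\sum_i m_i^2$; equality is forced throughout, and therefore each $L(\lambda_Y)$ is socular. The two directions together prove the corollary.

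The step I expect to be the main obstacle is the domination inequality for $q^{(Y)}$ in the ``only if'' direction: one must check that the column lengths of the R-S tableau $P(\lambda_Y)$ of \cite{BXie} dominate the sorted block-size partition $\mathbf m^{(Y)}$, so that Lemma~\ref{decreasing} applies. This amounts to matching the particular insertion algorithm used in \cite{BXie} with the usual maximal-decreasing-subsequence statistics (Greene's theorem), and is exactly the domination implicit in the integral case; it should be recorded carefully. A secondary point to state cleanly is the structural claim that every $\mathbb Z$-congruence class of entries is a union of Levi blocks and that the restriction $L(\lambda_Y)$ lies in $\mathcal O^{\mathfrak p_Y}$, both of which follow from the dominance and integrality conditions defining membership in $\mathcal O^{\mathfrak p}$, but on which the whole reduction rests.
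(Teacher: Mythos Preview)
Your argument is correct. The paper provides no explicit proof for this corollary---it simply asserts that it follows (together with the preceding theorem) from Theorem~\ref{mainA}---so your write-up supplies the details the paper omits: the reduction of a $\mathbb Z$-congruence class to a union of Levi blocks, the ``if'' direction via the column-union description of $\bar P(\lambda)$, and the ``only if'' direction via Greene's theorem domination together with the convexity estimate Lemma~\ref{decreasing}. This is exactly the natural route implicit in the paper's setup (and equivalent to arguing directly with $F_a$ and Proposition~\ref{gkdim}); the one external ingredient you invoke, Greene's theorem for the RSK column partition, is standard and is indeed what underlies the type $A$ characterization, so your caution there is warranted but should not cause difficulty.
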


For the types $B_n, C_n$ and $D_n$, let
	$[\lambda] $  be the set of  maximal subsequences $ x $ of  $ \lambda $ such that any two entries of $ x $ have an integral  difference or sum. In this case, we set $ [\lambda]_1 $ (resp. $ [\lambda]_2 $) to be the subset of $ [\lambda] $ consisting of sequences with  all entries belonging to $ \mathbb{Z} $ (resp. $ \frac12+\mathbb{Z} $).
	We set $[\lambda]_{1,2}=[\lambda]_1\cup [\lambda]_2, \quad [\lambda]_3=[\lambda]\setminus[\lambda]_{1,2}$. 	Since there is at most one element in $[\lambda]_1 $ and $[\lambda]_2 $, we denote them by  $(\lambda)_{(0)}$ and $(\lambda)_{(\frac{1}{2})}$.	
	
	Let  $ x=(\lambda_{i_1}, \lambda_{i_2},\cdots \lambda_{i_r})\in[\lambda]_3 $. Let $  y=(\lambda_{j_1}, \lambda_{j_2},\cdots, \lambda_{j_p}) $ be the maximal subsequence of $ x $ such that $ j_1=i_1 $ and the difference of any two entries of $ y$ is an integer. Let $ z= (\lambda_{k_1}, \lambda_{k_2},\cdots, \lambda_{k_q}) $ be the subsequence obtained by deleting $ y$ from $ x $, which is possible empty.
	Define
	$$  \tilde{x}=(\lambda_{j_1}, \lambda_{j_2},\cdots, \lambda_{j_p}, -\lambda_{k_q}, -\lambda_{k_{q-1}},\cdots ,-\lambda_{k_1}).  $$

For $\lambda_Y\in (\lambda)_{(0)}\cup (\lambda)_{(\frac{1}{2})}$, we can get a Young tableau $P(\lambda_Y^-)$.
	
We define
\begin{align*}
	F_a(x)&=\sum_{j\geq 1}\frac{c_j(c_j-1)}{2}= \sum_{k\geq 1} (k-1) p_k,
	\end{align*}
where $ p(x)=(p_1,p_2,\cdots,p_N) $ is the shape of the Young tableau $P(x)$ and $p(x)^t=(c_1,\cdots,c_K)$.

\begin{proposition}[{\cite[Theorem 4.6]{BXie}} and {\cite[Theorem 5.7]{BXX}} ]\label{gkdim}
	The GK dimension of  $ L(\lambda) $  can be computed as follows.
	\begin{enumerate}
		\item If $  \mathfrak{g}= \mathfrak{sl}(n,\mathbb{C})$,
		\[
		\GK L(\lambda)=\frac{n(n-1)}{2}-\sum _{x\in [\lambda]} F_a(x).
		\]
		
		\item If $  \mathfrak{g}= \mathfrak{sp}(n,\mathbb{C})$,
		\[
		\GK L(\lambda)=n^2- F_b((\lambda)_{(0)}^-)- F_d((\lambda)_{(\frac{1}{2})}^-)-\sum _{x\in [\lambda]_3} F_a(\tilde{x}).
		\]
		\item  If $  \mathfrak{g} = \mathfrak{so}(2n+1,\mathbb{C}) $,
		\[
		\GK L(\lambda)=	n^2-F_b((\lambda)_{(0)}^-)-F_b((\lambda)_{(\frac{1}{2})}^-)-\sum _{x\in [\lambda]_3} F_a(\tilde{x}).
		\]
		\item  If $  \mathfrak{g} = \mathfrak{so}(2n,\mathbb{C}) $,
		\[
		\GK L(\lambda)=n^2-n-F_d((\lambda)_{(0)}^-)-F_d((\lambda)_{(\frac{1}{2})}^-)-\sum _{x\in [\lambda]_3} F_a(\tilde{x}).
		\]
	\end{enumerate}
\end{proposition}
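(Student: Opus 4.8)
\noindent\emph{Proof strategy.} Identity (1) is \cite[Theorem 4.6]{BXie} and identities (2)--(4) are \cite[Theorem 5.7]{BXX}, so the plan is only to recall the architecture of the argument. The conceptual input is a theorem of Joseph: $\GK L(\lambda)=\tfrac{1}{2}\dim\overline{\mathcal{O}}$, where $\overline{\mathcal{O}}$ is the associated variety of the primitive ideal $\operatorname{Ann} L(\lambda)$, a single nilpotent orbit closure that depends only on the two-sided Kazhdan--Lusztig cell of the minimal-length element $w\in W_{[\lambda]}$ with $\lambda=w\mu$ and $\mu$ antidominant. The first step is to decompose along congruence classes: the integral root system $\Phi_{[\lambda]}$ is a direct product, with one factor of type $A_{|x|-1}$ for each $x\in[\lambda]_3$ and a single factor of the same Cartan type as $\mathfrak{g}$ assembled from the classes $(\lambda)_{(0)},(\lambda)_{(\frac{1}{2})}\in[\lambda]_{1,2}$. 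The cell of $w$, and therefore the codimension $\dim\mathfrak{n}-\GK L(\lambda)$, breaks up accordingly into one contribution per factor, and those contributions are precisely $F_a(\tilde{x})$, $F_b((\lambda)_{(0)}^-)$ and $F_d((\lambda)_{(\frac{1}{2})}^-)$; I would make the additivity precise from the behaviour of Joseph orbits under the product decomposition of $W_{[\lambda]}$, an empty class $(\lambda)_{(0)}$ or $(\lambda)_{(\frac{1}{2})}$ simply contributing $0$.

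Next I would evaluate each factor's contribution combinatorially. For a type-$A_{m-1}$ factor carried by the straightened sequence $\tilde{x}$ --- obtained from $x$ by reversing and negating the entries not joined to the first one by an integral difference, so that all entries of $\tilde{x}$ then differ by integers --- the associated Joseph orbit has Jordan type governed by the shape $p(\tilde{x})$ of the Robinson--Schensted tableau of $\tilde{x}$; combining the $\mathfrak{gl}_m$ orbit-dimension formula $\dim\mathcal{O}=m^2-\sum_j q_j^2$ (where $q=p(\tilde{x})^t$) with $\sum_j q_j=m$ and $F_a(\tilde{x})=\sum_j q_j(q_j-1)/2$ turns half of $\dim\mathcal{O}$ into $\tfrac{m(m-1)}{2}-F_a(\tilde{x})$. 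This gives part (1) and the $[\lambda]_3$-summands of (2)--(4); part (1) in an integral block is also covered by Theorem \ref{mainA} via Kazhdan--Lusztig cells \cite{MCb}. For the remaining factor one uses the folding $\lambda\mapsto\lambda^-$, which realizes the Weyl group of $\mathfrak{g}$ inside $S_{2n}$ and realizes the relevant $\mathfrak{g}$-orbit as the $X$-collapse ($X=C,B,D$, with an extra box adjoined in the $B$ case) of the type-$A$ orbit attached to $P(\lambda^-)$; by the discussion of collapse and the H-algorithm in \S\ref{Pre}, this orbit, hence its dimension, is controlled by the odd boxes of $P(\lambda^-)$ for types $B,C$ and by the even boxes for type $D$, which is exactly $F_b$ and $F_d$. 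The parity split in (2) --- the integral class contributing $F_b((\lambda)_{(0)}^-)$ while the half-integral class contributes $F_d((\lambda)_{(\frac{1}{2})}^-)$ for $\mathfrak{sp}(n,\mathbb{C})$ --- comes from the $\rho$-shift: for a half-integral weight of type $C_n$ the coroots $2e_i$ fail to be integral, so its integral subsystem has type $D_n$, whereas for types $B_n$ and $D_n$ a half-integral weight keeps the Cartan type of $\mathfrak{g}$. When $\lambda$ is integral one has $[\lambda]_3=\emptyset$ and $(\lambda)_{(\frac{1}{2})}=\emptyset$, and (2)--(4) collapse back to Proposition \ref{thm2}.

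The hard part is the last ingredient: proving that Garfinkle's domino-tableau description of Kazhdan--Lusztig cells in types $B,C,D$, and hence the special orbit it produces, is faithfully recorded by the odd/even box statistics of the ordinary type-$A$ tableau $P(\lambda^-)$. In \cite{BXie,BXX} this is carried out by running the Robinson--Schensted bumping process on the palindromic sequence $\lambda^-$ alongside domino insertion, and by the H-algorithm of \S\ref{Pre}, which replaces the domino-type partition $p(\lambda^-)$ by the special partition of the orbit while preserving odd boxes for types $B,C$ and even boxes for type $D$. Granting that analysis, the four displayed formulas follow from the reduction of the first paragraph and the orbit-dimension bookkeeping of the second.
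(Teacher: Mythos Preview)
The paper does not prove this proposition at all: it is stated with attribution to \cite[Theorem 4.6]{BXie} and \cite[Theorem 5.7]{BXX} and used as a black box. Your proposal therefore goes well beyond the paper, which simply cites the result; there is no argument in the paper to compare against.

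That said, as a sketch of the proofs in \cite{BXie,BXX} your outline is broadly faithful: reduction to the integral Weyl group $W_{[\lambda]}$, the product decomposition of $\Phi_{[\lambda]}$ into type-$A$ factors from $[\lambda]_3$ together with classical factors from $(\lambda)_{(0)}$ and $(\lambda)_{(\frac12)}$, the Robinson--Schensted/orbit-dimension computation for each type-$A$ piece, and the domino/H-algorithm analysis for the classical piece. One small imprecision: the classes $(\lambda)_{(0)}$ and $(\lambda)_{(\frac12)}$ do not assemble into ``a single factor of the same Cartan type as $\mathfrak{g}$''; they give two separate classical factors (possibly of different types, e.g.\ $C_a\times D_b$ for $\mathfrak{sp}$), which is why the formulas carry two distinct $F$-terms. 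Your later paragraph handles this correctly, so it is only the phrasing in the first paragraph that should be adjusted. Also, invoking Theorem~\ref{mainA} for the integral type-$A$ case is circular in this paper, since Theorem~\ref{mainA} is itself stated without proof and deduced from the same circle of ideas; just cite \cite{BXie} directly there.
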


From the above Proposition \ref{gkdim}, we can see that a non-integral  highest weight module $L(\lambda)$  in $\mathcal{O}^{\mathfrak{p}}$  is socular  if and only if all of the functions `$F$' in the summation of $\GK L(\lambda)$ take the minimal values.

Thus similar to type $A$, we have the following result.
\begin{corollary}
    Let $\mathfrak{g}=\mathfrak{so}(2n+1,\mathbb{C})$, $\mathfrak{sp}(n,\mathbb{C})$ or $\mathfrak{so}(2n+1,\mathbb{C})$. Let $\frp$ be a parabolic subalgebra of type $(n_1, n_2,\cdots,n_k)$. A non-integral  highest weight module $L(\lambda)$  in $\mathcal{O}^{\mathfrak{p}}$  is socular  if and only if $(\lambda)_{(0)}$, $(\lambda)_{(\frac{1}{2})}$ and all $\tilde{x}$ for $x\in [\lambda]_3$ are socular integral weights   in the corresponding parabolic categories.
\end{corollary}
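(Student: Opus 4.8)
The reduction recorded just before the corollary says that, via Proposition~\ref{gkdim}, socularity of $L(\lambda)$ is equivalent to minimality of every ``$F$''-term appearing in the expression for $\GK L(\lambda)$. To turn this into the statement of the corollary I would identify the parabolic category attached to each such term and match up the minima with socularity of the corresponding piece; the argument is then formally parallel to the type-$A$ corollary above.

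First I would make the reduction precise. Because $L(\lambda)\in\mathcal O^{\mathfrak p}$, the coordinates of $\lambda$ lying in a single block of the Levi $\mathfrak l_I=\mathfrak{gl}_{n_1}\times\cdots\times\mathfrak{gl}_{n_{k-1}}\times\mathfrak{k}_{n_k}$ (with $\mathfrak{k}_{n_k}$ the classical simple algebra of the same type and rank $n_k$) form a strictly decreasing chain with integer steps, hence all lie in one class of $[\lambda]$; moreover the coordinates of the $\mathfrak{k}_{n_k}$-block satisfy $2\lambda_n\in\mathbb Z$, so they lie in $[\lambda]_1$ or $[\lambda]_2$ and never in $[\lambda]_3$. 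Thus $[\lambda]$ partitions the Levi blocks, and each of $(\lambda)_{(0)}$, $(\lambda)_{(\frac12)}$ and $\tilde x$ $(x\in[\lambda]_3)$ lies in a parabolic category $\mathcal O^{\mathfrak{p}_c}$ of the appropriate classical type whose Levi is the product of the blocks assigned to that class: for the half-integral piece in type $C$ this category is of type $D$ — the one making $F_d((\lambda)_{(\frac12)}^-)$ in Proposition~\ref{gkdim}(2) natural — while for $x\in[\lambda]_3$ the operation $\tilde x=(y,-z^{\mathrm{rev}})$ only reverses and negates and so preserves block sizes, so $\mathfrak{p}_x$ is the type-$A$ parabolic with those sizes (and $\tilde x$, having pairwise integral differences, is integral up to a harmless global shift; one also checks that each piece does indeed lie in the stated $\mathcal O^{\mathfrak{p}_c}$, since restricting to whole blocks, reversing and negating all preserve strictly decreasing integer chains). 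Finally, since the formulas of Section~\ref{Pre} exhibit $|\Phi^+_I|$ as the sum over all Levi blocks of the corresponding $|\Phi^+|$ — namely $\binom{n_j}{2}$ for each $\mathfrak{gl}_{n_j}$ and $n_k^2$ (types $B,C$) or $n_k^2-n_k$ (type $D$) for $\mathfrak{k}_{n_k}$ — I obtain the additivity
\[
|\Phi^+_I|=|\Phi^+_{I_{(0)}}|+|\Phi^+_{I_{(\frac12)}}|+\sum_{x\in[\lambda]_3}|\Phi^+_{I_x}|.
\]

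Then I would conclude. For each single congruence class, Proposition~\ref{thm2} (resp.\ the type-$A$ formula of Proposition~\ref{gkdim}(1)) together with the bound $\GK\le\dim\mathfrak u$ for the corresponding sub-algebra shows that the matching $F$-term in $\GK L(\lambda)$ is at least the number $|\Phi^+|$ of its sub-parabolic, with equality precisely when the corresponding piece is socular there — this is the content of Lemmas~\ref{UniquenessBC} and~\ref{UniquenessD} (the types $B$, $C$ and the type-$D$ piece arising from type $C$) and of Theorem~\ref{mainA} (the type-$A$ pieces $\tilde x$). Combined with the additivity above, the total of the $F$-terms is $\ge|\Phi^+_I|$, with equality if and only if every term is minimal, i.e.\ if and only if $(\lambda)_{(0)}$, $(\lambda)_{(\frac12)}$ and all $\tilde x$ are socular. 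Since $L(\lambda)$ is socular $\iff\GK L(\lambda)=d_m(\mathfrak p)=C_n-|\Phi^+_I|\iff$ the total of the $F$-terms equals $|\Phi^+_I|$ (here $C_n=n^2$ in types $B,C$ and $C_n=n^2-n$ in type $D$), the two conditions coincide and the corollary follows.

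The step I expect to be the main obstacle is the middle one: correctly pinning down the sub-parabolic category attached to each congruence class — in particular the $C\rightsquigarrow D$ switch on the half-integral part and the block-size bookkeeping for the ``sum-or-difference'' classes of $[\lambda]_3$ — and verifying that their $|\Phi^+|$'s really add up to $|\Phi^+_I|$. Once that is settled, what remains is only the elementary observation that a sum of quantities, each at least its own minimum, reaches the sum of the minima precisely when every summand is minimal.
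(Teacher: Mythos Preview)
Your proposal is correct and follows exactly the line the paper sketches: the paper's entire argument is the sentence preceding the corollary (``socular $\iff$ every $F$-term in Proposition~\ref{gkdim} is minimal'') together with ``similar to type $A$'', and you have simply made that sketch honest by naming the sub-parabolic attached to each congruence class, checking the additivity $|\Phi^+_I|=\sum_c|\Phi^+_{I_c}|$, and invoking Lemmas~\ref{UniquenessBC}, \ref{UniquenessD} and Theorem~\ref{mainA} for the individual minima. The only point worth recording is the one you already flagged: the paper never specifies what ``the corresponding parabolic categories'' are, and in particular never mentions the $C\rightsquigarrow D$ switch on the half-integral piece that your reading of Proposition~\ref{gkdim}(2) forces; your identification is the right one, and with it the bookkeeping goes through as you describe.
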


\section{Richardson orbits associated to  parabolic subalgebras}
\label{richar}
In this section, we will give the partition type for the Richardson orbit $\mathcal{O}$ associated to a   standard parabolic subalgebra $\frp$  of type $(n_1, n_2,\cdots,n_k)$. We use ${\bf p}=[p_1,\cdots,p_k]$ to denote a partition of an integer $n$.

 First  we recall some results about nilpotent orbits of classical types. Some details can be found in \cite{CM93}.

\begin{proposition}[{\cite[Theorems 5.1.2, 5.1.3 $\&$ 5.1.4 and Proposition 6.3.7]{CM93}}]\label{bcd}
The nilpotent orbits of classical types can be identified with some partitions as follows:
\begin{enumerate}
    \item Nilpotent orbits in $\mathfrak{so}{(2n+1,\mathbb{C})}$ are in one-to-one correspondence with the set of partitions of $2n+1$ in which even parts occur with even multiplicity. A partition $\bf q$ of type $B$ is special if and only if its dual partition ${\bf q}^t$ is a partition of type $B$.
    \item Nilpotent orbits in $\mathfrak{sp}{(n,\mathbb{C})}$ are in one-to-one correspondence with the set of partitions of $2n$ in which odd parts occur with even multiplicity. A partition $\bf q$ of type $C$ is special if and only if its dual partition ${\bf q}^t$ is a partition of type $C$.
    \item Nilpotent orbits in $\mathfrak{so}{(2n,\mathbb{C})}$ are in one-to-one correspondence with the set of partitions of $2n$ in which even parts occur with even multiplicity,
  except that each ``very even" partition ${\bf d}$ (consisting of only even parts) correspond to two orbits, denoted by $\mathcal{O}^I_{\bf d}$ and $\mathcal{O}^{II}_{\bf d}$. A partition $\bf q$ of type $D$ is special if and only if its dual partition ${\bf q}^t$ is a partition of type $C$.
\end{enumerate}
\end{proposition}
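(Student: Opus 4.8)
The statement is entirely classical and is precisely \cite[Thms.~5.1.2, 5.1.3, 5.1.4 and Prop.~6.3.7]{CM93}; accordingly my plan is to recall the two ingredients that go into it rather than to reprove anything from scratch. In the paper itself I will simply cite \cite{CM93}.

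\textbf{Classification of the orbits.} Realize $G$ as the isometry group of a nondegenerate bilinear form $\beta$ on $V=\mathbb{C}^N$ ($N=2n+1$ in type $B$, $N=2n$ in types $C,D$), symmetric in types $B,D$ and alternating in type $C$. Given a nilpotent $e\in\mathfrak{g}$, attach an $\mathfrak{sl}_2$-triple $(e,h,f)$ by Jacobson--Morozov and decompose the natural module as $V\cong\bigoplus_{i\ge1}V_i\otimes M_i$, where $M_i$ is the irreducible $\mathfrak{sl}_2$-module of dimension $i$ and $V_i=\Hom_{\mathfrak{sl}_2}(M_i,V)$; the partition attached to the $G$-orbit of $e$ is then its Jordan type $[\dots,i^{\dim V_i},\dots]$. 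Since $\beta$ is $\mathfrak{sl}_2$-invariant and the $M_i$ are pairwise non-isomorphic and self-dual, $\beta$ restricts to a nondegenerate pairing on each summand $V_i\otimes M_i$; using that the canonical $\mathfrak{sl}_2$-invariant form on $M_i$ is symmetric for $i$ odd and alternating for $i$ even, $\beta$ is forced to induce a nondegenerate form on each $V_i$ whose symmetry is dictated by the parity of $i$ and the type of $\beta$. A nondegenerate alternating form needs an even-dimensional space, so in types $B,D$ one gets $\dim V_i$ even whenever $i$ is even, and in type $C$ one gets $\dim V_i$ even whenever $i$ is odd; these are the stated multiplicity conditions. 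That every admissible partition occurs, and that it is a complete invariant of the $O(V)$- (resp.\ $Sp(V)$-) orbit, follows from uniqueness up to isometry of the collection of forms on the $V_i$. Passing from $O(V)$ to $SO(V)$ in type $D$, the reductive part of $Z_{O(V)}(e)$ is $\prod_{i\ \mathrm{odd}}O(\dim V_i)\times\prod_{i\ \mathrm{even}}Sp(\dim V_i)$, and an element of determinant $-1$ in a factor $O(\dim V_i)$ with $i$ odd acts on $V$ with determinant $-1$; hence $Z_{O(V)}(e)\subseteq SO(V)$, i.e.\ the $O(V)$-orbit splits into $\mathcal{O}^I_{{\bf d}}$ and $\mathcal{O}^{II}_{{\bf d}}$, exactly when ${\bf d}$ has no odd parts, i.e.\ is very even.

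\textbf{The ``special'' criterion.} Here I would invoke Lusztig's notion of a special orbit (the orbit whose Springer representation is special in Lusztig's sense) together with the Barbasch--Vogan--Spaltenstein order-reversing duality map $d$ between nilpotent orbits of a group and its Langlands dual: the image of $d$ is exactly the set of special orbits and $d$ restricts to an involution there. For the dual pairs $(B_n,C_n)$, $(C_n,B_n)$ and $(D_n,D_n)$ the effect of $d$ on partitions is ``transpose, then collapse (Definition~\ref{collapse}) to the target type,'' up to the single-box adjustment relating partitions of $2n$ and $2n+1$, as recorded in \cite[\S6.3]{CM93}. Writing out ``${\bf q}$ is special $\iff d(d({\bf q}))={\bf q}$'' with this recipe, the second collapse is vacuous precisely when the transpose ${\bf q}^t$ already lies in the relevant target type, and keeping track of which target type appears in each case yields the assertion: a type-$B$ partition is special iff ${\bf q}^t$ is of type $B$, a type-$C$ partition iff ${\bf q}^t$ is of type $C$, a type-$D$ partition iff ${\bf q}^t$ is of type $C$. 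Alternatively one argues directly with the special symbols attached to the partition, which is the route of \cite{CM93}.

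The only real difficulty is bookkeeping rather than ideas: pinning down which $\mathfrak{sl}_2$-module carries a symmetric versus an alternating invariant form, describing precisely the component group of $Z_{O(V)}(e)$ governing the type-$D$ splitting, and fixing the exact normalization of $d$ (in particular the box added or removed to pass between $2n$ and $2n+1$). Since all of this is done carefully in \cite{CM93}, the paper will just quote that reference.
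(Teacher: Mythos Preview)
Your proposal is correct and matches the paper's treatment exactly: the paper gives no proof for this proposition and simply records it with a citation to \cite{CM93}. Your additional sketch of the Jacobson--Morozov/invariant-form argument and the duality characterisation of special orbits is accurate background, but none of it appears in the paper itself.
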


\begin{proposition}[{\cite[Page 41]{Ja}} and {\cite[Page 118]{CM93}}]\label{special} The Richardson orbit $\mathcal{O}$ associated to   a  standard parabolic subalgebra $\frp$ of type $(n_1, n_2,\cdots,n_k)$ is unique and special. We have  $\dim \mathcal{O}=2\dim(\mathfrak{u})$. 
      
  \end{proposition}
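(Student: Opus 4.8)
The plan is to treat uniqueness, speciality, and the dimension formula as three separate assertions, all of which are classical facts drawn from the literature (Jantzen \cite{Ja} and Collingwood--McGovern \cite{CM93}), and to assemble them rather than reprove them from scratch. First I would recall the definition of the Richardson orbit: given $\frp = \frl \oplus \fru$, the group $G$ acts on $\fru$ (viewed inside $\gg$ via the Killing form identification $\gg \cong \ggs$), and one shows that the action of the parabolic subgroup $P$ on $\fru$ has a dense orbit; the $G$-saturation $G\cdot\fru$ of $\fru$ is then the closure of a single nilpotent orbit $\mathcal{O}$, and this orbit meets $\fru$ in a dense open subset. Uniqueness is immediate from this construction: $\mathcal{O}$ is characterized as the unique nilpotent orbit whose intersection with $\fru$ is dense, so it depends only on $\frp$ (indeed only on the $G$-conjugacy class of $\frp$, i.e.\ on the type $(n_1,\dots,n_k)$).

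For the dimension formula, the key point is a standard transversality / fibration argument: the moment map $G \times_P \fru \to \overline{\mathcal{O}}$ is a resolution of singularities (the Springer-type resolution attached to $\frp$), generically finite, indeed birational onto $\overline{\mathcal{O}}$ because the dense $P$-orbit in $\fru$ has stabilizer whose dimension equals $\dim \frl - \dim\mathcal{O}/2$ in the right way; comparing dimensions gives $\dim \mathcal{O} = \dim(G\times_P \fru) = \dim G - \dim P + \dim \fru = \dim(\gg/\frp) + \dim\fru = 2\dim\fru$, since $\dim(\gg/\frp) = \dim\fru$. I would cite \cite[Page 41]{Ja} and \cite[Page 118]{CM93} for this computation rather than carrying out the birationality argument in detail.

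For speciality, I would invoke the characterization recalled in Proposition \ref{bcd}: a nilpotent orbit of type $B$, $C$ or $D$ is special exactly when the transpose of its partition is again a partition of the appropriate type (type $B$ for $B$, type $C$ for both $C$ and $D$). The fact that Richardson orbits are always special is part of Lusztig--Spaltenstein theory and is recorded in \cite[Page 118]{CM93}; one way to see it conceptually is that the Richardson orbit is the orbit attached (under the Springer correspondence) to the trivial representation of the relevant relative Weyl group, which always lies in the special family, or alternatively that induction of nilpotent orbits from a Levi preserves speciality and $\mathcal{O} = \mathrm{Ind}_{\frl}^{\gg}(0)$. I would simply cite this.

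The main obstacle is not mathematical depth — every ingredient is classical — but rather making precise the bookkeeping between the parabolic of "type $(n_1,\dots,n_k)$" in the sense fixed earlier in the paper (where the last block $n_k$ plays a distinguished role because of the short/long root at the end of the Dynkin diagram) and the Levi $\frl \cong \gl_{n_1} \times \cdots \times \gl_{n_{k-1}} \times \gg(n_k)$ with $\gg(n_k)$ of the same classical type; one must check the dimension count $2\dim\fru = \dim G - \dim\frl$ is consistent with the explicit values of $|\Phi_I^+|$ given in the two lemmas preceding Proposition \ref{thm2}. Since those formulas were already established, the argument reduces to quoting them, and no genuinely new work is required.
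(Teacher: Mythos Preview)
The paper does not give a proof of Proposition \ref{special} at all: the proposition is stated with a bracketed citation to \cite[Page 41]{Ja} and \cite[Page 118]{CM93} and is simply quoted as a known fact, with no argument following it. Your proposal is therefore correct and in fact strictly more detailed than the paper, which treats the result as a black-box citation; your sketch of uniqueness, the dimension count via $G\times_P\fru\to\overline{\mathcal{O}}$, and speciality via Lusztig--Spaltenstein induction is the standard way these facts are proved in the cited references, so there is no discrepancy in approach, only in level of detail.
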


\begin{theorem}
    Suppose  $\mathfrak{g}=\mathfrak{sl}(n,\mathbb{C})$. Let $\frp$ be a  standard parabolic subalgebra of type $(n_1, n_2,\cdots,n_k)$. The Richardson orbit $\mathcal{O}$ associated to $\frp $ has partition ${\bf p}=[m_1,\cdots,m_k]^t$, where $(m_1,\cdots,m_k)$ is the  arrangement of the sequence $(n_1,n_2,..,n_{k})$ in descending order.
\end{theorem}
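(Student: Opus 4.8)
The plan is to combine the dimension formula for Richardson orbits in type $A$ with the combinatorial description of socular modules from Theorem~\ref{mainA}. First I would recall that for $\mathfrak{sl}(n,\mathbb{C})$, a parabolic subalgebra $\mathfrak{p}$ of type $(n_1,\dots,n_k)$ has a Levi factor whose block sizes are a permutation of $(n_1,\dots,n_k)$, and the classical result of Richardson (see \cite{CM93}, \cite{Ja}) identifies the Richardson orbit directly: the nilpotent orbit meeting $\mathfrak{u}_I$ densely is the one whose Jordan type is the \emph{transpose} of the partition obtained by sorting $(n_1,\dots,n_k)$. Indeed, a generic element of $\mathfrak{u}_I$ acts on $\mathbb{C}^n$ with Jordan blocks read off from the column structure of the Young diagram with row lengths $(m_1,\dots,m_k)$, so its Jordan type is $[m_1,\dots,m_k]^t$. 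This is essentially the statement to be proved, so the ``proof'' is really a matter of assembling the pieces already available in the paper.

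Concretely, the key steps in order would be: (1) By Proposition~\ref{special}, the Richardson orbit $\mathcal{O}$ associated to $\mathfrak{p}$ is the unique orbit with $\dim\mathcal{O}=2\dim(\mathfrak{u})$; equivalently, by Irving's result quoted in the introduction, $\mathcal{O}$ is the associated variety of any socular module $L(\lambda)\in\mathcal{O}^{\mathfrak{p}}$. (2) By Theorem~\ref{mainA}, a simple integral $L(\lambda)$ in $\mathcal{O}^{\mathfrak{p}}$ is socular exactly when $p(\lambda)^t=(m_1,\dots,m_k)$, i.e.\ when the shape $p(\lambda)$ equals the transpose partition $[m_1,\dots,m_k]^t$. (3) Invoke the classical fact (Joseph, Barbasch--Vogan; see \cite{CM93}) that in type $A$ the associated variety of $L(\lambda)$ is the nilpotent orbit whose Jordan type is precisely $p(\lambda)$, computed via the Robinson--Schensted shape of $\lambda$. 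Combining (1), (2), (3): the orbit $\mathcal{O}$ attached to $\mathfrak{p}$ has partition $p(\lambda)=[m_1,\dots,m_k]^t$.

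Alternatively, and more self-containedly, one can argue directly: a generic $X\in\mathfrak{u}_I$ can be taken, after permuting blocks, to be a ``generic upper-triangular'' map whose restriction to each consecutive pair of Levi blocks of sizes $m_i\geq m_{i+1}$ has maximal rank $m_{i+1}$. A short linear-algebra computation (or the standard picture of stacking the blocks into columns of a Young diagram with row lengths $m_1\geq m_2\geq\cdots\geq m_k$) shows the Jordan type of $X$ is read column-by-column, giving Jordan blocks of sizes $[m_1,\dots,m_k]^t$. Since the orbit of such a generic $X$ is dense in $\mathfrak{u}_I$ by definition of the Richardson orbit, we are done. One should also check the dimension count $\dim\mathcal{O}=2\dim\mathfrak{u}$ matches the formula $\dim\mathcal{O}=n^2-\sum_i (p^t_i)^2$ with $p^t=(m_1,\dots,m_k)$, as a consistency check against Proposition~\ref{special}.

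The main obstacle is essentially bookkeeping rather than a genuine difficulty: one must be careful that the Robinson--Schensted shape of a \emph{maximal standard weight} of type $(n_1,\dots,n_k)$ is indeed $[m_1,\dots,m_k]^t$ (columns have the sorted sizes $m_i$), and that the normalization of ``type $(n_1,\dots,n_k)$'' for $\mathfrak{p}$ matches the block-size conventions used in the cited Richardson-orbit references so that no spurious transpose is introduced. Once the conventions are aligned, the statement follows immediately from Theorem~\ref{mainA} together with the known description of the associated variety in type $A$, so this theorem can reasonably be given a very short proof.
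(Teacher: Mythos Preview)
Your proposal is correct, but it is considerably more elaborate than what the paper actually does: the paper's entire proof is the one-line citation ``See \cite[Theorem~7.2.3]{CM93}.'' In other words, the authors treat this type $A$ statement as a classical fact and defer to Collingwood--McGovern, which is essentially your second ``self-contained'' alternative (the generic element of $\mathfrak{u}_I$ has Jordan type read off from the columns of the diagram with rows $m_1\geq\cdots\geq m_k$).

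Your first approach---deducing the Richardson orbit from Theorem~\ref{mainA} via the associated variety of a socular $L(\lambda)$---is not what the paper does here, but it is precisely the template the paper uses later for types $B$, $C$, $D$ (Theorems~\ref{rich-b}--\ref{rich-d}): there the authors identify the Richardson orbit as $\mathcal{O}_{w_0^{\mathfrak{p}}}$, invoke that a socular $L(\lambda)$ has $w$ in the same KL right cell as $w_0^{\mathfrak{p}}$, and then read off the partition from the Z-diagram via the H-algorithm. So your first route is a genuine alternative in type $A$ that has the virtue of unifying the argument across all classical types, at the cost of invoking the Joseph/Barbasch--Vogan description of associated varieties in type $A$, which the paper does not otherwise need. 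The paper's choice to simply cite \cite{CM93} in type $A$ is shorter but breaks that uniformity.
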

\begin{proof}

    See \cite[Theorem 7.2.3]{CM93}.
    \end{proof}

\begin{theorem}\label{rich-b}
    Suppose  $\mathfrak{g}=\mathfrak{so}(2n+1,\mathbb{C})$. Let $\frp$ be a  standard parabolic subalgebra of type $(n_1, n_2,\cdots,n_k)$. The Richardson orbit $\mathcal{O}$ associated to $\frp $ has partition $${\bf p}=[p_1,p_2,\cdots,p_{_{2m_s}},p_{_{2m_s+1}}+1,p_{_{2m_s+2}},\cdots,p_N]_{_B},$$
    where $p=(p_1,\cdots,p_N)$ is the shape of the Z-diagram  $P$ of type  $(n_k; n_1,\cdots,n_{k-1})$ so that $${p}^t=(m_1,m_1,m_2,m_2,\cdots,m_{s-1},m_{s-1,},2m_s,m_{s+1},m_{s+1},\cdots,m_{k},m_{k}).$$ Here $(m_1,\cdots,m_k)$ is the  arrangement of the sequence $(n_1,n_2,..,n_{k})$ in descending order and $m_s=n_k$.
\end{theorem}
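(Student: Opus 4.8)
The plan is to reduce the statement to the computation of a single nilpotent orbit: first realize $\mathcal{O}$ as the orbit attached to a socular module, then run the type-$B$ H-algorithm on the Z-diagram. For the first step, by Theorem \ref{mainBC} (equivalently, by Lemma \ref{UniquenessBC}, which identifies socular weights with maximal standard weights) there is an integral socular weight $\lambda$, so $\GK L(\lambda)=\dim(\fru)$. It is standard that the associated variety of any module in $\mathcal{O}^{\frp}$ is contained in $\overline{G\cdot\fru}=\overline{\mathcal{O}}$, so $\mathrm{AV}(L(\lambda))\subseteq\overline{\mathcal{O}}$; on the other hand the primitive ideal $\mathrm{Ann}\,L(\lambda)$ has associated variety $\overline{\mathcal{O}_\lambda}$ for a \emph{special} nilpotent orbit $\mathcal{O}_\lambda$ with $\dim\mathcal{O}_\lambda=2\,\GK L(\lambda)=2\dim(\fru)$, and $\overline{\mathcal{O}_\lambda}$ is the $G$-saturation of $\mathrm{AV}(L(\lambda))$, whence $\overline{\mathcal{O}_\lambda}\subseteq\overline{\mathcal{O}}$. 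Since $\dim\mathcal{O}=2\dim(\fru)$ by Proposition \ref{special}, this forces $\mathcal{O}=\mathcal{O}_\lambda$, and it remains to determine the partition of $\mathcal{O}_\lambda$.

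For the second step, by \cite{BMW} (cf.\ \cite{BXX}) the partition of $\mathcal{O}_\lambda$ is the special partition produced from the domino type partition $p(\lambda^{-})$ by the H-algorithm of type $B$, i.e.\ it equals $(p(\lambda^{-})_{\bar B})^{\bar B}$. By the Remark following Definition \ref{collapse}, restricted collapse and restricted expansion do not change the sub-diagram of odd boxes, so this partition depends only on $P(\lambda^{-})\od$. As $\lambda$ is socular, Theorem \ref{Z-diagram of odd boxes} identifies $P(\lambda^{-})\od$ with the odd boxes of the Z-diagram $P$ of type $(n_k;n_1,\dots,n_{k-1})$; writing $p=(p_1,\dots,p_N)$ for the shape of $P$, the Richardson orbit therefore has partition $H(p)$.

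The third step, evaluating $H(p)$, is the crux. Here $p^{t}=(m_1,m_1,\dots,m_{s-1},m_{s-1},2m_s,m_{s+1},m_{s+1},\dots,m_k,m_k)$ and $p\od$ is given explicitly by Corollary \ref{numberev}. I would run the H-algorithm of type $B$ directly on $p$: the consecutive pairs of equal columns of $P$ coming from its $B$-dominoes are exactly the consecutive rows ending in $\tytb{O,\none O}$ that the labelling step must skip, so the rows that receive an appended $\tytb{E}$ box — the only rows at which the shape can change — are controlled by the height-$2m_s$ column of $A$-dominoes together with the parities of $m_{s-1},m_s,m_{s+1}$ at its interface with the $B$-dominoes. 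Filling the holes and appending the terminal box (to pass from $2n$ to $2n+1$ boxes) yields a partition of $2n+1$, and a careful bookkeeping shows it equals the ordinary $B$-collapse of $p$ after a single box has been added in row $2m_s+1$: the terminal box migrates, under hole-filling and restricted expansion, into row $2m_s+1$, while the residual redistribution of even boxes is exactly an ordinary $B$-collapse. As an independent check, one may note that $[p_1,\dots,p_{2m_s},p_{2m_s+1}+1,\dots,p_N]$ is the transpose of $(m_1,m_1,\dots,m_{s-1},m_{s-1},2m_s+1,m_{s+1},m_{s+1},\dots,m_k,m_k)$, which is the Richardson partition of the $\mathrm{GL}_{2n+1}$-parabolic with the same flag type as $\frp$; its $B$-collapse is the $\mathrm{SO}(2n+1,\mathbb{C})$-Richardson orbit by the classical description of Richardson orbits of classical groups (see \cite{CM93}).

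I expect Step 3 to be the genuine obstacle: one must match two different ``make it special'' procedures — the H-algorithm, which is a restricted $B$-collapse followed by a restricted $B$-expansion of $p$, against adding one box to $p$ and then performing an unrestricted $B$-collapse — uniformly over the whole family of Z-diagram shapes, tracking the parities of the $m_i$ and the final position of the single extra box. The boundary configurations ($s=1$, where the $A$-column is the tallest column of $P$; $s=k$; and the cases where the $B$-dominoes meet the $A$-column in an $\tytb{E,O}$ or $\tytb{O,\none O}$ pattern) require the most care, but they reduce to finitely many routine sub-cases once the generic pattern is understood.
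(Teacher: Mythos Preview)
Your outline is correct and matches the paper's overall strategy (identify the Richardson orbit with the nilpotent orbit attached to a socular $L(\lambda)$, then compute that orbit via the type-$B$ H-algorithm applied to the Z-diagram), but two of your steps are executed differently from the paper.

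In Step~1 the paper does not argue via associated varieties and a dimension count. Instead it writes $\lambda=w\mu$ with $\mu$ anti-dominant, cites \cite[Proposition~4.3]{I} and \cite[Theorem~48]{MCb} to place $w$ in the same Kazhdan--Lusztig right cell as $w_0^{\frp}$, and then uses the Springer correspondence to conclude $\mathcal{O}_w=\mathcal{O}_{w_0^{\frp}}$ directly. Your route is a valid alternative, but you are silently invoking nontrivial results (irreducibility of the annihilator variety and the relation $\dim V(\mathrm{Ann}\,L(\lambda))=2\,\GK L(\lambda)$) that would need precise citations.

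In Step~3 you anticipate much more work than is needed. The paper's computation of $H(p)$ is a short two-case argument: when $n_k>0$, the single $A$-column of height $2m_s$ forces rows $1,\dots,2m_s$ of $P$ to have \emph{odd} length (they meet the $A$-column plus an even number of paired $B$-columns) while rows $2m_s+1,\dots,N$ have \emph{even} length (they lie entirely in paired $B$-columns). The odd-length rows are fixed by the H-algorithm, and the residual even-row block is exactly the $n_k=0$ situation, where the H-algorithm adds one box to the top row and then performs an ordinary $B$-collapse. No sub-cases on $s=1$, $s=k$, interface patterns, or parities of the $m_i$ enter. Your ``independent check'' via \cite[Theorem~7.3.3]{CM93} is in fact a self-contained proof of the whole theorem; the paper does not take that shortcut because its point is precisely to realize the Richardson partition as $H(p)$ for the Z-diagram, tying the result back to Theorem~\ref{mainBC} and the machinery of \cite{BMW}.
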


\begin{proof}
From Theorem \ref{mainBC}, an integral highest weight module $L(\lambda)$  in $\mathcal{O}^{\mathfrak{p}}$  is socular  if and only if $P(\lambda^{-})$ has the same odd boxes as a Z-diagram of type  $(n_k; n_1,\cdots,n_{k-1})$.   Recall that in this case, we can write $\lambda=w\mu$ for a unique anti-dominant $\mu\in \mathfrak{h}^*$ and a unique minimal length element $w\in W$. By \cite[Proposition 4.3]{I} or \cite[Theorem 48]{MCb}, $L(\lambda)$ in $\mathcal{O}^{\mathfrak{p}}$ is socular  if and only if $w$ belongs to  the Kazhdan-Lusztig right cell containing $w_0^{\mathfrak{p}}$, where $w_0^{\mathfrak{p}}$
 is the longest element in the parabolic subgroup
of $W$ corresponding to the parabolic subalgebra $\mathfrak{p}$. By Springer correspondence, we have $\mathcal{O}_w=\mathcal{O}_{w_0^{\mathfrak{p}}}$ being special, where $\mathcal{O}_{w_0^{\mathfrak{p}}}$ is the desired Richardson orbit. From \cite{BMW}, $\mathcal{O}_w$ has the same odd diagram as $P(\lambda^{-})$. Thus $\mathcal{O}_w$ has the same odd diagram as the Z-diagram $P$ of type  $(n_k; n_1,\cdots,n_{k-1})$. From the Z-diagram  $P$, by using H-algorithm, we can get the special partition ${\bf p}$ corresponding to $\mathcal{O}_w=\mathcal{O}_{w_0^{\mathfrak{p}}}$.
   
   When $n_k=0$, $P$ is a Young diagram with shape $p=(p_1,\cdots,p_N)$ such that $$p^t=(m_1,m_1,m_2,m_2,\cdots,m_{k-1},m_{k-1}).$$
Note that this $p$ contains only  even parts. After the H-algorithm on $p$, $p_1$ becomes $p_1+1$, some $p_{2i}$ becomes $p_{2i}-1$ and $p_{2i+1}$ becomes $p_{2i+1}+1$ (when $p_{2i}$ occurs with odd multiplicity and  $p_{2i}>p_{2i+1}$). Recall that a partition $\bf q$ of type $B$ is special if and only if its dual partition ${\bf q}^t$ is of type $B$.  Thus from Proposition \ref{bcd} and \ref{special} we have ${\bf p}=[p_1+1,p_2,\cdots,p_N]_{_B}$ since it is special and has the same odd diagram as the Z-diagram $P$.

   When $n_k>0$, $P$ is a Young diagram with shape $p=(p_1,\cdots,p_N)$ such that $$p^t=(m_1,m_1,m_2,m_2,\cdots,m_{s-1},m_{s-1,},2m_s,m_{s+1},m_{s+1},\cdots,m_{k},m_{k}),$$
   where $m_s=n_k$.
 After the H-algorithm on $p$, the first $2m_s$ parts will not change since they are odd parts. We only need to do $B$-collapse on the rest parts. Thus we have $${\bf p}=H(p)=[p_1,p_2,\cdots,p_{_{2m_s}},p_{_{2m_s+1}}+1,p_{_{2m_s+2}},\cdots,p_N]_{_B}.$$

Combined the above two cases, we complete the proof.
    \end{proof}
\begin{example}
 Let   $\mathfrak{g}=\mathfrak{so}(23,\mathbb{C})$.  Suppose $\Delta\setminus I=\{\alpha_{1},\alpha_{4},\alpha_{9}\}$. Then the corresponding   parabolic subalgebra 
  $\mathfrak{p}_I$ is a standard parabolic subalgebra of type  $(1,3,5,2)$. By Theorem \ref{mainBC}, a simple integral highest weight  module $L(\lambda)$  in $\mathcal{O}^{\mathfrak{p}}$  is socular  if and only if $P(\lambda^-)$ has the same odd boxes as the following Z-diagram $P$ of type  $(2;1,3,5)$:  
\begin{center}
\begin{tikzpicture}[scale=\domscale-0.1,baseline=-15pt]
\vdom{2}{0}{A}
\vdom{2}{2}{A}
\hdom{0}{0}{B}
\hdom{0}{1}{B}
\hdom{0}{2}{B}
\hdom{0}{3}{B}
\hdom{0}{4}{B}
\hdom{3}{0}{B}
\hdom{3}{1}{B}
\hdom{3}{2}{B}
\hdom{5}{0}{B}
\end{tikzpicture}. 
\end{center}

The shape of $P(\lambda^-)\od$ is $p(\lambda^-)\od=p\od=(3,3,2,2,1)$, where $p$ is the shape of the Z-diagram $P$.

From this Z-diagram $P$, we can get a Richardson orbit $\mathcal{O}$ with partition ${\bf p}=[p_1,p_2,\cdots,p_{_{2m_s}},p_{_{2m_s+1}}+1,p_{_{2m_s+2}},\cdots,p_N]_{_B}=[7,5,5,3,2+1]_{_B}=[7,5,5,3,3]$.

For example, suppose $\lambda=(-12,-9,-10,-11,-4,-5,-6,-7,-8,4,3)\in \mathfrak{h}^*$. Then $L(\lambda)$  in $\mathcal{O}^{\mathfrak{p}}$  is socular. We can check that $P(\lambda^-)$ has the same odd boxes as the  Z-diagram $P$ of type  $(2;1,3,5)$.

\end{example}

\begin{theorem}\label{rich-c}
    Suppose  $\mathfrak{g}=\mathfrak{sp}(n,\mathbb{C})$. Let $\frp$ be a  standard parabolic subalgebra of type $(n_1, n_2,\cdots,n_k)$. The Richardson orbit $\mathcal{O}$ associated to $\frp $ has partition $${\bf p}=[p_1,p_2,\cdots,p_N]_{_C},$$
    where $p=(p_1,\cdots,p_N)$ is the shape of the Z-diagram  $P$ of type  $(n_k; n_1,\cdots,n_{k-1})$ so that $${p}^t=(m_1,m_1,m_2,m_2,\cdots,m_{s-1},m_{s-1,},2m_s,m_{s+1},m_{s+1},\cdots,m_{k},m_{k}).$$ Here $(m_1,\cdots,m_k)$ is the  arrangement of the sequence $(n_1,n_2,..,n_{k})$ in descending order and $m_s=n_k$.
\end{theorem}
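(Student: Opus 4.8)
The plan is to mirror the proof of Theorem~\ref{rich-b}, with the odd boxes of the Z-diagram again controlling everything. By Theorem~\ref{mainBC}, an integral $L(\lambda)\in\mathcal{O}^{\mathfrak{p}}$ is socular exactly when $P(\lambda^-)$ has the same odd boxes as the Z-diagram $P$ of type $(n_k;n_1,\cdots,n_{k-1})$. Writing $\lambda=w\mu$ with $\mu$ anti-dominant and $w$ of minimal length and using \cite[Proposition 4.3]{I} (or \cite[Theorem 48]{MCb}), this says $w$ lies in the Kazhdan--Lusztig right cell of $w_0^{\mathfrak{p}}$, so by the Springer correspondence $\mathcal{O}_w=\mathcal{O}_{w_0^{\mathfrak{p}}}$ is special and is the Richardson orbit attached to $\mathfrak{p}$; by \cite{BMW} its partition is $H(p(\lambda^-))$, the output of the $H$-algorithm of type $C$. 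Since that algorithm depends only on the odd diagram, $H(p(\lambda^-))=H(p)$ where $p=(p_1,\cdots,p_N)$ is the shape of $P$. So the theorem reduces to the combinatorial statement $H(p)=[p_1,\cdots,p_N]_C$.

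To prove this I would first record the shape of $p$: from $p^t=(m_1,m_1,\cdots,m_{s-1},m_{s-1},2m_s,m_{s+1},m_{s+1},\cdots,m_k,m_k)$ --- in which every part is either even (the single part $2m_s$) or occurs in a pair --- one sees that $p$ consists of a block of odd-length rows followed by a block of even-length rows, the transition happening at row $2m_s$. Since a type $C$ nilpotent partition and $P$ both have $2n=2(n_1+\cdots+n_k)$ boxes, no box must be appended, which is why (unlike type $B$) no ``$+1$'' correction shows up. I would then check that $[p_1,\cdots,p_N]_C$ has the same odd diagram as $p$ --- immediate, as the $C$-collapse fixes odd boxes (the Remark after Definition~\ref{collapse}) --- and that it is special, i.e.\ by Proposition~\ref{bcd}(2) that its transpose is again of type $C$; granting these, $[p_1,\cdots,p_N]_C$ and $H(p)$ are both special with the same odd diagram, hence equal. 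The degenerate case $n_k=0$ (where $p^t$ has only paired parts, so $p$ is itself special of type $C$ and $[p_1,\cdots,p_N]_C=p$) I would dispose of first; it is strictly simpler here than in type $B$, again because there is no parity correction.

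The main obstacle is the specialness check in the previous paragraph: tracing the $C$-collapse through the explicit $p$ and confirming that its transpose is of type $C$, equivalently (via $H(p)=(p_{\bar C})^{\bar C}$ from the Remark) that the restricted collapse $p_{\bar C}$ is already special so that no expansion step intervenes. This is a finite case analysis on the multiplicities of the parts of $p$, which are governed by the gaps among $m_1,\cdots,m_k$ and by the location $s$ of the singleton column $2m_s$; the structural fact that $p$ is an odd-length block stacked over an even-length block with the break exactly at row $2m_s$ is what keeps the collapse from upsetting the specialness of the transpose.
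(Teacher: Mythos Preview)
Your proposal is correct and follows the paper's architecture: the same reduction via Theorem~\ref{mainBC}, the right-cell characterization of socular modules, and \cite{BMW} to identify the Richardson orbit partition with $H(p)$, leaving only the combinatorial identity $H(p)=[p_1,\cdots,p_N]_C$. The one difference is in how that identity is justified when $n_k>0$. You plan to verify directly that $[p]_C$ is special (which you rightly flag as the main obstacle, needing a case analysis on the multiplicities among $m_1,\cdots,m_k$) and then conclude by uniqueness of the special partition with a given odd diagram. The paper instead observes that the last $m_1-2m_s$ rows of $p$ have even length and are left untouched by the $H$-algorithm of type~$C$, so the algorithm acts only on the first $2m_s$ (odd-length) rows, where its effect is precisely the $C$-collapse; this sidesteps the specialness verification entirely. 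Your route is sound but longer; the paper's fixed-row observation is the shortcut that makes the case analysis unnecessary.
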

\begin{proof}
Similarly from the Z-diagram  $P$ of type  $(n_k; n_1,\cdots,n_{k-1})$, by using H-algorithm, we can get the special partition ${\bf p}$ corresponding to $\mathcal{O}_w=\mathcal{O}_{w_0^{\mathfrak{p}}}$.
   
   When $n_k=0$, $P$ is a Young diagram with shape $p=(p_1,\cdots,p_N)$ such that $$p^t=(m_1,m_1,m_2,m_2,\cdots,m_{k-1},m_{k-1}).$$
Note that this $p$ contains only  even parts, which is already a special partition of type $C$. Thus we have ${\bf p}=[p_1,p_2,\cdots,p_N]$.

   When $n_k>0$, $P$ is a Young diagram with shape $p=(p_1,\cdots,p_N)$ such that $$p^t=(m_1,m_1,m_2,m_2,\cdots,m_{s-1},m_{s-1,},2m_s,m_{s+1},m_{s+1},\cdots,m_{k},m_{k}),$$
   where $m_s=n_k$.
 After the H-algorithm on $p$, similar to type $B$ we have ${\bf p}=[p_1,p_2,\cdots,p_N]_{_C}$ since the last $m_1-2m_s$ parts are even parts and they will not change after H-algorithm.

Combined the above two cases, we complete the proof.
    \end{proof}

\begin{theorem}\label{rich-d}
    Suppose  $\mathfrak{g}=\mathfrak{so}(2n,\mathbb{C})$. Let $\frp$ be a  standard parabolic subalgebra of type $(n_1, n_2,\cdots,n_k)$. When $n_k\neq 1$, the Richardson orbit $\mathcal{O}$ associated to $\frp $ has partition $${\bf p}=[p_1,p_2,\cdots,p_N]_{_D},$$
    where $p=(p_1,\cdots,p_N)$ is the shape of the Z-diagram  $P$ of type  $(n_k; n_1,\cdots,n_{k-1})$ so that $${p}^t=(m_1,m_1,m_2,m_2,\cdots,m_{s-1},m_{s-1,},2m_s,m_{s+1},m_{s+1},\cdots,m_{k},m_{k}).$$ Here $(m_1,\cdots,m_k)$ is the  arrangement of the sequence $(n_1,n_2,..,n_{k})$ in descending order and $m_s=n_k$.

    When $n_k=1$, the Richardson orbit $\mathcal{O}$ associated to $\frp $ has partition $$\bar{\bf p}=[p'_1,p'_2,\cdots,p'_N]_{_D},$$
    where $\bar p=(p'_1,\cdots,p'_N)$ is the shape of the Z-diagram  $\bar P$ of type  $(0; n_1,\cdots,n_{k-2},n_{k-1}+1)$ so that $${\bar p}^t=(m_1,m_1,m_2,m_2,\cdots,m_{k-1},m_{k-1}).$$ Here $(m_1,\cdots,m_{k-1})$ is the  arrangement of the sequence $(n_1,n_2,..,n_{k-2},n_{k-1}+1)$ in descending order.
\end{theorem}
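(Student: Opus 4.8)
The plan is to follow the proof of Theorem~\ref{rich-b} step by step, reading \emph{even} boxes for \emph{odd} boxes throughout, after first disposing of the case $n_k=1$. Since a standard parabolic subalgebra of $\mathfrak{so}(2n,\mathbb{C})$ of type $(n_1,\dots,n_{k-1},1)$ is isomorphic to the one of type $(n_1,\dots,n_{k-2},n_{k-1}+1,0)$, these two parabolics determine the same Richardson orbit, so the $n_k=1$ statement of the theorem follows from the $n_k=0$ case. From now on I assume $n_k\neq1$.

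By Theorem~\ref{mainD}, an integral $L(\lambda)\in\mathcal{O}^{\mathfrak{p}}$ is socular if and only if $P(\lambda^-)$ has the same even boxes as the Z-diagram $P$ of type $(n_k;n_1,\dots,n_{k-1})$, whose shape I denote $p=(p_1,\dots,p_N)$. Writing $\lambda=w\mu$ with $\mu$ anti-dominant and $w\in W_{[\lambda]}$ of minimal length, socularity is equivalent to $w$ lying in the Kazhdan--Lusztig right cell containing $w_0^{\mathfrak{p}}$ by \cite[Theorem~48]{MCb}; then the Springer correspondence gives $\mathcal{O}_w=\mathcal{O}_{w_0^{\mathfrak{p}}}$, which is the desired Richardson orbit and is special by Proposition~\ref{special}, and by \cite{BMW} it has the same even diagram as $P(\lambda^-)$, hence as $P$. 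Since the H-algorithm of type $D$ produces, out of a domino type partition, the unique special partition with the prescribed even boxes (the Remark in \S\ref{Pre} expresses this as $H({\bf p})=({\bf p}_{\bar{D}})^{\bar{D}}$), the orbit $\mathcal{O}_{w_0^{\mathfrak{p}}}$ corresponds to the partition $H(p)$. Thus the theorem reduces to the purely combinatorial identity $H(p)=[p_1,\dots,p_N]_D$, i.e. to the claim that the ordinary $D$-collapse of $p$ is already special.

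To establish this I would exploit the explicit block structure $p^t=(m_1,m_1,\dots,m_{s-1},m_{s-1},2m_s,m_{s+1},m_{s+1},\dots,m_k,m_k)$ with $m_s=n_k$, and split into $n_k=0$ (then $s=k$, the $2m_s$-column is empty, every column of $P$ is paired, and $p$ consists of even parts) and $0<n_k\neq1$. In the latter case one first checks, exactly as in the type $B$ argument, that the first $2m_s$ parts of $p$ are odd --- because the columns of length $\geq 2m_s$ other than the lone $2m_s$-column occur in pairs --- hence are frozen by the $D$-collapse, which therefore acts only on the (all-even) partition formed by the remaining parts, reducing to the $n_k=0$ situation. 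In both cases the $D$-collapse only shifts boxes between consecutive parts in a controlled way, appending at most one further part to keep the total equal to $2n$, and the decisive point is that the transpose of the resulting partition then has every odd part occurring with even multiplicity; by Proposition~\ref{bcd} this forces the collapsed partition to be a special partition of type $D$ with the same even boxes as $p$, so it equals $H(p)$, and the theorem follows. The $n_k=1$ statement is then the $n_k=0$ case applied after the isomorphism above.

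The step I expect to be the main obstacle is this last combinatorial verification: one has to confirm that the shifts forced by the $D$-collapse never push a box into the frozen $2m_s$-block, never collide with an even box in the same row, and do not trigger further collapse steps, and also that the output partition is never \emph{very even} (so that it names a single nilpotent orbit rather than a pair $\mathcal{O}^I,\mathcal{O}^{II}$), which should follow from the parity pattern of $m_1,\dots,m_k$ together with the emergence of an odd part after collapsing. The remaining ingredients --- the cell-theoretic and Springer-theoretic input, and the characterization of special partitions in Proposition~\ref{bcd} --- are exactly as in the proofs of Theorems~\ref{rich-b} and~\ref{rich-c}.
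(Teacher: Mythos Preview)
Your proposal follows essentially the same route as the paper: reduce $n_k=1$ to $n_k=0$ via the parabolic isomorphism, invoke Theorem~\ref{mainD} together with the KL-cell/Springer/\cite{BMW} input to identify the partition of the Richardson orbit with $H(p)$, and then verify $H(p)=p_D$ by splitting into $n_k=0$ (all parts of $p$ even) and $n_k>1$ (the first $2m_s$ parts of $p$ are odd and hence fixed by both the H-algorithm of type $D$ and the $D$-collapse, so the comparison reduces to the even tail). The paper's combinatorial verification of $H(p)=p_D$ is just as terse as your sketch --- it simply says ``similar to type $B$'' --- so you have correctly located where the work lies.

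One correction: your expectation that $p_D$ is never very even is both false and unnecessary. When $n_k=0$ and $p$ is already a partition of type $D$ --- for instance $p^t=(2,2,2,2)$, i.e.\ $p=(4,4)$ --- the $D$-collapse does nothing and $p_D$ is very even. The theorem as stated only identifies the \emph{partition} of the Richardson orbit, not which of $\mathcal{O}^I$, $\mathcal{O}^{II}$ it is; the paper handles the numeral separately in a remark immediately after the proof, citing \cite[Theorem~7.3.3(ii)]{CM93}. So you should drop that item from your list of obstacles rather than try to establish it.
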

\begin{proof}
Similarly from the Z-diagram  $P$ of type  $(n_k; n_1,\cdots,n_{k-1})$, by using H-algorithm, we can get the special partition ${\bf p}$ corresponding to $\mathcal{O}_w=\mathcal{O}_{w_0^{\mathfrak{p}}}$.
In this case, $\mathcal{O}_w$ has the same even diagram as the Z-diagram $P$.

 Since the case of $n_k=1$ can be reduced to the case of $n_k=0$, we will not talk about it here. 
    
    When $n_k=0$, $P$ is a Young diagram with shape $p=(p_1,\cdots,p_N)$ such that $$p^t=(m_1,m_1,m_2,m_2,\cdots,m_{k-1},m_{k-1}).$$
Note that this $p$ contains only  even parts.  After the H-algorithm on $p$, similar to type $B$ we have ${\bf p}=[p_1,p_2,\cdots,p_N]_{_D}=p_{_D}$.

   When $n_k>1$, $P$ is a Young diagram with shape $p=(p_1,\cdots,p_N)$ such that $$p^t=(m_1,m_1,m_2,m_2,\cdots,m_{s-1},m_{s-1,},2m_s,m_{s+1},m_{s+1},\cdots,m_{k},m_{k}),$$
   where $m_s=n_k$.
 After the H-algorithm on $p$, we have $${\bf p}=H(p)=[p_1,p_2,\cdots,p_N]_{_D}=p_{_D}$$ since the first $2m_s$ parts are odd parts and they will not change after the H-algorithm.

Combined the above two cases, we complete the proof.
    \end{proof}

\begin{remark}
   When the associated Richardson orbit is a very even orbit, its numeral can be determined by  \cite[Theorem 7.3.3(ii)]{CM93} since $n_k=0$ or $1$ in this case. 
\end{remark}
\begin{example}
 Let   $\mathfrak{g}=\mathfrak{so}(22,\mathbb{C})$.  Suppose $\Delta\setminus I=\{\alpha_{1},\alpha_{8}\}$. Then the corresponding   parabolic subalgebra 
  $\mathfrak{p}_I$ is a standard parabolic subalgebra of type  $(1,7,3)$. By Theorem \ref{mainD}, a simple integral highest weight  module $L(\lambda)$  in $\mathcal{O}^{\mathfrak{p}}$  is socular  if and only if $P(\lambda^-)$ has the same even even boxes as the following Z-diagram $P$ of type  $(3;1,7)$:  
 \begin{center}   
\begin{tikzpicture}[scale=\domscale-0.1,baseline=-15pt]
\hdom{0}{0}{B}
\hdom{0}{1}{B}
\hdom{0}{2}{B}
\hdom{0}{3}{B}
\hdom{0}{4}{B}
\hdom{0}{5}{B}
\hdom{0}{6}{B}
\vdom{2}{0}{A}
\vdom{2}{2}{A}
\vdom{2}{4}{A}
\hdom{3}{0}{B}
\end{tikzpicture}. 
\end{center} 

The shape of $P(\lambda^-)\ev$ is $p(\lambda^-)\ev=p\ev=(3,1,2,1,2,1,1)$, where $p$ is the shape of the Z-diagram $P$.

From this Z-diagram $P$, we can get a Richardson orbit $\mathcal{O}$ with partition ${\bf p}=[p_1,p_2,\cdots,p_N]_{_D}=[5,3^5,2]_{_D}=[5,3^5,1,1]$.

For example, suppose $\lambda=(-20,-5,-6,-7,-8,-9,-10,-11,3,2,1)\in \mathfrak{h}^*$. Then $L(\lambda)$  in $\mathcal{O}^{\mathfrak{p}}$  is socular. We can check that $P(\lambda^-)$ has the same even boxes as the  Z-diagram $P$ of type  $(3;1,7)$.
\end{example}

\subsection*{Acknowledgments}
This work was supported by National Natural Science Foundation of China  (Grant No. 12171344) and the National Key $\textrm{R}\,\&\,\textrm{D}$ Program of China (No. 2018YFA0701700 and No. 2018YFA0701701). The authors thank Volodymyr Mazorchuk for very helpful discussions about the socular highest weight modules in the case of type $A$. 
 The authors thank the referees for the careful reading and comments.

% \tytb{{\none[1]}\none O\none O\none,{\none[2]}O\none O,\none\none O,\none O\none O,\none\none O,\none O\none O,{\none[3]}\none O,{\none[4]}O}

% \tytb{{\none[1]}EOEO,{\none[2]}OEOE,\none EOE,\none OEO,\none EOE,\none OEO,{\none[3]}EO,{\none[4]}OE}

\end{document}